\newtheorem{algorithm}{Algorithm}
\newtheorem{theorem}{Theorem}
\newtheorem{lemma}{Lemma}
\newtheorem{example}[lemma]{Example}
\newtheorem{proposition}[lemma]{Proposition}
\newtheorem{remark}[lemma]{Remark}
\newtheorem{definition}[lemma]{Definition}
\def\k{\mathbb K}
\def\RR{\mathbb R}
\def\kk{\overline{\mathbb K}}
\def\mf{P}
\def\bb{b}
\def\aa{\mathbf a}
\def\hh{\mathbf h}
\def\uu{\mathbf u}
\def\ww{\mathbf w}
\def\bb{\mathbf b}
\def\wh{\widehat }
\def\rank{{\mathrm rank}}
\def\bez{{B\'ezout }}
\def\syz{\operatorname{syz}}
\newcommand{\beq}{\begin{equation}}
\newcommand{\eeq}{\end{equation}}
\newcommand\Blue{}
\newcommand\Red{}
\begin{document}

\title{Degree-optimal Moving Frames for Rational Curves
\thanks{The research was partially supported by the grant US NSF CCF-1319632.}}
\author{Hoon Hong
  \thanks{North Carolina State University (hong@ncsu.edu, zchough@ncsu.edu, iakogan@ncsu.edu).}
 \and Zachary Hough\footnotemark[2]
 \and Irina Kogan\footnotemark[2] 
 \and Zijia Li \thanks{Joanneum Research, Austria (zijia.li@joanneum.at).}
}
\maketitle

\begin{abstract}

A \emph{moving frame} at a rational curve is a basis of vectors 
moving along the  curve. When the rational curve is\ given parametrically by a row vector $\aa$ of  univariate polynomials, a  moving frame with important algebraic properties can be defined by the columns of an invertible polynomial matrix $P$, such that  $\aa P=[\gcd(\aa),0\ldots,0]$. 
A \emph{degree-optimal moving frame} has column-wise  minimal degree,  where the degree of a column is defined to be   the maximum of the degrees of its components.  Algebraic moving frames are closely related to the univariate versions of the celebrated Quillen-Suslin problem,   effective Nullstellensatz problem,  and syzygy module problem.
However, this paper appears to be the first devoted to finding an  efficient algorithm for constructing  a {degree-optimal} moving frame, a property desirable in various applications. We compare our algorithm with other possible approaches, based on already available algorithms, and show that it is more efficient. We  also establish several new theoretical results  concerning the degrees of an  optimal moving frame and its components. In addition, we show that any deterministic  algorithm for computing a degree-optimal algebraic moving frame can be augmented so that it assigns   a degree-optimal  moving frame in a $GL_n(\k)$-equivariant manner.  This crucial property of classical {geometric moving frames},  in combination with the algebraic properties, can be exploited in various problems.
\end{abstract}

\medskip
\noindent\textbf{Keywords}: rational curves, moving frames,  {\Blue Quillen-Suslin theorem}, effective univariate
Nullstellensatz, B\'ezout identity and B\'ezout vectors, syzygies, $\mu$-bases.

\noindent {\bf MSC 2010:} 12Y05, 
13P10,  
 14Q05, 
 68W30.  

\section{Introduction}


Let $\mathbb{K}[s]$ denote a  ring of {\Red univariate} polynomials over a field
$\mathbb{K}$ and let $\mathbb{K}[s]^{n}$ denote the set of row vectors of length
$n$ over $\mathbb{K}$. Let $GL_{n}(\mathbb{K}[s])$ denote the set
of invertible $n\times n$ matrices over $\mathbb{K}[s]$, or equivalently, the set of matrices whose columns are \emph{point-wise} linearly independent {\Red over the algebraic closure $\kk$.} 
 
A nonzero row vector $\mathbf{a}\in\mathbb{K}[s]^{n}$ defines a parametric curve in $\k^n$.  The columns of a matrix  $P\in   GL_{n}(\mathbb{K}[s])$  assign a basis of vectors in $\k^n$ at each point of the curve. In other words, the columns of the matrix can be viewed as a coordinate system, or a frame, that moves along the curve. To be of interest, however, such assignment should not be arbitrary, but instead be related to the curve in a meaningful way.  In this paper,  we require that  $\mathbf{a}\, P=[\gcd(\mathbf{a}),0,\dots,0]$, where  $\gcd(\aa)$ is the monic greatest common divisor of the components of  $\aa$. We will call a matrix $P$ with the above property an \emph{algebraic moving frame at $\aa$}. We  observe that  for any
nonzero monic polynomial $\lambda(s)$, a moving frame at $\mathbf{a}$ is also a moving frame at $\lambda
\mathbf{a}$. Therefore, we can obtain an equivalent construction in the projective
space $\mathbb{PK}^{n-1}$ by considering only polynomial vectors $\aa$ such that $\gcd(\aa)=1$.
Then $\mf$ can be thought of
as an element of $PGL_{n}(\mathbb{K}[s])=GL_{n}(\mathbb{K}[s])/{cI}$, where
$c\neq0\in\mathbb{K}$ and $I$ is an identity matrix. A canonical map of $\aa$ to any of the affine
subsets $\mathbb{K}^{n-1}\subset \mathbb{PK}^{n-1}$ produces a rational curve in $\k^n$, and $P$ assigns a projective moving frame at $\aa$. This paper is devoted to  \emph{degree-optimal} algebraic moving frames -- frames that column-wise have minimal degrees, where the degree of a column is defined to be   the maximum of the degrees of its components (see Definitions~\ref{def-basic} and~\ref{def-omf}).

Algebraic moving frames  appeared in a number of important proofs and constructions under a variety of names.  For example, in the constructive proofs of the celebrated  Quillen-Suslin  theorem \cite{fitchas-galligo-1990},  \cite{logar-sturmfels-1992},  \cite{caniglia-1993}, \cite{park-woodburn-1995},  \cite{lombardi-yengui-2005}, \cite{fabianska-quadrat-2007}, given a polynomial \emph{unimodular}  $m\times n$ matrix ${\bf A}$, one constructs a unimodular matrix $P$ such that ${\bf A}P=[I_m,\bf{0}]$, where $I_m$ is an $m\times m$ identity matrix. In the univariate case  with $m=1$, the matrix $P$ is an algebraic moving frame.  However,  the above works  were not concerned  with the problem of finding  $P$ of optimal degree for every input  ${\bf A}$. Under the same assumptions  on ${\bf A}$,  a \emph{minimal multiplier}, defined in Section  3 of \cite{beckermann-labahn-villard-2006}, is a {degree-optimal} algebraic moving frame. However, the paper \cite{beckermann-labahn-villard-2006} was not concerned with constructing minimal multipliers, and a direct algorithm for computing them was not introduced. 
In Section~\ref{sect-compare}, we discuss a two-step approach, consisting of constructing a non-optimal moving frame and then performing a degree-reduction procedure. We show that it is less efficient than  the direct approach developed in the current paper.
An alternative direct approach for computing degree-optimal moving frames is in the dissertation of the second author (\cite{hough-2018}, Sections 5.9 and 5.10)  This approach is based on computing the term-over-position (TOP) Gr\"obner basis of a certain module over $\k[s]$, and when standard  TOP Gr\"obner basis algorithms for modules are employed, it is less efficient than the algorithm in the current paper. Optimizations, based on the structure of the particular problem, are possible and are the subject of a forthcoming publication.


A very important area of applications where, according to our preliminary studies,   utilization of degree-optimal moving frames is beneficial,   is  the control theory.  In particular,  the use of degree-optimal frames can lower differential  degrees of ``flat outputs'' (see, for instance,  Polderman and Willems \cite{PW-98}, Martin, Murray and Rouchon \cite{martin-murray-rouchon-2001},  Fabia\'nska and Quadrat \cite{fabianska-quadrat-2007}, Antritter and  Levine \cite{antritter-levine-2010}, Imae, Akasawa, and Kobayashi \cite{imae-akasawa-kobayashi-2015}). Another interesting application of algebraic frames can be found in the paper \cite{EGB} by  Elkadi, Galligo and  Ba, devoted  to the following problem: given a vector of polynomials with gcd 1, find small degree perturbations so that the perturbed polynomials have a large-degree gcd. As discussed in Example~3 of  \cite{EGB}, the perturbations produced by the algorithm presented in this paper do not always have minimal degrees. It would be worthwhile to study if the usage of degree-optimal moving frames  can decrease the degrees of the perturbations. 

 Obviously,  the first column of an algebraic  moving frame $P$ at $\aa$ is a  \emph{B\'ezout vector} of $\mathbf{a}$; that is, a vector comprised of the coefficients appearing in the output of the extended Euclidean algorithm.  In Proposition~\ref{prop-mfbasis}, we prove that  the last $n-1$ columns of $\mf$ comprise  a point-wise linearly independent  basis of the syzygy module of $\mathbf{a}$.
  In Theorem~\ref{thm-bbomf}, we show  that a matrix $\mf$ is a degree-optimal moving frame  at $\aa$  if and only if the first column of $\mf$ is   a B\'ezout vector
of $\mathbf{a}$ of \emph{minimal degree}, and the last $n-1$
columns form a basis of the syzygy module of $\mathbf{a}$ of \emph{optimal
degree}, called a $\mu$-basis \cite{cox-sederberg-chen-1998}. The concept of $\mu$-bases, along with several related concepts such as moving lines and moving curves, have a long history of applications in geometric modeling, originating with works by Sederberg and Chen \cite{sederberg-chen-1995},   Cox,  Sederberg and Chen \cite{cox-sederberg-chen-1998}. Further development of this topic appeared  in  \cite{chen-cox-liu-2005, song-goldman-2009, jia-goldman-2009,tesemma-wang-2014}.

One may attempt to construct an optimal moving frame by putting together a minimal-degree \bez vector and a $\mu$-basis.   {\Blue Indeed,  algorithms for computing $\mu$-bases are well-developed. The most straightforward (and computationally inefficient) approach consists of computing the reduced Gr\"obner basis of the syzygy module with respect to a term-over-position monomial ordering. More efficient algorithms have been developed  by  Cox, Sederberg, and Chen\cite{cox-sederberg-chen-1998},  Zheng and Sederberg \cite{zheng-sederberg-2001}, Chen and Wang  \cite{chen-wang-2002} for the $n=3$ case, and  by Song and Goldman  \cite{song-goldman-2009} and Hong, Hough and Kogan \cite{hhk2017} for arbitrary $n$.  The problem of computing a $\mu$-basis also can be viewed as a particular case of the problem of computing optimal-degree kernels of $m\times n$ polynomial matrices of rank $m$ (see for instance Beelen \cite{beelen1987},   Antoniou,  Vardulakis, and Vologiannidis \cite{antoniou2005},  Zhou, Labahn, and Storjohann \cite{zhou-2012} and references therein).
On the contrary,  our literature search did not yield any articles devoted to the problem of finding an efficient algorithm for computing a minimal-degree \bez vector. Of course, one can compute such a vector by a brute-force  method, namely by searching for a \bez vector of        
a fixed degree, starting  from degree zero, increasing the degree by one, and terminating the search once a \bez vector is  found, but this procedure is  very inefficient.

Alternatively, one can first construct a non-optimal moving frame by algorithms using, for instance, a generalized version of Euclid's extended gcd algorithm, as described by Polderman and Willems in \cite{PW-98}, or various algorithms presented in the literature devoted to the constructive Quillen-Suslin theorem and the related problem of unimodular completion: Fitchas and Galligo \cite{fitchas-galligo-1990}, Logar and Sturmfels \cite{logar-sturmfels-1992}, Caniglia,  Corti\~nas, Dan\'on,  Heintz, Krick, and  Solern\'o \cite{caniglia-1993}, Park and Woodburn \cite{park-woodburn-1995}, Lombardi and Yengui \cite{lombardi-yengui-2005}, Fabia\'nska and Quadrat \cite{fabianska-quadrat-2007},  Zhou-Labahn
  \cite{zhou-labahn-2014}.  Then a degree-reduction procedure can be performed, for instance, by computing the Popov normal form of the last $n-1$ columns of a non-optimal moving frame, as discussed in \cite{beckermann-labahn-villard-2006}, and then reducing the degree of its first column.  We discuss this approach in Section~\ref{sect-compare}, and demonstrate that it is less efficient  than the direct algorithm presented here.  

The advantage of the algorithm presented here is that it simultaneously constructs a minimal-degree \bez vector and a $\mu$-basis.}  Theorem~\ref{thm-bez},  proved in this paper, is crucial for our algorithm, because it shows how a minimal-degree \bez vector can be read off a   Sylvester-type matrix  associated with $\aa$,  the same matrix that has been used in  \cite{hhk2017} for computing a $\mu$-basis. 
 This theorem leads to an algorithm consisting of the following three steps: (1)
 build a  Sylvester-type  $(2d+1)\times(nd+n)$ matrix $A$, associated with $\aa$, where $d$ is the maximal degree of the components of the  vector $\aa$,  and  append an additional column to $A$;  (2) run a single partial row-echelon reduction of the resulting  $(2d+1)\times(nd+n+1)$ matrix; (3) read off an optimal moving frame from appropriate columns of the partial reduced row-echelon form. We implemented the algorithm  in the computer algebra system Maple. The codes and examples are available on the web:
\url{http://www.math.ncsu.edu/~zchough/frame.html}. The algorithm presented here has a natural generalization to unimodular matrix inputs $\bf A$. In the matrix case, partial row echelon reduction is performed on the matrix obtained by stacking together Sylvester-type matrices corresponding to each row of $\bf A$. The details will appear in the dissertation \cite{hough-2018} of the second author.


Along with  the developing a new algorithm for computing an optimal moving frame, we prove new results about the degrees of optimal moving frames and its building blocks.  These degrees play an important role in the classification of rational curves, because although a degree-optimal moving frame is not unique,  its columns have canonical degrees.  The list of degrees of the last $n-1$ columns ($\mu$-basis columns)  is called  the $\mu$-type of an input polynomial vector,  and $\mu$-strata analysis  was performed in D'Andrea \cite{andrea-2004}, Cox and Iarrobino \cite{CI-2015}.  In Theorem~\ref{bez-mu}, we  show  that the degree of the first column (\bez vector) is bounded by the maximal degree of the other columns, while Proposition~\ref{mu-type} shows that this is the only restriction that  the $\mu$-type imposes on  the degree of a minimal  \bez vector. Thus, one can  refine the $\mu$-strata analysis to the $(\beta,\mu)$-strata analysis, where $\beta$  denotes the degree of a minimal-degree \bez vector. This  work can have potential applications  to rational curve classification problems.  In Proposition~\ref{deg} and Theorem~\ref{generic-deg}, we establish sharp lower and upper bounds for the degree of an optimal moving frame and show that  for a generic vector $\aa$, the degree of an optimal moving frame equals to the sharp  lower bound. 

The majority of frames  in differential geometry have a group-equivariance property.  For a curve in the three dimensional space, the Frenet frame is a classical example of a Euclidean group-equivariant frame. However, alternative geometric frames, in particular rotation minimizing frames, appear in applications in computer aided  geometric design, geometric modeling, and computer graphics (see, for  instance, \cite{guggenheimer-1989}, \cite{wang-et-al-2008}, \cite{farouki-et-al-2014},  \cite {farouki-2016}  and references therein). A method for deriving equivariant moving frames  for higher-dimensional objects  and for non-Euclidean geometries has been developed by Cartan (such as in \cite{C35}), who used moving frames  to solve various group-equivalence problems (see \cite{Gug63}, \cite{ivey-landsberg-2016}, \cite{clelland-2017} for modern introduction into Cartan's approach).  The moving frame method was further developed and generalized  by Griffiths \cite{Gr74}, Green \cite{Green78}, Fels and Olver \cite{FO99}, and many others.   Group-equivariant moving frames have  a wide range of applications  to problems in  mathematics, science, and engineering (see  \cite{olver-150} for an overview).
    In Section~\ref{sect-geom-mf},we show that a simple modification of any deterministic algorithm for producing a degree-optimal algebraic moving frame leads to an algorithm that produces a  $GL_n(\k)$-equivariant   degree-optimal moving frame.
This opens the possibility of exploiting   a combination of  important geometric and algebraic  properties to address  equivalence and symmetry problems.  
 
\emph{We  now summarize  each of the following sections emphasizing  the new results  therein contained.} In Section~\ref{sect-mf-bv-syz}, we give precise definitions of a  degree-optimal moving frame, a minimal-degree B\'ezout  vector, and a $\mu$-basis. We show the relationships between these objects. In particular, Theorem~\ref{thm-bbomf}  states  that a minimal-degree B\'ezout  vector and a $\mu$-basis are the building blocks of any degree-optimal moving frame.
This result, although essential to our study,   is by no means  surprising and is easily deducible from known results. Theorem \ref{bez-mu}  and Proposition \ref{mu-type} establish important  relationships
between the degrees of a $\mu$-basis  and the degree of a minimal \bez vector.  
 In Section~\ref{sect-la}, by introducing a modified Sylvester-type matrix $A$, associated with an input vector $\aa$, we reduce the problem of constructing a degree-optimal moving frame to a linear algebra problem over $\k$. Theorems~\ref{thm-bez} and~\ref{thm-mu} show how a minimal-degree  B\'ezout  vector and a $\mu$-basis, respectively, can be constructed from the matrix $A$.  Theorem~\ref{thm-bez} is new, while Theorem~\ref{thm-mu} is a slight modification of Theorem~27 in
\cite{hhk2017}.
In Section~\ref{sect-degree}, we prove new results about the degree of an optimal moving frame. In particular, in Proposition~\ref{deg}, we establish  the sharp lower  bound $\left\lceil\frac{ d}{n-1}\right\rceil$ and the sharp upper bound $d$ for the degree of an optimal moving frame, and in Theorem~\ref{generic-deg}, we prove  that for a generic vector $\aa$, the degree of every degree-optimal moving frame at $\aa$ equals to the sharp lower bound.
In Section~\ref{sect-alg}, we present a degree-optimal moving frame (OMF) algorithm.  The algorithm  exploits the fact that the construction procedures for a minimal-degree  B\'ezout  vector and for a $\mu$-basis, suggested by Theorems~\ref{thm-bez} and~\ref{thm-mu}, can be accomplished simultaneously by a single  partial row-echelon reduction   of  a  $(2d+1)\times(nd+n+1)$ matrix over $\k$. In Proposition~\ref{prop-complexity}, we prove that the theoretical (worst-case asymptotic) complexity of the OMF algorithm equals to $O(d^{2}n + d^{3} + n^{2})$, and we trace the algorithm on our running example.
 In Section~\ref{sect-compare}, we compare our algorithm with other possible approaches.
 In Section~\ref{sect-geom-mf}, we show that  important algebraic properties of the frames produced by the OMF algorithm can be enhanced by a group-equivariant property which plays a crucial role in geometric  moving frame theory.


\section{Moving frames, {B\'ezout} vectors, and syzygies}

\label{sect-mf-bv-syz} In this section, we give the definitions of moving
frame and degree-optimal moving frame,  and explore the relationships between moving frames, syzygies,
and {B\'ezout} vectors.


\subsection{Basis definitions and notation}

\label{ssect-def} Throughout the paper, $\mathbb{K}$ is an arbitrary field,
$\overline{\mathbb{K}}$ is its algebraic closure, and $\mathbb{K}[s]$ is the
ring of {\Red univariate} polynomials over $\mathbb{K}$.  For arbitrary natural numbers $t$ and
$m$, by $\mathbb{K}[s]^{t\times m}$ we denote the set of $t\times m$ matrices
with polynomial entries.  The set of $n\times n$ invertible matrices over
$\mathbb{K}[s]$ is denoted as $GL_{n}(\mathbb{K}[s])$. It is well-known and
easy to show that the determinant of such matrices is a nonzero element of
$\mathbb{K}$.
For a matrix $N$, we will use notation $N_{*i}$ to denote its $i$-th column.
For a square matrix, $|N|$ denotes its determinant.

By $\mathbb{K}[s]^{m}$ we denote the set of vectors of length $m$ with
polynomial entries. All vectors are implicitly assumed to be \emph{column
vectors}, unless specifically stated otherwise. Superscript $\phantom{.}^{T}$
denotes transposition. We will use the following definitions of the degree and
leading vector of a polynomial vector:

\begin{definition}
[Degree and Leading Vector]\label{def-basic} For $\mathbf{h}=[h_{1}%
,\dots,h_{m}]\in\mathbb{K}[s]^{m}$ we define the \emph{degree} and the
\emph{leading vector} of $\hh$ as follows:

\begin{itemize}
\item $\displaystyle{\deg(\mathbf{h})=\max_{i=1,\dots,m}\deg(h_{i})}$.

\item $LV(\mathbf{h})=[\mathrm{coeff}(h_{1},t),\dots,\mathrm{coeff}%
(h_{m},t)]^{T}\in\mathbb{K}^{n}$, where $t=\deg(\mathbf{h})$ and
$\mathrm{coeff}(h_{i},t)$ denotes the coefficient of $s^{t}$ in $h_{i}$.

\item We will say that a set of polynomial vectors $\mathbf{h}_{1}%
,\dots,\mathbf{h}_{k}$ is \emph{degree-ordered} if $\deg(\mathbf{h}_{1}%
)\leq\dotsm\leq\deg(\mathbf{h}_{k})$
\end{itemize}
\end{definition}

\begin{example}
Let $\mathbf{h}=\left[
\begin{array}
[c]{c}%
9-12s-s^2\\
8+15s\\
-7-5s+s^2
\end{array}
\right]  $. Then $\deg(\mathbf{h})=2$ and $LV(\mathbf{h})=\left[
\begin{array}
[c]{r}%
-1\\
0\\
1
\end{array}
\right]  .$
\end{example}

By $\mathbb{K}[s]^{m}_{t}$ we denote the set of vectors of length $m$ of
degree at most $t$. 

\emph{Throughout the paper, $\mathbf{a}\in\mathbb{K}[s]^{n}$ is
assumed to be a nonzero row vector with $n>1$.}


\subsection{Algebraic moving frames and degree optimality}

\label{ssect-mf}

\begin{definition}
[Algebraic Moving Frame]\label{def-mf} For a given nonzero \emph{row} vector
$\mathbf{a}\in\mathbb{K}[s]^{n}$, with $n>1$, an \emph{(algebraic) moving frame at
$\mathbf{a}$} is a matrix $P\in GL_{n}(\mathbb{K}[s])$, such that
\begin{equation}
\label{eq-mf}\mathbf{a}\, P=[\gcd(\mathbf{a}),0,\dots,0],
\end{equation}
where $\gcd(\mathbf{a})$ denotes the greatest monic common devisor of
$\mathbf{a}$.
\end{definition}

{\Red We clarify that by a zero polynomial we mean a polynomial with all its coefficients equal to zero (recall that when $\k$ is a finite field, there may exist a polynomial with nonzero coefficients, which nonetheless is a zero function on $\k$).} As we will show below, a moving frame at $\mathbf{a}$ always exists and is
not unique. For instance, if $P$ is a moving frame at $\mathbf{a}$, then a
matrix obtained from $P$ by permuting the last $n-1$ columns of $P$ is also a
moving frame at $\mathbf{a}$.  The set of all moving frames at $\mathbf{a}$
will be denoted ${\operatorname{mf}}(\mathbf{a})$.  
We are interested in
constructing a moving frame of optimal degree.

\begin{definition}
[Degree-Optimal Algebraic Moving Frame]\label{def-omf}\emph{A moving frame} $P$ at $\mathbf{a}$
is called \emph{degree-optimal} if 

\begin{enumerate}

\item $\deg(P_{*2})\leq\cdots\leq\deg(P_{*n})$, 

\item if $P^{\prime}$ is another moving frame at $\mathbf{a}$, such that
$\deg(P^{\prime}_{*2})\leq\cdots\leq\deg(P^{\prime}_{*n})$, then
\[
\deg(P_{*i})\leq\deg(P^{\prime}_{*i})\quad\text{ for }\quad i=1,\dots, n.
\]

\end{enumerate}
\Red{In other words, we require that the last $n-1$ columns of $P$  (which are interchangeable) are  degree-ordered, and that \emph{all} columns of $P$ are degree-optimal.}
\end{definition}

For simplicity, we will often use the term \emph{optimal moving frame} or  \emph{degree-optimal frame} instead of  \emph{degree-optimal algebraic moving frame}. A degree-optimal moving frame also is not unique, but it is clear from the definition that all optimal moving frames at $\aa$ have the same column-wise degrees. 




\begin{example}
[Running Example]\label{ex-prob} We will show  that 
\small{$P=\left[
\begin{array}
[c]{ccc}%
2-s & 3-3\,s-{s}^{2} & 9-12\,s-{s}^{2}\\
1+2\,s & 2+5\,s+{s}^{2} & 8+15\,s\\
-1-s & -2-2\,s & -7-5\,s+{s}^{2}%
\end{array}
\right] $} is an optimal-degree frame at $\aa=\left[2+s+s^{4} \quad  3+s^{2}+s^{4} \quad 6+2s^3+s^{4}\right]$.
\end{example}

One can immediately notice that the moving frame is closely related to the
{B\'ezout} identity and to syzygies of $\mathbf{a}$. We explore and exploit
this relationship in the following  subsections.


\subsection{{B\'ezout} vectors}


\begin{definition}
[{B\'ezout} Vector]\label{def-bez}\textrm{A \emph{{B\'ezout} vector} of a
row vector $\mathbf{a}\in\mathbb{K}[s]^{n}$ is a column vector
$\mathbf{h}=[h_{1},\dots, h_{n}]^{T}\in\mathbb{K}[s]^{n}$, such that
\[
\mathbf{a }\, \mathbf{h}=\gcd(\mathbf{a}).
\]
}
\end{definition}

The set of all {B\'ezout} vectors of $\mathbf{a}$ is denoted by
$\operatorname{Bez}(\mathbf{a})$ and the set of {B\'ezout} vectors of degree
at most $d$ is denoted $\operatorname{Bez}_{d}(\mathbf{a})$.


\begin{definition}
[Minimal {{B\'ezout} Vector}]\label{def-bezm} A \emph{{B\'ezout} vector}
$\mathbf{h}$ of $\mathbf{a}=[a_{1},\dots, a_{n}]\in\mathbb{K}[s]^{n}$ is said
to be \emph{of minimal degree} if
\[
\deg(\mathbf{h})=\min_{\mathbf{h}^{\prime}\in\operatorname{Bez}(\mathbf{a})}
\deg(\hh^{\prime}).
\]

\end{definition}

The existence of a {B\'ezout} vector can be proven using the extended Euclidean
algorithm. Moreover, since the set of the degrees of all {B\'ezout} vectors
is well-ordered, there is a minimal-degree {B\'ezout} vector.  It is clear that the first column of a moving frame $P$ at $\mathbf{a}$ is a \bez vector of $\aa$, and therefore, in this paper,
we provide, in particular, a simple linear algebra algorithm to construct a {B\'ezout} vector
of minimal degree.

\subsection{Syzygies and $\mu$-bases}


\begin{definition}
[Syzygy]\label{def-syz} A \emph{syzygy} of a nonzero row vector
$\mathbf{a}=[a_{1},\dots, a_{n}]\in\mathbb{K}[s]^{n}$, for $n>1$, is a column
vector $\mathbf{h}\in\mathbb{K}[s]^{n}$, such that
\[
\mathbf{a }\, \mathbf{h}=0.
\]

\end{definition}

The set of all syzygies of $\mathbf{a}$ is denoted by $\operatorname{syz}%
(\mathbf{a})$, and the set of syzygies of degree at most $d$ is denoted
$\operatorname{syz}_{d}(\mathbf{a})$. It is easy to see that
$\operatorname{syz}(\mathbf{a})$ is a module. The next proposition shows that
the last $n-1$ columns of a moving frame form a basis of $\operatorname{syz}%
(\mathbf{a})$.

\begin{proposition}
[Basis of Syzygies]\label{prop-mfbasis} Let $P\in{\operatorname{mf}%
}(\mathbf{a})$. Then the columns $P_{*2},\dots, P_{*n}$ form a basis of
$\operatorname{syz}(\mathbf{a})$.
\end{proposition}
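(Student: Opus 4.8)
The plan is to show two things: first, that each $P_{*i}$ for $i=2,\dots,n$ is a syzygy of $\aa$; and second, that they generate $\syz(\aa)$ as a $\k[s]$-module, with linear independence essentially automatic since $P$ is invertible.

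First I would verify membership. By the defining relation $\aa P = [\gcd(\aa),0,\dots,0]$, reading off the $i$-th entry for $i\geq 2$ gives $\aa\, P_{*i} = 0$, so $P_{*i}\in\syz(\aa)$. This is immediate and requires no work.

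Next, I would prove spanning. Let $\hh\in\syz(\aa)$, so $\aa\hh=0$. Since $P\in GL_n(\k[s])$, its inverse $P^{-1}$ has polynomial entries, so I can write $\hh = P(P^{-1}\hh)$; set $\uu = P^{-1}\hh = [u_1,\dots,u_n]^T\in\k[s]^n$. Then $\hh = \sum_{j=1}^n u_j P_{*j}$, and applying $\aa$ gives $0 = \aa\hh = \sum_j u_j (\aa P_{*j}) = u_1\gcd(\aa)$, since all other terms vanish. Because $\gcd(\aa)\neq 0$ and $\k[s]$ is a domain, $u_1=0$. Hence $\hh = \sum_{j=2}^n u_j P_{*j}$, which shows $P_{*2},\dots,P_{*n}$ generate $\syz(\aa)$. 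For linear independence: if $\sum_{j=2}^n v_j P_{*j}=0$ with $v_j\in\k[s]$, then setting $\ww=[0,v_2,\dots,v_n]^T$ we have $P\ww=0$, and multiplying by $P^{-1}$ forces $\ww=0$, so all $v_j=0$. Thus $P_{*2},\dots,P_{*n}$ form a basis of $\syz(\aa)$.

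I do not anticipate a genuine obstacle here; the statement is a direct consequence of $P$ being invertible over $\k[s]$ together with the defining equation of a moving frame. The only point requiring a moment's care is invoking that $P^{-1}$ has polynomial entries — this follows from $\det P$ being a nonzero constant in $\k$, which is noted in the text right after $GL_n(\k[s])$ is introduced. Everything else is a routine computation with the relation $\aa P=[\gcd(\aa),0,\dots,0]$.
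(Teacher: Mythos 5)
Your proof is correct and follows essentially the same route as the paper's: decompose $\hh = P(P^{-1}\hh)$, show the first coordinate of $P^{-1}\hh$ vanishes, and get independence from the invertibility of $P$. The only cosmetic difference is that you deduce $u_1=0$ by applying $\aa$ to the expansion and using that $\k[s]$ is a domain, whereas the paper identifies the first row of $P^{-1}$ as $\aa/\gcd(\aa)$ directly; these are equivalent.
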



\begin{proof}
We need to show that $P_{*2},\dots, P_{*n}$ generate $\operatorname{syz}%
(\mathbf{a})$ and that they are linearly independent over $\mathbb{K}[s]$.

\begin{enumerate}
\item From \eqref{eq-mf}, it follows that $\mathbf{a}\,P_{*2}=\cdots
=\mathbf{a}\,P_{*n}=0$. Therefore, $P_{*2},\dots, P_{*n}\in\operatorname{syz}%
(\mathbf{a})$. It remains to show that an arbitrary $\mathbf{h}\in
\operatorname{syz}(\mathbf{a})$ can be expressed as a linear combination of
$P_{*2},\dots, P_{*n}\in\operatorname{syz}(\mathbf{a})$ over $\mathbb{K}[s]$.
Trivially we have
\begin{equation}
\mathbf{h}=P(P^{-1}\hh). \label{trivial}%
\end{equation}
From \eqref{eq-mf}, it follows that 
$\mathbf{a}=
\left[
\begin{array}[c]{cccc}%
\gcd(\mathbf{a}) & 0 & \cdots & 0
\end{array}
\right] 
P^{-1}$ and, 
therefore, the first row of $P^{-1}$ is the vector
$\tilde{\mathbf{a}}=\mathbf{a}/\gcd(\mathbf{a})$. 

Hence, since
$\mathbf{a}\,\mathbf{h}=0$, then $P^{-1}\mathbf{h}=[0,g_{2}(s),\dots
,g_{n}(s)]^{T}$ for some $g_{i}(s)\in\mathbb{K}[s]$, $i=2,\dots,n$. Then
(\ref{trivial}) implies:
\[
\mathbf{h}=\sum_{i=2}^{n}\,g_{i}P_{*i}.
\]
Thus $P_{*2},\dots, P_{*n}$ generate $\operatorname{syz}(\mathbf{a})$.

\item Let $f_{2},\ldots,f_{n}\in\mathbb{K}[s]$ be such that
\begin{equation}
\label{eq-dep}f_{2}\, P_{*2}+\cdots+f_{n}\,P_{*n}=0.
\end{equation}
Then 
\(P \tilde f = 0,  \text{ where } \tilde f = [0, f_2, . . . , f_n]^T,\)
and, since $P$ is invertible, it follows that $f_{2}=\cdots=f_{n}=0$. 
\end{enumerate}
\end{proof}


\begin{remark}
\textrm{Note that the proof of Proposition~\ref{prop-mfbasis} is valid over
the ring of polynomials in several variables. {\Red Thus, if a moving frame exists  in  the multivariable case, it follows that its last $n-1$ columns comprise  a basis
of $\syz(\aa)$.} 
 It is well-known that in the
multivariable case there exists $\mathbf{a}$ for which $\operatorname{syz}%
(\mathbf{a})$ is not free and then, from Proposition~\ref{prop-mfbasis}, it
immediately follows that a moving frame at $\mathbf{a}$ does not exist. }
\end{remark}


In the univariate case, both the existence of an algebraic moving frames and freeness of the syzygy module are well-known.
We do not, however, use these results, but as  a by-product of developing an algorithm for constructing an optimal-degree moving frame, we produce a self-contained elementary linear algebra proof of their existence.


\begin{definition}
[$\mu$-basis]\label{def-mb} For a nonzero row vector $\aa\in\mathbb{K}[s]^{n}$,
a set of $n-1$ polynomial vectors $\mathbf{u}_{1},\dots, \mathbf{u}_{n-1 }%
\in\mathbb{K}[s]^{n}$ is called a \emph{$\mu$-basis of $\mathbf{a}$}, or,
equivalently, a \emph{$\mu$-basis of $\operatorname{syz}(\mathbf{a})$}, if the
following two properties hold:

\begin{enumerate}
\item $LV(\mathbf{u}_{1}),\dots,LV(\mathbf{u}_{n-1})$ are linearly independent over
$\mathbb{K}$;

\item $\mathbf{u}_{1},\dots, \mathbf{u}_{n-1 }$ generate $\operatorname{syz}%
(\mathbf{a})$, the syzygy module of $\mathbf{a}$.
\end{enumerate}
\end{definition}

A $\mu$-basis is, indeed, a \emph{basis} of $\operatorname{syz}(\mathbf{a})$
as justified by the following:

\begin{lemma}
\label{lem-basis} Let polynomial vectors $\mathbf{h}_{1},\dots, \mathbf{h}%
_{l}\in\mathbb{K}[s]^{n}$ be such that $LV(h_{1}),\dots,LV( h_{l})$ are linearly
independent over $\mathbb{K}$. Then $\mathbf{h}_{1},\dots, \mathbf{h}_{l}$ are linearly
independent over $\mathbb{K}[s]$.
\end{lemma}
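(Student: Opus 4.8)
The plan is to argue by contradiction, extracting from a hypothetical polynomial dependence a linear dependence among the leading vectors over $\mathbb{K}$. Suppose $\mathbf{h}_1,\dots,\mathbf{h}_l$ are linearly \emph{dependent} over $\mathbb{K}[s]$, so there exist polynomials $f_1,\dots,f_l\in\mathbb{K}[s]$, not all zero, with $f_1\mathbf{h}_1+\cdots+f_l\mathbf{h}_l=0$. First I would discard the indices with $f_i=0$ and rename, so that we may assume every $f_i\neq 0$; set $e_i=\deg(f_i)+\deg(\mathbf{h}_i)$ and let $e=\max_i e_i$. The key observation is that the coefficient of $s^{e}$ in the vector identity $\sum_i f_i\mathbf{h}_i=0$ is exactly $\sum_{i:\,e_i=e}\mathrm{lc}(f_i)\,LV(\mathbf{h}_i)$, where $\mathrm{lc}(f_i)\in\mathbb{K}$ is the leading coefficient of $f_i$ (the terms with $e_i<e$ contribute nothing to degree $e$, and $\mathrm{coeff}(h_{ij},\deg(\mathbf h_i))$ assembled over $j$ is $LV(\mathbf h_i)$).

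The next step is to observe that the index set $\{i: e_i=e\}$ is nonempty, since $e$ is attained. Hence $\sum_{i:\,e_i=e}\mathrm{lc}(f_i)\,LV(\mathbf{h}_i)=0$ is a $\mathbb{K}$-linear relation among a nonempty subset of $LV(\mathbf{h}_1),\dots,LV(\mathbf{h}_l)$ with coefficients $\mathrm{lc}(f_i)\neq 0$. This contradicts the hypothesis that $LV(\mathbf{h}_1),\dots,LV(\mathbf{h}_l)$ are linearly independent over $\mathbb{K}$. Therefore no such dependence exists, and $\mathbf{h}_1,\dots,\mathbf{h}_l$ are linearly independent over $\mathbb{K}[s]$.

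I do not anticipate a genuine obstacle here; the lemma is a standard ``leading-term'' argument. The only point requiring a little care is bookkeeping the degrees: one must be sure that $\deg(f_i\mathbf h_i)=\deg(f_i)+\deg(\mathbf h_i)$ (which holds because $f_i$ is a scalar polynomial and multiplication by $f_i$ scales each component's degree by $\deg(f_i)$, so the maximum over components is preserved), and that cancellation across different $i$'s can only occur \emph{within} the top-degree block $\{i:e_i=e\}$, never eliminating that block entirely since its coefficient vector is a nontrivial $\mathbb K$-combination of linearly independent vectors. One should also note the argument uses nothing about $\mathbb K$ beyond its being a field, and in particular is insensitive to the finite-field subtlety flagged earlier in the paper, since here ``zero polynomial'' genuinely means all coefficients vanish.
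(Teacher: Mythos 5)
Your proof is correct and follows essentially the same route as the paper's: assume a nontrivial polynomial dependence, look at the top-degree coefficient, and extract a nontrivial $\mathbb{K}$-linear relation among the leading vectors $LV(\mathbf{h}_i)$ for the indices attaining the maximal degree, contradicting their independence. The extra bookkeeping you supply (discarding zero $f_i$'s and justifying $\deg(f_i\mathbf{h}_i)=\deg(f_i)+\deg(\mathbf{h}_i)$) is implicit in the paper's version but changes nothing of substance.
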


\begin{proof}
Assume that $\mathbf{h}_{1},\dots,\mathbf{h}_{l}$ are linearly
\emph{dependent} over $\mathbb{K}[s]$, i.e.~there exist polynomials
$f_{1},\dots,f_{l}\in\mathbb{K}[s]$, not all zero, such that
\begin{equation}
\sum_{i=1}^{l}f_{i}\,\mathbf{h}_{i}=0.\label{eq-zero}%
\end{equation}
Let $t=\displaystyle{\max_{i=1,\dots,l}}\, \deg(f_{i}\,\mathbf{h}_{i}) $ and
let $\mathcal{I}$ be the set of indices on which this maximum is achieved.
Then \eqref{eq-zero} implies
\[
\sum_{i\in\mathcal{I}}LC(f_{i})\,LV(\mathbf{h}_{i})=0,
\]
where $LC(f_{i})$ is the leading coefficient of $f_{i}$ and is nonzero for
$i\in\mathcal{I}$. This identity contradicts our assumption that
$LV(\mathbf{h}_{1}),\dots,LV(\mathbf{h}_{l})$ are linearly independent over
$\mathbb{K}$.
\end{proof}

In \cite{cox-sederberg-chen-1998}, Hilbert polynomials and the Hilbert Syzygy Theorem were used to show the existence of a basis of $\operatorname{syz}(\aa)$ with
especially nice properties, called a $\mu$-basis. An alternative proof of the
existence of a $\mu$-basis based on elementary linear algebra was given in
\cite{hhk2017}.

In Propositions~\ref{prop-equiv-def} below, we list some 
properties of $\mu$-bases, which are equivalent to its definition. The proof can be easily  adapted from  Theorems~1 and~2 in
\cite{song-goldman-2009} and is omitted here. Only the properties used in the current paper are listed. For a more
comprehensive list of properties of a $\mu$-basis
see 
\cite{song-goldman-2009}.

\begin{proposition}
[Equivalent properties]\label{prop-equiv-def} Let $\mathbf{u}_{1},\dots,
\mathbf{u}_{n-1}$ be a degree-ordered basis of $\operatorname{syz}(\mathbf{a})$,
i.e. $\deg(\mathbf{u}_{1})\leq\cdots\leq\deg(\mathbf{u}_{n-1})$. Then the
following statements are equivalent: 

\begin{enumerate}

\item \label{pr-LV} \emph{[independence of the leading vectors]}
$\mathbf{u}_{1},\dots, \mathbf{u}_{n-1}$ is a $\mu$-basis.

\item \label{pr-reduced}\emph{[reduced representation]} For every
$\mathbf{h}\in\operatorname{syz}(\aa)$, there exist  polynomials $f_{1},\dots, f_{n-1}%
$ such that $\deg(f_{i}\,\mathbf{u}_{i}) \leq \deg(\mathbf{h})$
and
\begin{equation}
\label{eq-reduced}\mathbf{h}=\sum_{i=1}^{n-1} f_{i}\,\mathbf{u}_{i}.
\end{equation}

\item \label{pr-min} \emph{[optimality of the degrees]} If $\mathbf{h}_{1},
\dots, \mathbf{h}_{n-1}$ is another basis of $\operatorname{syz}(\mathbf{a})$,
such that $\deg(\mathbf{h}_{1})\leq\dots\leq\deg(\mathbf{h}_{n-1})$, then
$\deg(\mathbf{u}_{i})\leq\deg(\mathbf{h}_{i})$ for $i=1,\dots, n-1$.
\end{enumerate}
\end{proposition}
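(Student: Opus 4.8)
The strategy is a cyclic chain of implications: \ref{pr-LV} $\Rightarrow$ \ref{pr-reduced} $\Rightarrow$ \ref{pr-min} $\Rightarrow$ \ref{pr-LV}. The first implication is where all the work lies; the other two are short.

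For \ref{pr-LV} $\Rightarrow$ \ref{pr-reduced}, I would argue by a "division with remainder" / descent on degree. Given $\mathbf{h}\in\syz(\aa)$, since the $\mathbf{u}_i$ generate, write $\mathbf{h}=\sum_i f_i\mathbf{u}_i$ for \emph{some} polynomials $f_i$, not necessarily satisfying the degree bound. Among all such representations, pick one minimizing $t=\max_i\deg(f_i\mathbf{u}_i)$. Suppose for contradiction $t>\deg(\mathbf{h})$. Let $\mathcal{I}$ be the set of indices achieving this maximum; then $\sum_{i\in\mathcal{I}}LC(f_i)\,LV(\mathbf{u}_i)=0$, contradicting the linear independence of the leading vectors (property~1). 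Hence $t\le\deg(\mathbf{h})$, which is exactly \eqref{eq-reduced} with the stated degree bounds. This is essentially the same cancellation-of-leading-terms argument already used in the proof of Lemma~\ref{lem-basis}, run in reverse.

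For \ref{pr-reduced} $\Rightarrow$ \ref{pr-min}, let $\mathbf{h}_1,\dots,\mathbf{h}_{n-1}$ be another degree-ordered basis. Apply the reduced representation property to each $\mathbf{h}_j=\sum_{i=1}^{n-1}f_{ij}\mathbf{u}_i$ with $\deg(f_{ij}\mathbf{u}_i)\le\deg(\mathbf{h}_j)$. Fix $k$. For $j=1,\dots,k$ we have $\deg(\mathbf{h}_j)\le\deg(\mathbf{h}_k)$, so in the expression of each of $\mathbf{h}_1,\dots,\mathbf{h}_k$ only those $\mathbf{u}_i$ with $\deg(\mathbf{u}_i)\le\deg(\mathbf{h}_k)$ can appear with nonzero coefficient. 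Since the $\mathbf{h}_j$ are linearly independent over $\k[s]$ (hence so are $\mathbf{h}_1,\dots,\mathbf{h}_k$), the $k$ vectors $\mathbf{h}_1,\dots,\mathbf{h}_k$ lie in the $\k[s]$-span of $\{\mathbf{u}_i:\deg(\mathbf{u}_i)\le\deg(\mathbf{h}_k)\}$; a free module of rank $k$ cannot embed in a free module of rank $<k$ (over the PID $\k[s]$), so there are at least $k$ such indices $i$. Because the $\mathbf{u}_i$ are degree-ordered, this forces $\deg(\mathbf{u}_k)\le\deg(\mathbf{h}_k)$.

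For \ref{pr-min} $\Rightarrow$ \ref{pr-LV}, I would invoke the existence of \emph{a} $\mu$-basis (established via Lemma~\ref{lem-basis} and the cited result of \cite{hhk2017}), say $\mathbf{v}_1,\dots,\mathbf{v}_{n-1}$, degree-ordered. By the already-proved implication \ref{pr-LV}$\Rightarrow$\ref{pr-reduced}$\Rightarrow$\ref{pr-min} applied to $\mathbf{v}$'s, and by hypothesis \ref{pr-min} on $\mathbf{u}$'s, we get $\deg(\mathbf{u}_i)=\deg(\mathbf{v}_i)$ for all $i$. Now expand each $\mathbf{u}_k$ in the $\mu$-basis $\mathbf{v}$'s with the degree bound from \ref{pr-reduced}: $\mathbf{u}_k=\sum_i g_{ik}\mathbf{v}_i$ with $\deg(g_{ik}\mathbf{v}_i)\le\deg(\mathbf{u}_k)$. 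Reading off the leading vector at degree $\deg(\mathbf{u}_k)$ shows $LV(\mathbf{u}_k)$ is a $\k$-linear combination of $\{LV(\mathbf{v}_i):\deg(\mathbf{v}_i)\le\deg(\mathbf{u}_k)\}$, i.e., the matrix relating $(LV(\mathbf{u}_i))$ to $(LV(\mathbf{v}_i))$ is block-lower-triangular with respect to the degree filtration; since the change-of-basis matrix $(g_{ik})$ over $\k[s]$ is invertible (constant nonzero determinant) and respects this filtration, its "leading part" on each degree block is an invertible scalar matrix, so the $LV(\mathbf{u}_i)$ are $\k$-linearly independent. Hence $\mathbf{u}_1,\dots,\mathbf{u}_{n-1}$ is a $\mu$-basis.

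The main obstacle is the bookkeeping in \ref{pr-min}$\Rightarrow$\ref{pr-LV}: one must carefully track which coefficients $g_{ik}$ can be nonzero given the degree constraints and argue that the induced map on leading vectors (organized by the degree strata, with possible ties in degree handled within a single stratum) is invertible over $\k$. If one prefers, the cleaner route is to cite \cite{song-goldman-2009} directly as the excerpt already suggests, but the self-contained argument above is the one I would write out, with the degree-filtration/triangularity observation being the technical heart.
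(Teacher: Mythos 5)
Your cyclic plan is sound and, for the first two implications, it matches the paper's (omitted, Song--Goldman-style) argument almost exactly: \ref{pr-LV}$\Rightarrow$\ref{pr-reduced} is the same cancellation-of-leading-terms argument (your minimization over representations is harmless but unnecessary -- the contradiction already shows \emph{every} representation satisfies $t\le\deg(\mathbf h)$), and your \ref{pr-reduced}$\Rightarrow$\ref{pr-min} is the paper's zero-block determinant argument recast as ``a free rank-$k$ module cannot embed in a free module of smaller rank,'' which is a valid equivalent. Where you genuinely diverge is \ref{pr-min}$\Rightarrow$\ref{pr-LV}. The paper argues directly: if $LV(\mathbf u_1),\dots,LV(\mathbf u_{n-1})$ were dependent, take the largest index $k$ with $\alpha_k\neq 0$ in the dependence, form $\mathbf h=\alpha_k\mathbf u_k-\sum_{i<k}\alpha_i s^{\deg(\mathbf u_k)-\deg(\mathbf u_i)}\mathbf u_i$, note $\deg(\mathbf h)<\deg(\mathbf u_k)$, and swap $\mathbf h$ for $\mathbf u_k$ to get a degree-ordered basis contradicting \ref{pr-min}. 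This is self-contained and needs neither the existence of a $\mu$-basis nor any change-of-basis analysis. Your route -- compare against an existing $\mu$-basis, equate degrees, and show the leading-coefficient matrix $\tilde G$ of the transition matrix $G$ is invertible -- is correct but heavier, and the step you flag as the ``technical heart'' is indeed the one you have not actually proved: ``$G$ invertible and filtration-respecting, hence the leading part of each diagonal block is invertible'' is not immediate. It follows from the degree bounds $\deg(g_{ik})\le\deg(\mathbf u_k)-\deg(\mathbf v_i)$ via a homogeneity/degree-count: conjugating by $\mathrm{diag}(\lambda^{\mu_1},\dots,\lambda^{\mu_{n-1}})$ and rescaling shows $\det(\tilde G)=\det(G)\neq 0$, and block-lower-triangularity of $\tilde G$ then forces each diagonal block to be invertible. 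With that lemma supplied your argument closes, but the paper's direct exchange argument is the cleaner choice and avoids presupposing that a $\mu$-basis exists.
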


We proceed with proving point-wise linear  independence of  the vectors in a $\mu$-basis.  In Theorem~1 of \cite{song-goldman-2009}, $\mu$-bases of real polynomial vectors were considered,  and  point-wise independence of the vectors in a $\mu$-basis was proven for every $s$ in $\mathbb R$. This proof can be word-by-word adapted to $\mu$-bases of polynomial vectors over $\k$ to show  point-wise independence of vectors in a $\mu$-basis for every $s$ in $\k$.   To prove Theorem~\ref{thm-bbomf} of our paper, however, we need a slightly stronger result:   point-wise independence of the vectors in a $\mu$-basis for every $s$ in  $\kk$. To arrive at this result, we first prove the following lemma. In this lemma and the following proposition, we use  
$\syz_{\k[s]}(\aa)$ to denote the syzygy module of $\aa$ over the polynomial ring $\k[s]$, and $\syz_{\kk[s]}(\aa)$ to denote  the syzygy module of $\aa$ over the polynomial ring $\kk[s]$. Elsewhere, we use a shorter notation 
$\syz(\aa)=\syz_{\k[s]}(\aa)$. 

\begin{lemma}\label{lem-ext} 
If $\uu_1,\dots,\uu_{n-1}$ is a $\mu$-basis of 
$\syz_{\k[s]}(\aa)$, then  $\uu_1,\dots,\uu_{n-1}$ is a $\mu$-basis of 
$\syz_{\kk[s]}(\aa)$. 
\end{lemma}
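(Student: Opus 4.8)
The plan is to verify directly the two defining conditions of a $\mu$-basis from Definition~\ref{def-mb} for the vectors $\uu_1,\dots,\uu_{n-1}$, now viewed as elements of $\kk[s]^n$. The first condition is essentially free: the degree of a polynomial vector does not change under the extension $\k\subseteq\kk$, so $\deg(\uu_i)$ and $LV(\uu_i)$ are the same whether computed over $\k[s]$ or over $\kk[s]$, and in particular each $LV(\uu_i)$ still lies in $\k^n\subseteq\kk^n$. Linear independence over $\k$ of $LV(\uu_1),\dots,LV(\uu_{n-1})$ is equivalent to the non-vanishing of some $(n-1)\times(n-1)$ minor of the $n\times(n-1)$ matrix having these as columns; that minor is a nonzero element of $\k$, hence a nonzero element of $\kk$, so the $LV(\uu_i)$ remain linearly independent over $\kk$.

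The substantive point is the second condition: that $\uu_1,\dots,\uu_{n-1}$ still \emph{generate} the larger module $\syz_{\kk[s]}(\aa)$. First I would recall that, by Definition~\ref{def-mb} together with Lemma~\ref{lem-basis}, the vectors $\uu_1,\dots,\uu_{n-1}$ form a free $\k[s]$-basis of $\syz_{\k[s]}(\aa)$. Now take an arbitrary $\hh\in\syz_{\kk[s]}(\aa)$ and fix a $\k$-basis $\{\gamma_\alpha\}$ of $\kk$; expanding every coefficient of every entry of $\hh$ in this basis and collecting terms writes $\hh=\sum_\alpha\gamma_\alpha\,\hh_\alpha$ as a finite sum with each $\hh_\alpha\in\k[s]^n$. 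Applying $\aa$ gives $0=\aa\hh=\sum_\alpha\gamma_\alpha(\aa\hh_\alpha)$; comparing, for each $j$, the coefficient of $s^j$ on both sides, and using the $\k$-linear independence of the $\gamma_\alpha$, forces every coefficient of every $\aa\hh_\alpha$ to vanish, i.e.\ $\hh_\alpha\in\syz_{\k[s]}(\aa)$ for all $\alpha$. Expanding each $\hh_\alpha$ in the $\k[s]$-basis $\uu_1,\dots,\uu_{n-1}$ and recombining then exhibits $\hh$ as a $\kk[s]$-linear combination of $\uu_1,\dots,\uu_{n-1}$. Together with the first condition, this shows that $\uu_1,\dots,\uu_{n-1}$ is a $\mu$-basis of $\syz_{\kk[s]}(\aa)$.

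The only real content, therefore, lies in the generation step, and the ``obstacle'' there is conceptual rather than computational: a priori $\syz_{\kk[s]}(\aa)$ could be strictly larger than the $\kk[s]$-span of the $\uu_i$, and one must use that passing from $\k$ to $\kk$ creates no syzygies other than $\kk[s]$-combinations of the old ones. The coefficient-by-coefficient argument above is the elementary form of this fact; it can also be phrased as flatness of $\kk[s]$ over $\k[s]$, which, applied to the exact sequence $0\to\syz_{\k[s]}(\aa)\to\k[s]^n\xrightarrow{\aa}\k[s]$, gives $\syz_{\kk[s]}(\aa)\cong\kk[s]\otimes_{\k[s]}\syz_{\k[s]}(\aa)$, so that a $\k[s]$-basis of the former is a $\kk[s]$-basis of the latter. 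I would present the elementary version to keep the development self-contained.
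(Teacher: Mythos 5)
Your proposal is correct and follows essentially the same route as the paper: expand the coefficients of an arbitrary $\hh\in\syz_{\kk[s]}(\aa)$ in a $\k$-basis of the relevant extension, use $\k$-linear independence of the basis elements to force each component $\hh_\alpha$ to be a syzygy over $\k[s]$, and then re-expand in the $\mu$-basis. The only cosmetic difference is that the paper first passes to the finite extension $\mathbb H$ generated by the coefficients of $\hh$ so as to work with a finite basis, whereas you take a basis of all of $\kk$ and observe the expansion is finite anyway; both are valid.
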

\begin{proof}Since $LV(\uu_1),\dots,LV(\uu_{n-1})$ are independent over $\k$, they also are independent over $\kk$. Thus, it remains to show that $\uu_1,\dots,\uu_{n-1}$ generate $\syz_{\kk[s]}(\aa)$. For an arbitrary $\hh=[h_1,\dots,h_n]^T \in  \syz_{\kk[s]}(\aa)$, consider the field  extension $\mathbb H$ of $\k$ generated by all the coefficients of the polynomials  $h_1,\dots,h_n$. Then $\mathbb H$ is a finite algebraic extension of $\k$ and, therefore, by one of the standard theorems of field theory (see, for example, the first two theorems in Section 41 of \cite{VDW}), $\mathbb H$ is a finite-dimensional vector space over $\k$.  Let $\gamma_1,\dots,\gamma_r\in \mathbb H\subset\kk$ be a vector space basis of  $\mathbb H$ over $\k$. By expanding each of the coefficients in $\hh$ in this basis, we can write $\hh$ as
\beq\label{eq-h1} \hh=\gamma_1\ww_1+\dots+\gamma_r\ww_r ,\eeq
for some $\ww_1,\dots,\ww_r\in\k[s]^n$. Multiplying by $\aa$ on the left, we get
\beq\label{eq-h2} 0=\gamma_1\aa\, \ww_1+\dots+\gamma_r\aa\, \ww_r .\eeq
Assume there exists $i\in\{1,\dots,r\}$ such that $\aa\, \ww_i\neq 0$. Let $k=\deg(\aa\, \ww_i)$ and let $b_j\in\k$ be the coefficient of the monomial $s^k$ in the polynomial
$\aa\, \ww_j$ for $j=1,\dots,r.$ Then, from \eqref{eq-h2}, we have
\beq\label{eq-h3} \nonumber 0=\gamma_1b_1+\dots+\gamma_r\,b_r .\eeq
Since $b_i\neq0$, this contradicts the assumption that $\gamma_1,\dots,\gamma_k$ is a vector space basis of  $\mathbb H$ over $\k$.
Thus, it must be the case that 
$$ \aa\ww_i=0 \text{ for all } i=1,\dots,r$$
and, therefore, \eqref{eq-h1} implies that the module $\syz_{\kk[s]}(\aa)$ is generated by $\syz_{\k[s]}(\aa)$. Since $\syz_{\k[s]}(\aa)$ is generated by $\uu_1,\dots,\uu_{n-1}$, this completes the proof.
\end{proof}

\begin{proposition}
[Point-wise independence over $\kk$]\label{prop-pli} If $\mathbf{u}_{1},\dots,
\mathbf{u}_{n-1}$ is a $\mu$-basis of $\syz_{\k[s]}(\aa)$, then for any value
$s\in\overline{\mathbb{K}}$,  the vectors $\mathbf{u}_{1}(s),\dots,
\mathbf{u}_{n-1}(s)$ are linearly independent over $\overline{\mathbb{K}}$. 
\end{proposition}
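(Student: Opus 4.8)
The plan is to transport the problem to the algebraically closed field $\kk$ by means of Lemma~\ref{lem-ext}, and then carry out the classical argument that clears the linear factor $s-s_0$ from a hypothetical dependency, phrased through the reduced-representation property of a $\mu$-basis. By Lemma~\ref{lem-ext}, $\uu_1,\dots,\uu_{n-1}$ is a $\mu$-basis of $\syz_{\kk[s]}(\aa)$; since $\kk$ is an algebraically closed field, it therefore suffices to prove the proposition in the case where the ground field is already algebraically closed, so I assume henceforth that $\k=\kk$. Permuting the $\uu_i$ affects neither the hypothesis nor the conclusion, so I may additionally assume the basis is degree-ordered, $\deg(\uu_1)\le\cdots\le\deg(\uu_{n-1})$. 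I will use two properties of this $\mu$-basis: it is linearly independent over $\kk[s]$ (Lemma~\ref{lem-basis}), and it has the reduced-representation property of Proposition~\ref{prop-equiv-def}(\ref{pr-reduced}), namely that every $\mathbf{g}\in\syz_{\kk[s]}(\aa)$ can be written $\mathbf{g}=\sum_{i=1}^{n-1}f_i\,\uu_i$ with $\deg(f_i\,\uu_i)\le\deg(\mathbf{g})$.

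Suppose toward a contradiction that for some $s_0\in\kk$ there are scalars $c_1,\dots,c_{n-1}\in\kk$, not all zero, with $\sum_{i=1}^{n-1}c_i\,\uu_i(s_0)=0$; let $k$ be the largest index with $c_k\neq0$, and set $\hh:=\sum_{i=1}^{k}c_i\,\uu_i$. Then $\hh\in\syz_{\kk[s]}(\aa)$, $\hh(s_0)=0$, and $\deg(\hh)\le\max_{1\le i\le k}\deg(\uu_i)=\deg(\uu_k)$ by degree-orderedness. Since every component of $\hh$ vanishes at $s_0$, we may write $\hh=(s-s_0)\,\mathbf{g}$ with $\mathbf{g}\in\kk[s]^n$; as $s-s_0$ is a nonzerodivisor in $\kk[s]$, cancelling it in $(s-s_0)(\aa\,\mathbf{g})=\aa\,\hh=0$ yields $\aa\,\mathbf{g}=0$, so $\mathbf{g}\in\syz_{\kk[s]}(\aa)$, and moreover $\deg(\mathbf{g})<\deg(\uu_k)$.

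Now apply the reduced-representation property to $\mathbf{g}$: there are $f_1,\dots,f_{n-1}\in\kk[s]$ with $\mathbf{g}=\sum_{i=1}^{n-1}f_i\,\uu_i$ and $\deg(f_i\,\uu_i)\le\deg(\mathbf{g})<\deg(\uu_k)$. For each $i\ge k$ we have $\deg(\uu_i)\ge\deg(\uu_k)>\deg(f_i\,\uu_i)$, which is impossible unless $f_i=0$; hence $\mathbf{g}=\sum_{i=1}^{k-1}f_i\,\uu_i$, and so $\hh=(s-s_0)\mathbf{g}=\sum_{i=1}^{k-1}\bigl((s-s_0)f_i\bigr)\,\uu_i$. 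Comparing this with $\hh=\sum_{i=1}^{k}c_i\,\uu_i$ and using the $\kk[s]$-linear independence of $\uu_1,\dots,\uu_{n-1}$ to equate the coefficients of $\uu_k$ forces $c_k=0$, contrary to the choice of $k$. I do not anticipate a serious obstacle: the only substantive ingredient is that the defining properties of a $\mu$-basis persist under extension of scalars from $\k[s]$ to $\kk[s]$, which is exactly what Lemma~\ref{lem-ext} supplies; once the problem has been moved to $\kk[s]$, what remains is the standard factor-cancellation argument, the sole point meriting care being that $s-s_0$ is a nonzerodivisor, so that $\aa\,\mathbf{g}=0$ really does follow from $\aa\,\hh=0$.
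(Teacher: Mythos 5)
Your proof is correct, and it follows the same backbone as the paper's: invoke Lemma~\ref{lem-ext} to upgrade $\uu_1,\dots,\uu_{n-1}$ to a $\mu$-basis of $\syz_{\kk[s]}(\aa)$, form the nontrivial combination $\hh=\sum_{i\le k}c_i\uu_i$ vanishing at $s_0$, and extract from it a syzygy of degree strictly less than $\deg(\uu_k)$ by clearing the factor $s-s_0$. The only divergence is the endgame: the paper substitutes the lower-degree syzygy $\tilde\hh=\hh/\gcd(\hh)$ for $\uu_k$ in the basis and contradicts the degree-optimality characterization (Proposition~\ref{prop-equiv-def}, part 3), whereas you expand $\mathbf{g}=\hh/(s-s_0)$ via the reduced-representation characterization (part 2) and then use $\kk[s]$-linear independence (Lemma~\ref{lem-basis}) to force $c_k=0$. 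These two routes are interchangeable since the paper proves the two characterizations equivalent; yours has the small advantage of not needing to verify that the swapped set is again a basis, at the cost of an extra coefficient-comparison step. Your reduction to the case $\k=\kk$ at the outset is legitimate and makes the application of Proposition~\ref{prop-equiv-def} over $\kk[s]$ clean.
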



\begin{proof}
Suppose there exists $s_{0}\in\overline{\mathbb{K}}$ such that $\uu_{1}%
(s_{0}),\dots, \uu_{n-1}(s_{0})$ are linearly dependent over $\overline{
\mathbb{K}}$. Then there exist constants $\alpha_{1},\dots,
\alpha_{n-1}\in\overline{\mathbb{K}}$, not all zero, such that
\[
\alpha_{1}\,\mathbf{u}_{1}(s_{0})+\dots+\alpha_{n-1}\,\mathbf{u}_{n-1}%
(s_{0})=0.
\]
Let $i=\max\{j\,|\, \alpha_{j}\neq0\}$ and let
\[
\mathbf{h}= \alpha_{1}\,\mathbf{u}_{1}+\dots+\alpha_{i}\,\mathbf{u}_{i}.
\]
Then $\mathbf{h}\in\operatorname{syz}_{\kk[s]}(\mathbf{a})$ and is not identically
zero, but $\mathbf{h}(s_{0})=0$. It follows that $\gcd(\mathbf{h})\neq1$ in
$\kk[s]$ and, therefore, $\tilde \hh=\frac1 {\gcd(\mathbf{h})}\,
\mathbf{h}$  belongs to  $\syz_{\kk[s]}(\aa)$ and has degree strictly less than the degree of $\mathbf{h}$.

By Lemma~\ref{lem-ext},  $\mathbf{u}_{1},\dots,
\mathbf{u}_{n-1}$ is a $\mu$-basis of  $\syz_{\kk[s]}(\aa)$ and, since   $$\mathbf{u}_{i}=\frac1 {\alpha_{i}}\,\left( \gcd(\mathbf{h}%
)\,\tilde{\mathbf{h}}- \alpha_{1}\,\mathbf{u}_{1}-\dots-\alpha_{i-1}
\,\mathbf{u}_{i-1}\right),$$ the set of syzygies
\[
\{\mathbf{u}_{1}, \dots, \mathbf{u}_{i-1}, \mathbf{u}_{i+1},\dots,
\mathbf{u}_{n-1}, \tilde{\mathbf{h}}\}
\]
is a basis  of  $\syz_{\kk[s]}(\aa)$. Ordering it by degree and observing that  $\deg(\tilde{
\mathbf{h}})<\deg(\mathbf{h})=\deg(\mathbf{u}_{i})$ leads to a contradiction
with the degree optimality property of a $\mu$-basis.
\end{proof}

\subsection{The building blocks of a degree-optimal moving frame}

From the discussions of the last section, it does not come as unexpected that a
{B\'ezout} vector and a set of point-wise independent syzygies can serve as
building blocks for a moving frame.


\begin{proposition}
[Building blocks of a moving frame]\label{prop-bb} For a nonzero
$\mathbf{a}\in\mathbb{K}[s]^{n}$, let $\mathbf{h}_{1},\dots, \mathbf{h}_{n-1}$
be elements of $\operatorname{syz}(\mathbf{a})$ such that, for every
$s\in\overline{\mathbb{K}}$, vectors $\mathbf{h}_{1}(s),\dots, \mathbf{h}%
_{n-1}(s)$ are linearly independent over $\overline{\mathbb{K}}$, and let
$\mathbf{h}_{0}$ be a {B\'ezout} vector of $\aa$. Then the matrix
\[
P=[\mathbf{h}_{0},\mathbf{h}_{1},\dots, \mathbf{h}_{n-1}]
\]
is a moving frame at $\aa$.
\end{proposition}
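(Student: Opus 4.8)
The plan is to show that $P=[\mathbf{h}_0,\mathbf{h}_1,\dots,\mathbf{h}_{n-1}]$ satisfies the two requirements in Definition~\ref{def-mf}: first that $\aa\,P=[\gcd(\aa),0,\dots,0]$, and second that $P\in GL_n(\k[s])$, i.e.\ that $P$ is invertible over $\k[s]$. The first requirement is immediate: by hypothesis $\mathbf{h}_0$ is a B\'ezout vector, so $\aa\,\mathbf{h}_0=\gcd(\aa)$, and each $\mathbf{h}_i$ for $i\geq 1$ lies in $\syz(\aa)$, so $\aa\,\mathbf{h}_i=0$. Hence $\aa\,P=[\gcd(\aa),0,\dots,0]$, as required. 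The only real content is the invertibility of $P$.

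For invertibility, recall (as noted in the excerpt) that a matrix in $\k[s]^{n\times n}$ lies in $GL_n(\k[s])$ if and only if its columns are point-wise linearly independent over $\kk$, equivalently if and only if $|P|$ is a nonzero constant in $\k$. So it suffices to show that $|P(s_0)|\neq 0$ for every $s_0\in\kk$. I would argue by contradiction: suppose $|P(s_0)|=0$ for some $s_0\in\kk$. Then the columns $\mathbf{h}_0(s_0),\mathbf{h}_1(s_0),\dots,\mathbf{h}_{n-1}(s_0)$ are linearly dependent over $\kk$, so there exist $\alpha_0,\dots,\alpha_{n-1}\in\kk$, not all zero, with $\sum_{i=0}^{n-1}\alpha_i\,\mathbf{h}_i(s_0)=0$. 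The hypothesis gives point-wise independence of $\mathbf{h}_1(s_0),\dots,\mathbf{h}_{n-1}(s_0)$ over $\kk$, so necessarily $\alpha_0\neq 0$; otherwise the relation would force all $\alpha_i=0$. Dividing by $\alpha_0$, we may write $\mathbf{h}_0(s_0)=\sum_{i=1}^{n-1}\beta_i\,\mathbf{h}_i(s_0)$ for suitable $\beta_i\in\kk$.

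The key step is to derive a contradiction from this. Evaluating $\aa$ at $s_0$ and pairing with the relation, we get $\gcd(\aa)(s_0)=\aa(s_0)\,\mathbf{h}_0(s_0)=\sum_{i=1}^{n-1}\beta_i\,\aa(s_0)\,\mathbf{h}_i(s_0)=0$ (using $\aa(s_0)\,\mathbf{h}_i(s_0)=0$, which follows from $\aa\,\mathbf{h}_i=0$ as a polynomial identity). So $s_0$ is a root of $\gcd(\aa)$. But this alone is not yet absurd — some care is needed here, and I expect this to be the main obstacle: one must rule out that $s_0$ is a common root of the components of $\aa$. The way around it is to work instead with the primitive vector $\tilde\aa=\aa/\gcd(\aa)$, for which $\gcd(\tilde\aa)=1$. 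Note that $\syz(\aa)=\syz(\tilde\aa)$ and that if $\mathbf{h}_0$ is a B\'ezout vector of $\aa$ then it is also a B\'ezout vector of $\tilde\aa$ up to the constant factor $\gcd(\aa)$; more precisely $\tilde\aa\,\mathbf{h}_0 = \gcd(\aa)/\gcd(\aa)=1$ only after suitable normalization — so the cleanest route is to observe that the construction and the point-wise independence hypotheses are unchanged under replacing $\aa$ by $\tilde\aa$, reduce to the case $\gcd(\aa)=1$, and in that case the computation above gives $1=0$, the desired contradiction. Hence $|P(s_0)|\neq 0$ for all $s_0\in\kk$, so $|P|$ is a nonzero constant, $P\in GL_n(\k[s])$, and $P$ is a moving frame at $\aa$.
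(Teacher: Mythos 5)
Your proposal is correct and follows essentially the same argument as the paper: reduce invertibility to point-wise nonvanishing of $|P|$, take a linear dependence at $s_0$, and play the B\'ezout identity against the point-wise independence of the syzygy columns. The only cosmetic difference is that the paper disposes of the gcd by multiplying the dependence relation by $\tilde\aa(s_0)=\aa(s_0)/\gcd(\aa)(s_0)$ to force $\alpha_0=0$ directly, whereas you first deduce $\alpha_0\neq 0$ and then normalize $\aa$ to $\tilde\aa$ (a reduction that is indeed legitimate, since $\tilde\aa\,\mathbf{h}_0=1$ and $\syz(\aa)=\syz(\tilde\aa)$, with no further normalization needed).
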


\begin{proof}
Clearly $\mathbf{a}\,P=[\gcd(\mathbf{a}),0,\dots,0]$. Let $\tilde{
\mathbf{a}}=\frac1 {\gcd(\mathbf{a})}\, \mathbf{a}$, then
\begin{equation}
\label{eq-mfta}\tilde{\mathbf{a}}\,P=[1,0,\dots,0].
\end{equation}
Assume that the determinant of $P$ does not equal to a nonzero constant. Then
there exists $s_{0}\in\overline{\mathbb{K}}$ such that
\(
|\mathbf{h}_{0}(s_{0}),\mathbf{h}_{1}(s_{0}),\dots, \mathbf{h}_{n-1}(s_{0})|=0
\)
and, therefore, there exist constants $\alpha_{0},\dots, \alpha_{n}%
\in\overline{\mathbb{K}}$, not all zero, such that
\[
\alpha_{0}\,\mathbf{h}_{0}(s_{0})+\alpha_{1}\,\mathbf{h}_{1}(s_{0}%
)+\dots+\alpha_{n-1}\, \mathbf{h}_{n-1}(s_{0})=0.
\]
Multiplying on the left by $\tilde{\mathbf{a}}(s_{0})$ and using
\eqref{eq-mfta}, we get $\alpha_{0}=0$.  Then
\[
\alpha_{1}\,\mathbf{h}_{1}(s_{0})+\dots+\alpha_{n-1}\, \mathbf{h}_{n-1}%
(s_{0})=0
\]
for some set of constants $\alpha_{1},\dots, \alpha_{n-1}\in\overline
{\mathbb{K}}$, not all zero. But this contradicts our assumption that for
every $s\in\overline{\mathbb{K}}$, vectors $\hh_{1}(s),\dots, \hh_{n-1}(s)$ are
linearly independent over $\overline{\mathbb{K}}$. Thus, the determinant of $P$
equals to a nonzero constant, and therefore $P$ is a moving frame.
\end{proof}


\begin{theorem}
\label{thm-bbomf} A matrix $P$ is a degree-optimal moving frame at $\mathbf{a}$ if
and only if $P_{*1}$ is a {B\'ezout} vector of $\aa$ of minimal degree and $P_{*2},
\dots,P_{*n-1}$ is a $\mu$-basis of $\mathbf{a}$.
\end{theorem}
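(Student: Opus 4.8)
\emph{Plan.} Both implications follow by assembling results already in hand: Proposition~\ref{prop-mfbasis} (the last $n-1$ columns of a moving frame form a basis of $\syz(\aa)$), Proposition~\ref{prop-bb} (a B\'ezout vector together with point-wise independent syzygies assemble into a moving frame), Proposition~\ref{prop-pli} ($\mu$-basis vectors are point-wise independent over $\kk$), the equivalent characterizations of a $\mu$-basis in Proposition~\ref{prop-equiv-def}, and the remark after Definition~\ref{def-omf} that all degree-optimal moving frames at $\aa$ have the same column-wise degrees. As the excerpt already notes, the statement is not surprising; the work is in ordering these ingredients. One small bookkeeping point: a $\mu$-basis in Definition~\ref{def-mb} is an unordered set, whereas Definition~\ref{def-omf} demands the last $n-1$ columns be listed in non-decreasing degree, so in the ``if'' direction I permit myself to permute columns $2,\dots,n$ (which preserves the moving-frame and $\mu$-basis properties) to arrange them in degree order.

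\emph{The ``if'' direction.} Assume $P_{*1}$ is a minimal-degree B\'ezout vector of $\aa$ and $\{P_{*2},\dots,P_{*n}\}$ is a $\mu$-basis; after permuting the last $n-1$ columns we may assume $\deg(P_{*2})\le\cdots\le\deg(P_{*n})$. By Proposition~\ref{prop-pli} the vectors $P_{*2},\dots,P_{*n}$ are point-wise linearly independent over $\kk$, hence by Proposition~\ref{prop-bb} the matrix $P$ is a moving frame at $\aa$. To check degree-optimality, let $P'$ be any moving frame at $\aa$ with $\deg(P'_{*2})\le\cdots\le\deg(P'_{*n})$. Its first column is a B\'ezout vector of $\aa$ (immediate from $\aa P'=[\gcd(\aa),0,\dots,0]$), so minimality of $P_{*1}$ gives $\deg(P_{*1})\le\deg(P'_{*1})$; and by Proposition~\ref{prop-mfbasis} its last $n-1$ columns form a degree-ordered basis of $\syz(\aa)$, so the optimality of a $\mu$-basis (Proposition~\ref{prop-equiv-def}, part~\ref{pr-min}) gives $\deg(P_{*i})\le\deg(P'_{*i})$ for $i=2,\dots,n$. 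Thus $P$ satisfies Definition~\ref{def-omf}.

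\emph{The ``only if'' direction.} Assume $P$ is degree-optimal. From $\aa P=[\gcd(\aa),0,\dots,0]$ we read off that $P_{*1}$ is a B\'ezout vector and $P_{*2},\dots,P_{*n}\in\syz(\aa)$; by Proposition~\ref{prop-mfbasis} the latter form a basis of $\syz(\aa)$, and it is degree-ordered by Definition~\ref{def-omf}. Fix a degree-ordered $\mu$-basis $\uu_1,\dots,\uu_{n-1}$ of $\aa$ (existence: \cite{cox-sederberg-chen-1998,hhk2017}) and a minimal-degree B\'ezout vector $\hh_0$ (existence: the well-ordering argument following Definition~\ref{def-bezm}). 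By the ``if'' direction, $Q=[\hh_0,\uu_1,\dots,\uu_{n-1}]$ is a degree-optimal moving frame, so $\deg(P_{*1})=\deg(\hh_0)$ and $\deg(P_{*i})=\deg(\uu_{i-1})$ for $i=2,\dots,n$, because all degree-optimal moving frames at $\aa$ share the same column-wise degrees. The first equality shows $P_{*1}$ attains the minimal B\'ezout degree, hence is a minimal-degree B\'ezout vector. For the rest: $P_{*2},\dots,P_{*n}$ is a degree-ordered basis of $\syz(\aa)$ whose degree list coincides with that of the $\mu$-basis $\uu_1,\dots,\uu_{n-1}$; therefore, for any degree-ordered basis $\hh_1,\dots,\hh_{n-1}$ of $\syz(\aa)$, Proposition~\ref{prop-equiv-def}, part~\ref{pr-min}, applied to the $\mu$-basis gives $\deg(P_{*(i+1)})=\deg(\uu_i)\le\deg(\hh_i)$. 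Hence $P_{*2},\dots,P_{*n}$ itself enjoys the optimality property~\ref{pr-min}, and by the equivalence in Proposition~\ref{prop-equiv-def} it enjoys property~\ref{pr-LV}, i.e.\ it is a $\mu$-basis of $\aa$.

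\emph{Main obstacle.} The one genuinely delicate step is the last one: deducing that a degree-ordered basis of $\syz(\aa)$ realizing the $\mu$-type must have $\k$-independent leading vectors. I dispatch it by quoting the equivalence of parts~\ref{pr-LV} and~\ref{pr-min} of Proposition~\ref{prop-equiv-def}; failing that, one argues directly, as in the (commented-out) proof of that proposition, by using a dependence among the leading vectors to produce a lower-degree basis and contradicting minimality of the $\mu$-type. Everything else---existence of the building blocks, the point-wise independence input of Proposition~\ref{prop-pli}, and the canonicity of column degrees---is routine.
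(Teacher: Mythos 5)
Your proof is correct and follows essentially the same route as the paper's: both directions rest on Propositions~\ref{prop-mfbasis}, \ref{prop-bb}, \ref{prop-pli} and the equivalence of properties in Proposition~\ref{prop-equiv-def}. The only cosmetic difference is in the ``only if'' direction, where you first pin down the column degrees of $P$ by comparing with an explicitly assembled optimal frame $Q$ and then invoke the implication (\ref{pr-min})$\Rightarrow$(\ref{pr-LV}), whereas the paper argues by contradiction using the contrapositive of the same implication; the underlying content is identical.
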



\begin{proof}
\hfill

\begin{description}
\item[($\Longrightarrow$)]{\Red Let $P$ be a degree-optimal moving frame at $\mathbf{a}$.
From Definition~\ref{def-omf}, it immediately follows that $P_{*1}$ is a
{B\'ezout} vector of $\aa$ of minimal degree. From Proposition~\ref{prop-mfbasis}, it
follows that $P_{*2}, \dots, P_{*n}$ is a basis of $\operatorname{syz}%
(\mathbf{a})$. Assume $P_{*2}, \dots, P_{*n}$   is not a $\mu$-basis of $\mathbf{a}$, and  let
$\mathbf{u}_{1},\dots, \mathbf{u}_{n-1}$ be a $\mu$-basis.
 From Proposition~\ref{prop-pli}, it follows that the vectors $\mathbf{u}_{1}(s),\dots, \mathbf{u}_{n-1}(s)$ are
independent for all $s\in\overline{\mathbb{K}}$. By Proposition~\ref{prop-bb},
the matrix $P^{\prime}=[P_{*1}, \uu_{1},\dots, \uu_{n-1}]$ is a moving frame at $\aa$. On
the other hand, since  $P_{*2}, \dots, P_{*n}$ is not a $\mu$-basis, then  by Proposition~\ref{prop-equiv-def}, it is not a basis  of optimal degree, and,  therefore, there
 exists $k\in\{1,\dots, n-1\}$, such that $\deg(\mathbf{u}%
_{k})<\deg(P_{*k+1})$.  This
contradicts our assumption that $P$ is degree-optimal. Therefore,
$P_{*2}, \dots, P_{*n}$ is a $\mu$-basis.}

\item ($\Longleftarrow$) Assume $P_{*1}$ is a {B\'ezout} vector of $\aa$ of minimal
degree and $P_{*2}, \dots,P_{*n-1}$ is a $\mu$-basis of $\mathbf{a}$. Then
Proposition~\ref{prop-pli} implies that the vectors $P_{*2}(s), \dots,P_{*n-1}(s)$
are independent for all $s\in\overline{\mathbb{K}}$ and so $P$ is a moving
frame due to Proposition~\ref{prop-bb}. Assume there exists a moving frame
$P^{\prime}$ and an integer $k\in\{1,\dots, n\}$, such that $\deg(P^{\prime
}_{*k})<\deg(P_{*k})$. If $k=1$, then we have a contradiction with the
assumption that $P_{*1}$ is a {B\'ezout} vector of minimal degree. If $k>1$,
we have a contradiction with the degree optimality property of a $\mu$-basis.
Thus $P$ satisfies Definition~~\ref{def-omf} of a degree-optimal moving frame.
\end{description}

\end{proof}

Theorem~\ref{thm-bbomf} implies the following three-step process for
constructing a degree-optimal moving frame at $\mathbf{a}$. 

\begin{enumerate}

\item Construct a {B\'ezout} vector $\mathbf{b}$ of $\mathbf{a}$ of minimal
degree. 

\item Construct a $\mu$-basis $\mathbf{u}_{1},\dots, \mathbf{u}_{n-1}$ of
$\mathbf{a}$. 

\item Let $P=[\mathbf{b},\mathbf{u}_{1},\dots, \mathbf{u}_{n-1}]$. 
\end{enumerate}

However, by exploiting the relationship between these building blocks, we  develop, in Section~\ref{sect-alg}, an algorithm that \emph{simultaneously constructs a {B\'ezout} vector of minimal degree and a $\mu$-basis}, avoiding  redundancies embedded in the above three-step procedure.

\subsection{The $(\beta, \mu)$-type of a polynomial vector}
The degree-optimality  property of a $\mu$-basis, stated in Proposition~\ref{prop-equiv-def}, insures that, although a $\mu$-basis of $\aa$ is not unique,  the  ordered list of the degrees of a $\mu$-basis of $\aa$ is unique. This list is called the $\mu$-type of $\aa$.  Thus the set of polynomial vectors can be split into classes according to their $\mu$-type. An  analysis of the $\mu$-strata of the set of polynomial vectors is given by  D'Andrea \cite{andrea-2004}, Cox and Iarrobino \cite{CI-2015}. Similarly, although  a minimal-degree \bez vector for $\aa$ is not unique, its degree is unique.  If we denote this degree  by $\beta$, we can refine the classification of polynomial vectors by studying their $(\beta,\mu)$-strata. In this section, we explore the relationship between the $\mu$-type and the $\beta$-type of a polynomial vector.%
  
We start by showing that the degree of a minimal-degree {B\'ezout} vector of $\aa$ is bounded  by the maximal degree of a $\mu$-basis of $\aa$. This result is repeatedly used in the paper. 
\begin{theorem}
\label{bez-mu} 
For any nonzero $\aa \in \mathbb{K}[s]^{n}$, and for any minimal-degree {B\'ezout} vector $\bb$  and any $\mu$-basis $\uu_1,\ldots,\uu_{n-1}$ of $\aa$, we have
\begin{enumerate}
\item if $\deg(\aa)=\deg\left(\gcd(\aa)\right)$, then $\deg(\bb)=0$ and $\deg(\uu_i)=0$ for $i=1,\dots,n-1$.
\item otherwise $\deg(\bb) < \max_j\{\deg(\uu_j)\}$.
\end{enumerate} 
\end{theorem}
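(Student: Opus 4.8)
The plan is to reduce everything to statements about a single degree-optimal moving frame $P=[\bb,\uu_1,\ldots,\uu_{n-1}]$, which exists and has the advertised building blocks by Theorem \ref{thm-bbomf}; since all minimal \bez vectors share a degree and all $\mu$-bases share their list of degrees, it suffices to prove the inequalities for one such $P$. Write $\tilde\aa=\aa/\gcd(\aa)$, so that $\tilde\aa$ is the first row of $P^{-1}$ and $\tilde\aa\,\bb=1$, $\tilde\aa\,\uu_i=0$.

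First I would dispose of case (1). If $\deg(\aa)=\deg(\gcd(\aa))$, then $\tilde\aa$ is a constant nonzero row vector, so some component $\tilde a_k$ is a nonzero scalar; then $\bb=\frac1{\tilde a_k}e_k$ is a \bez vector of degree $0$, forcing $\deg(\bb)=0$, and every syzygy of a constant vector $\tilde\aa$ is generated over $\k$ by constant vectors (the kernel of a constant linear map is spanned by constant vectors), so a degree-ordered basis of $\syz(\aa)$ can be taken with all entries constant; by the degree-optimality property (Proposition \ref{prop-equiv-def}, part \ref{pr-min}) a $\mu$-basis then also has $\deg(\uu_i)=0$ for all $i$.

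For case (2), assume $\deg(\aa)>\deg(\gcd(\aa))$, so $\tilde\aa$ is nonconstant, i.e. $\deg(\tilde\aa)\ge 1$. The key identity is $\tilde\aa\,\bb=1$. Let $\mu=\max_j\deg(\uu_j)=\deg(\uu_{n-1})$. Suppose, for contradiction, that $\deg(\bb)\ge\mu$. Consider the leading vector $LV(\bb)\in\k^n$; it is a nonzero vector. The idea is to show $LV(\bb)$ lies in the $\kk$-span of $LV(\uu_1),\ldots,LV(\uu_{n-1})$ — equivalently in the kernel of $LV(\tilde\aa)$ when $\deg(\bb)\ge 1$ — and then to subtract off an appropriate $\k[s]$-combination of the $\uu_i$ (using that $\mu\le\deg(\bb)$, so the shifts $s^{\deg(\bb)-\deg(\uu_i)}$ are polynomials) to lower the degree of $\bb$ while staying in $\Bez$, contradicting minimality. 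The relation between $LV(\bb)$ and the syzygy leading vectors comes from expanding $\tilde\aa\,\bb=1$: if $\deg(\bb)=t\ge 1$, the coefficient of $s^{t+\deg(\tilde\aa)}$ (the top term) in $\tilde\aa\,\bb$ must vanish unless cancellation occurs; one must track this carefully, since $\tilde\aa\,\bb$ has degree $0$ while $\deg(\tilde\aa)+\deg(\bb)$ could be larger, so the top coefficient $LV(\tilde\aa)^{\!\top}LV(\bb)$ (or the appropriate partial sum) is forced to be $0$. This shows $LV(\bb)$ is a syzygy leading vector, hence — since $LV(\uu_1),\ldots,LV(\uu_{n-1})$ span the $(n-1)$-dimensional space $\{v:LV(\tilde\aa)^{\!\top}v=0\}$ — a $\kk$-combination, and by the leading-vector cancellation argument of Lemma \ref{lem-basis} applied in reverse, subtracting $\sum_i c_i s^{t-\deg(\uu_i)}\uu_i$ strictly drops $\deg(\bb)$. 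Iterating until $\deg(\bb)<\mu$ gives the claim; one also checks $\deg(\bb)=0$ is impossible here unless $\mu=0$, which by case (1)'s contrapositive would force $\deg(\aa)=\deg(\gcd(\aa))$.

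The main obstacle I anticipate is the bookkeeping with leading terms when $\deg(\tilde\aa)\ge 1$: the product $\tilde\aa\,\bb$ collapses to a constant, so the vanishing of many top coefficients of $\bb$ (not just one scalar condition) must be extracted, and one has to argue that whatever nonzero leading vector $LV(\bb)$ has, it is annihilated by $LV(\tilde\aa)$, i.e. genuinely a leading vector of a syzygy. A cleaner route around this, which I would try first, is to work directly with $P^{-1}$: since $\aa$ nonconstant means $\deg(\tilde\aa)\ge1$, and $\det P\in\k^\times$, column-degree/row-degree duality for unimodular matrices (the predictable-degree / Forney-index arguments) forces the first column of $P$ to have degree at most $\max_{j\ge 2}\deg(P_{*j})$ whenever equality fails in case (1); this is exactly the kind of statement that the Sylvester-type machinery of Section \ref{sect-la} will make transparent, so the slick proof likely just cites the degree relations there. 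I would present the self-contained leading-vector argument as the primary proof and remark on the $P^{-1}$ viewpoint.
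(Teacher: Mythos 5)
Your proposal is correct and is essentially the paper's own proof: case (1) by exhibiting constant building blocks, and case (2) by showing $LV(\bb)$ lies in $LV(\aa)^{\perp}$, which is spanned by the $LV(\uu_j)$, and then subtracting $\sum_j\alpha_j s^{\deg(\bb)-\deg(\uu_j)}\uu_j$ to contradict minimality. The bookkeeping you worry about is not an issue — the coefficient of $s^{\deg(\aa)+\deg(\bb)}$ in $\aa\,\bb=\gcd(\aa)$ is exactly the single scalar $LV(\aa)LV(\bb)$, which vanishes because $\deg(\gcd(\aa))<\deg(\aa)$ — and no iteration is needed, since one subtraction already contradicts minimality.
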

\begin{proof}\hfill
{\begin{enumerate}
\item The condition $\deg(\aa)=\deg\left(\gcd(\aa)\right)$ implies that $\aa=\gcd(\aa)\,v$, where $v$ is a constant non-zero vector.  In this case, it is obvious how to construct $\bb$ and $\uu_1,\ldots,\uu_{n-1}$, each with constant components.
\item In this case, $\deg(\aa)>\deg\left(\gcd(\aa)\right)$. 
The coefficient of $\aa\,\bb$ for $s^{\deg(\aa)+\deg(\bb)}$ is $LV(\aa)LV(\bb)$. By definition of \bez vector, $\aa\bb = \gcd(\aa)$. Therefore, by our assumption, $\deg(\aa\,\bb)<\deg(\aa)$. Thus $LV(\aa)LV(\bb) = 0$ or, in other words, $LV(\bb) \in LV(\aa)^{\perp}$. Let $\uu_1,\ldots,\uu_{n-1}$ be a $\mu$-basis of $\aa$. 
By a similar argument, since $\aa\,\uu_j = 0$, we have $LV(\uu_j) \in LV(\aa)^{\perp}$  for  $j=1,\dots, n-1$. By definition of a $\mu$-basis,  $LV(\uu_j)$ are linearly independent, and so they form a basis for $LV(\aa)^{\perp}$. Therefore, there exist constants $\alpha_1,\ldots,\alpha_{n-1}$ such that 
\(\displaystyle{
LV(\mathbf{b}) = \sum_{j=1}^{n-1} \alpha_{j}LV(\uu_{j}).}
\)
Suppose that $\deg(\mathbf{b}) \ge \max_{j}
\{\deg(\uu_{j})\}$. Define
\(\displaystyle{
\tilde{\mathbf{b}} = \mathbf{b}- \sum_{j=1}^{n-1} \alpha_{j}\uu_{j}%
s^{\deg(\mathbf{b})-\deg(\uu_{j})}.}
\)
Then $\mathbf{a}\tilde{\mathbf{b}}=\gcd(\aa)$ and $\deg(\tilde{\mathbf{b}}) <
\deg(\mathbf{b})$, a contradiction to the minimality of $\deg(\mathbf{b})$.
Therefore, $\deg(\mathbf{b}) < \max_{j} \{\deg(\uu_{j})\}$.
\end{enumerate}
}
\end{proof}
In the next proposition, we show that, except for the upper bound provided by $\mu_{n-1}-1$, no other additional restrictions on the degree of the minimal \bez vector are imposed by the $\mu$-type, and therefore the $\beta$-type provides an essentially new characteristic of a polynomial vector.

\begin{proposition}\label{mu-type}
Fix $n\ge2$. For all ordered lists of nonnegative integers $\mu_1\le \cdots\le\mu_{n-1}$, with $\mu_{n-1}\not=0$, and for all $j \in \left\{0,\ldots,\mu_{n-1}-1\right\}$, there exists $\aa \in \mathbb{K}[s]^n$ such that {$\gcd(\aa)=1$} and 
\begin{enumerate}
\item for any $\mu$-basis $\uu_1,\ldots,\uu_{n-1}$ of $\aa$, we have $\deg(\uu_i) = \mu_i$, $i=1,\ldots,n-1$.
\item for any  minimal-degree B\'ezout vector $\bb$ of $\aa$, we have $\deg(\bb)=j$.
\end{enumerate}
\end{proposition}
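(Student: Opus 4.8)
The plan is to construct, for each target $\mu$-type $\mu_1\le\cdots\le\mu_{n-1}$ (with $\mu_{n-1}\ne 0$) and each target B\'ezout degree $j\in\{0,\ldots,\mu_{n-1}-1\}$, an explicit vector $\aa\in\k[s]^n$ by directly prescribing a $\mu$-basis and a minimal-degree B\'ezout vector. First I would build the last $n-1$ columns: choose polynomial vectors $\uu_1,\ldots,\uu_{n-1}$ with $\deg(\uu_i)=\mu_i$ whose leading vectors $LV(\uu_1),\ldots,LV(\uu_{n-1})$ span a fixed hyperplane $H\subset\k^n$ (e.g. $e_2^\perp\cap\cdots$, or more simply take $LV(\uu_i)=e_{i+1}$ after a suitable reindexing), and pick a B\'ezout candidate $\bb$ with $\deg(\bb)=j$ whose leading vector is the remaining basis vector $e_1$, arranged so that $[\bb,\uu_1,\ldots,\uu_{n-1}]$ has constant nonzero determinant. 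Because $\deg(\bb)=j<\mu_{n-1}=\max_i\deg(\uu_i)$, the degree constraint from Theorem~\ref{bez-mu} is satisfied, so there is no a priori obstruction. Then I would \emph{define} $\aa$ to be the first row of the inverse of $P:=[\bb,\uu_1,\ldots,\uu_{n-1}]$; since $|P|\in\k^\ast$, this first row is a polynomial row vector, and by construction $\aa P=[1,0,\ldots,0]$, so $\gcd(\aa)=1$, the $\uu_i$ generate $\syz(\aa)$, and $\bb$ is a B\'ezout vector of $\aa$.

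The key steps, in order, are: (1) choose the concrete $\uu_i$ and $\bb$ with the prescribed degrees and leading vectors and verify $|P|$ is a nonzero constant — this can be made automatic by taking each $\uu_i = s^{\mu_i}e_{i+1} + (\text{lower-order correction})$ and $\bb = s^j e_1 + (\text{correction})$, so that $P$ is upper-triangular-plus-lower-order, hence has constant determinant equal to the product of leading coefficients; (2) set $\aa = (P^{-1})_{1\ast}$ and record $\aa P=[1,0,\ldots,0]$; (3) invoke Proposition~\ref{prop-mfbasis} (or directly the relation $\aa P = [1,0,\ldots,0]$) to conclude $\uu_1,\ldots,\uu_{n-1}$ is a basis of $\syz(\aa)$, and invoke Definition~\ref{def-mb} plus independence of the $LV(\uu_i)$ to conclude it is a $\mu$-basis; by the uniqueness of the $\mu$-type (Proposition~\ref{prop-equiv-def}), \emph{every} $\mu$-basis of $\aa$ then has degrees $\mu_1,\ldots,\mu_{n-1}$; (4) show $\bb$ is a \emph{minimal}-degree B\'ezout vector — this is where Theorem~\ref{bez-mu} enters in the reverse direction: any B\'ezout vector $\bb'$ of $\aa$ has $\deg(\bb')<\mu_{n-1}$, but I need the sharper statement that $\deg(\bb')\ge j$ cannot be improved; (5) conclude that the minimal B\'ezout degree $\beta$ of $\aa$ equals $j$.

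The main obstacle is step (4): producing $\uu_i$'s and a $\bb$ of degree exactly $j$ is easy, but I must rule out the existence of a B\'ezout vector of degree \emph{strictly less than} $j$. The slick way is to choose the correction terms in $\bb$ (the coefficients of $s^{j-1},\ldots,s^0$ in $\bb$) generically, or to make a clever structural choice, so that $\bb$ cannot be reduced modulo $\syz(\aa)$: concretely, I would exploit that $\bb' - \bb \in \syz(\aa) = \bigoplus_i \k[s]\uu_i$ for any other B\'ezout vector $\bb'$, so $\bb' = \bb - \sum f_i\uu_i$; then I must show that no choice of the $f_i$ drops the degree below $j$. Arranging the $\uu_i$ so that $\mu_1=\cdots=\mu_{n-1}=\mu_{n-1}$ when possible, or more carefully choosing $\bb$ so that its component in the "$s^j$ direction" cannot be cancelled by any $\sum f_i\uu_i$ of degree $\le j$ (a finite-dimensional linear-algebra condition over $\k$ that holds for generic choice of the lower coefficients of $\bb$), will give $\deg(\bb')\ge j$. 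If $\k$ is too small for a genericity argument (finite fields), I would instead give an explicit choice: e.g. build the "transition matrix" from $\{\uu_i\}$ in a coordinate system adapted to the filtration by degree so that the reduction of $\bb$ modulo $\syz(\aa)$ is forced to keep degree $\ge j$, which amounts to checking a single rank condition on a fixed matrix over $\k$ that I control by construction. The remaining steps (1)–(3) and (5) are routine given the results already available in the excerpt.
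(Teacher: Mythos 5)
Your overall strategy (build $P=[\bb,\uu_1,\dots,\uu_{n-1}]$ with $|P|\in\k^\ast$ and read $\aa$ off the first row of $P^{-1}$) is the same reverse-engineering idea the paper uses for $n\ge 3$, but two of your concrete steps do not work as stated. First, in step (1) you claim that taking $LV(\uu_i)=e_{i+1}$ and $LV(\bb)=e_1$ (``the remaining basis vector'') automatically yields a constant determinant. It yields the opposite: if the $n$ leading vectors of the columns of $P$ are linearly independent, then $\deg|P|=j+\mu_1+\cdots+\mu_{n-1}>0$, so $P\notin GL_n(\k[s])$. For a moving frame at a non-constant $\aa$, all $n$ leading column vectors must lie in the hyperplane $LV(\aa)^{\perp}$ (this is exactly the argument in the proof of Theorem~\ref{bez-mu}), so in particular $LV(\bb)$ must lie in the span of the $LV(\uu_i)$, not complete it. The cancellation making $|P|$ constant has to be engineered; the paper does this with a bidiagonal-plus-corner structure whose determinant telescopes to $\pm1$.

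Second, and more seriously, your mechanism for step (4) cannot succeed. You propose to choose the lower-order coefficients of $\bb$ generically so that $\bb$ ``cannot be reduced modulo $\syz(\aa)$.'' But once the $\uu_i$ are fixed (and pointwise independent), they determine $\aa$ up to a nonzero constant, hence they determine the \emph{entire} coset $\Bez(\aa)=\bb+\syz(\aa)$ and therefore the minimal B\'ezout degree; replacing $\bb$ by any other admissible first column leaves the first row of $P^{-1}$ unchanged up to a constant factor. So perturbing $\bb$ gives you no control whatsoever over $\beta$ — the only freedom that matters sits in the lower-order terms of the $\uu_i$ (equivalently, of $\aa$). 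Since this is exactly the step where the actual content of the proposition lives, the proposal has a genuine gap: no witness is exhibited and no argument excludes a B\'ezout vector of degree $<j$. The paper closes this gap by choosing explicit monomial components $a_i=s^{\mu_{n-1}-j+\mu_1+\cdots+\mu_{i-1}}$ for $i<n$ and $a_n=s^{\mu_1+\cdots+\mu_{n-1}}+1$, and then proving minimality directly: any $\mathbf f$ with $\aa\,\mathbf f=1$ and $\deg(\mathbf f)<j$ must satisfy $f_n(0)=1$ (evaluate at $s=0$), so $f_na_n$ contributes a term of degree $\ge\mu_1+\cdots+\mu_{n-1}$ that the remaining $f_ia_i$ are too low in degree to cancel. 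Some such explicit degree bookkeeping (or an equivalent rank computation that you would still have to verify) is unavoidable; the genericity-in-$\bb$ shortcut is not available.
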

\begin{proof}
{
In the case when $n=2$, given a non-negative integer $\mu_1$ and an integer $j\in \{0,\dots, \mu_1-1\}$, take $\aa = \left[s^{\mu_1-j}, s^{\mu_1}+1\right]$. Then,  obviously $\gcd(\aa)=1$,    vector 
$\mathbf b=[-s^j,1]^T$ is a minimal-degree \bez vector,  and vector  ${\mathbf u}_1=\left[s^{\mu_1}+1, -s^{\mu_1-j}\right]^T$ is the minimal-degree syzygy, which in this case comprises a $\mu$-basis of $\aa$.
Thus $\aa$ has the required properties.

In the case when $n\ge3$,  for the set of integers $\mu_1,\dots, \mu_{n-1},j$ described in the proposition,   take 
$$
 \aa = \left[s^{\mu_{n-1}-j},s^{\mu_{n-1}-j+\mu_1},s^{\mu_{n-1}-j+\mu_1+\mu_2},\ldots,s^{\mu_{n-1}-j+\mu_1+\cdots+\mu_{n-2}},s^{\mu_1+\cdots+\mu_{n-1}}+1\right]. 
$$
Observe that $\gcd(\aa)=1$, and consider the matrix
$$
\mf= \left[\begin{array}{cccccc}
 & s^{\mu_1} & & & 1\\
 & -1 & s^{\mu_2}& &\\
 & & -1 & \ddots &\\
-s^{j} & & & \ddots & s^{\mu_{n-1}}\\
 1 & & & & -s^{\mu_{n-1}-j}
 \end{array}\right].
 $$
It is easy to see that  $\aa P = [1,0,\ldots,0]$ and $|P| = \pm 1$, so $P$ is a moving frame at $\aa$ according to Definition \ref{def-mf}. Therefore, the first column of $P$, i.e vector $\mathbf b=\mf_{*1}$, is a \bez vector of $\aa$, while the remaining columns $\mathbf u_1=P_{*2},\dots,  \mathbf u_{n-1}=P_{*n}$ comprise a basis for the syzygy module of $\aa$   according to Proposition~\ref{prop-mfbasis}.  Clearly $\deg(\bb)=j$, while $\deg \uu_i=\mu_i$ for $i=1,\dots, n-1$.

The leading vectors of $\uu_1,\dots,\uu_{n-1}$ are linearly independent and, therefore,  vectors $\uu_1,\dots,\uu_{n-1}$  comprise  a $\mu$-basis of $\aa$.  To prove that $\bb$ is of minimal degree, suppose, for the sake of contradiction, that there exists a vector $\mathbf{f} = \left[f_1,\ldots,f_n\right]^T \in \mathbb{K}[s]^n$ with $\deg(\mathbf{f})<j$ such that
\beq\label{eq-af}
f_1(s)\,a_1(s)+\ldots+f_n(s)\,a_n(s) = 1 \text{ for all } s.
\eeq
We observe that, since $\mu_{n-1}>0$ and $j<\mu_{n-1}$, then $a_i(0) = 0$ for $i=1,\ldots,n-1$ and $a_n(0)=1$.  Then, by substituting $s=0$ in \eqref{eq-af}, we get  $f_n(0)=1$ and, therefore, $f_n(s)$ is not a zero polynomial.  This implies that   $\deg(f_na_n) = \mu_1+\cdots+\mu_{n-1}+\deg(f_n)$. Therefore, in order for the B\'ezout identity  \eqref{eq-af} to hold, at least one of the remaining $f_ia_i$,  $i=1,\ldots,n-1$, must contain a monomial of degree $\mu_1+\cdots+\mu_{n-1}+\deg(f_n)$ as well. However, we assumed that  $\deg(f_i)<j$ for all $i$, which implies that  $\deg(f_ia_i) < \mu_1+\cdots+\mu_{n-1}$ for $i=1,\ldots,n-1$. Contradiction.
We thus conclude  that $\aa$ has the required properties.
}
\end{proof}

\section{Reduction to a linear algebra problem over $\mathbb{K}$}

\label{sect-la}
\renewcommand\footnotemark{}

In this section, we show that for a vector $\mathbf{a}\in\mathbb{K}[s]^{n}%
_{d}$ such that $\gcd(\mathbf{a})=1$, a {B\'ezout} vector of $\aa$ of minimal degree
and a $\mu$-basis of $\mathbf{a}$ can be obtained from linear relationships
among certain columns of a ${(2d+1)\times(nd+n+1)}$ matrix over $\mathbb{K}$. 
Since essentially the same matrix has been used to construct a $\mu$-basis in \cite{hhk2017}, we later use this result to develop a degree-optimal moving frame algorithm that simultaneously constructs a $\mu$-basis and a minimal {B\'ezout} vector.


\subsection{Sylvester-type matrix $A$ and its properties}

\label{ssect-A}
For a nonzero polynomial row vector
\begin{equation}
\label{eq-a}\mathbf{a }= \displaystyle{\sum_{0\leq i\leq d}[c_{i1}%
,\ldots,c_{in}]s^{i}}%
\end{equation}
of length $n$ and degree $d$, we correspond a $\mathbb{K}^{(2d+1)\times
n(d+1)}$ matrix
\beq
\label{eq-A}A=\left[
\begin{array}
[c]{cccccccccc}%
c_{01} & \cdots & c_{0n} &  &  &  &  &  &  & \\
\vdots & \cdots & \vdots & c_{01} & \cdots & c_{0n} &  &  &  & \\
\vdots & \cdots & \vdots & \vdots & \cdots & \vdots & \ddots &  &  & \\
c_{d1} & \cdots & c_{dn} & \vdots & \cdots & \vdots & \ddots & c_{01} & \cdots
& c_{0n}\\
&  &  & c_{d1} & \cdots & c_{dn} & \ddots & \vdots & \cdots & \vdots\\
&  &  &  &  &  & \ddots & \vdots & \cdots & \vdots\\
&  &  &  &  &  &  & c_{d1} & \cdots & c_{dn}%
\end{array}
\right]
\eeq
with the blank spaces filled by zeros. In other words, matrix $A$ is obtained
by taking $d+1$ copies of a $(d+1)\times n$ block of the coefficients of
polynomials in $\mathbf{a}$. The blocks are repeated horizontally from left to
right, and each block is shifted down by one relative to the previous one.
Matrix $A$ is related to the \emph{generalized resultant matrix} $R$,
appearing on page 333 of \cite{vard-1978}. Indeed, if one takes the
top-left $\mathbb{K}^{2d\times nd}$ submatrix of $A$, transposes this
submatrix, and then permutes certain rows, one obtains $R$. However, the size
and shape of the matrix $A$ turns out to be crucial to our construction.
\begin{example}\label{ex-A}
\textrm{For the row vector $\mathbf{a}$ in the running example
(Example~\ref{ex-prob}), we have $n=3$, $d=4$,
\[
c_{0}=[2,3,6],\, c_{1}=[1,0,0],\,c_{2}=[0,1,0],\,c_{3}=[0,0,2],\,c_{4}=[1,1,1]
\]
and
\[
A=\left[
\begin{array}
[c]{rrrrrrrrrrrrrrr}%
2 & 3 & 6 &  &  &  &  &  &  &  &  &  &  &  & \\
1 & 0 & 0 & 2 & 3 & 6 &  &  &  &  &  &  &  &  & \\
0 & 1 & 0 & 1 & 0 & 0 & 2 & 3 & 6 &  &  &  &  &  & \\
0 & 0 & 2 & 0 & 1 & 0 & 1 & 0 & 0 & 2 & 3 & 6 &  &  & \\
1 & 1 & 1 & 0 & 0 & 2 & 0 & 1 & 0 & 1 & 0 & 0 & 2 & 3 & 6\\
&  &  & 1 & 1 & 1 & 0 & 0 & 2 & 0 & 1 & 0 & 1 & 0 & 0\\
&  &  &  &  &  & 1 & 1 & 1 & 0 & 0 & 2 & 0 & 1 & 0\\
&  &  &  &  &  &  &  &  & 1 & 1 & 1 & 0 & 0 & 2\\
&  &  &  &  &  &  &  &  &  &  &  & 1 & 1 & 1
\end{array}
\right] .
\]}
\end{example}

A visual periodicity of the matrix $A$ is reflected in the periodicity
property of its non-pivotal columns which we are going to precisely define and
exploit below. We remind readers the of the definition of pivotal and non-pivotal columns.

\begin{definition}
A column of any matrix $N$ is called \emph{pivotal} if it is either the first
column and is nonzero or it is linearly independent of all previous columns.
The rest of the columns of $N$ are called \emph{non-pivotal}. The index of a
pivotal (non-pivotal) column is called a \emph{pivotal (non-pivotal) index}.
\end{definition}

From this definition, it follows that every non-pivotal column can be written
as a linear combination of the preceding \emph{pivotal columns}.

We denote the set of pivotal indices of $A$ as $p$ and the set of its
non-pivotal indices as $q$. The following two lemmas, proved in \cite{hhk2017}
(Lemma 17, 19) show how the specific structure of the matrix $A$ is reflected
in the structure of the set of non-pivotal indices~$q$.

\begin{lemma}
[Periodicity]\label{periodic}If $j\in q$ then $j+kn\in q$ for $0\leq
k\leq\left\lfloor \frac{n(d+1)-j}{n}\right\rfloor $. Moreover,
\begin{equation}
\label{eq-periodic}A_{*j}=\sum_{ r<j} \alpha_{r}\, A_{*r}\quad\Longrightarrow
\quad A_{*j+kn}=\sum_{ r<j} \alpha_{r}\, A_{*r+kn},
\end{equation}
where $A_{*j}$ denotes the $j$-th column of $A$.
\end{lemma}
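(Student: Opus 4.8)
The plan is to exploit the block structure of $A$ directly. The matrix $A$ is built from $d+1$ copies of the $(d+1)\times n$ coefficient block $C$ (whose $(i+1)$-th row is $[c_{i1},\dots,c_{in}]$), with each successive copy shifted down by one row. The key structural observation I would first record precisely: if we delete the first $n$ columns of $A$ and simultaneously delete the first row of $A$, the resulting $(2d)\times(nd)$ matrix is itself of the same shape, namely $d$ copies of the block $C$ shifted down by one; more usefully, columns $n+1,\dots,n(d+1)$ of $A$ agree with columns $1,\dots,nd$ of $A$ after a downward shift by one row and with zeros filled into the newly exposed top row. In other words, writing $S$ for the downward-shift-by-one map on $\mathbb{K}^{2d+1}$ (which sends $(v_1,\dots,v_{2d+1})$ to $(0,v_1,\dots,v_{2d})$), we have $A_{*(j+n)} = S(A_{*j})$ for all $j$ with $1\le j \le nd$. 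This is immediate from the explicit form \eqref{eq-A}, and it is really the engine of the lemma.

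Given this, I would prove both assertions by induction on $k$, treating the case $k=1$ and then noting the general step is identical in form. Suppose $j\in q$, so $A_{*j}=\sum_{r<j}\alpha_r A_{*r}$ for some scalars $\alpha_r\in\mathbb{K}$. Apply the shift map $S$ to both sides; since $S$ is linear, $S(A_{*j})=\sum_{r<j}\alpha_r S(A_{*r})$. Provided $j+n \le n(d+1)$ — which is exactly the range $0\le k\le \lfloor (n(d+1)-j)/n\rfloor$ guarantees for $k=1$ — we have $j\le nd$ and each $r<j$ also satisfies $r\le nd$, so $S(A_{*j})=A_{*(j+n)}$ and $S(A_{*r})=A_{*(r+n)}$ for every $r$ appearing. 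Hence $A_{*(j+n)}=\sum_{r<j}\alpha_r A_{*(r+n)}$, which is \eqref{eq-periodic} for $k=1$ after reindexing the summation. This simultaneously shows $A_{*(j+n)}$ is a linear combination of strictly earlier columns, hence $j+n\in q$. Iterating $k$ times (equivalently, applying $S^k$, valid as long as $j+kn\le n(d+1)$, i.e.\ $k\le \lfloor (n(d+1)-j)/n\rfloor$) gives the full statement.

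One subtlety I would be careful to address: the displayed implication \eqref{eq-periodic} quantifies over a \emph{fixed} linear dependence $A_{*j}=\sum_{r<j}\alpha_r A_{*r}$, and concludes the shifted dependence holds with the \emph{same} coefficients. The shift argument delivers exactly this, so there is nothing to worry about there; but one must check that the relation $S(A_{*r})=A_{*(r+n)}$ is valid for \emph{all} indices $r<j$ that occur, not merely the pivotal ones — this is fine because $r<j\le nd$ forces $r\le nd$, which is precisely the range in which the shift identity $A_{*(r+n)}=S(A_{*r})$ holds. The only genuine content beyond bookkeeping is verifying the shift identity $A_{*(j+n)}=S(A_{*j})$ from \eqref{eq-A}, and checking that the index range stated in the lemma is exactly the range in which $j+kn$ is still a legitimate column index of $A$; both are routine. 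I do not anticipate a real obstacle — the main care is in matching the index ranges so that every column referenced genuinely lives inside $A$, and in phrasing the induction cleanly so that the "moreover" part (preservation of the explicit coefficients) is visibly a consequence of linearity of $S$ rather than something needing separate justification.
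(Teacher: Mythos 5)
Your proof is correct. Note that this paper does not actually prove the lemma itself --- it cites Lemmas 17 and 19 of \cite{hhk2017} --- so there is no in-text argument to compare against; but your down-shift-operator argument is precisely the structural periodicity of $A$ that the cited reference exploits. The two points you flag as needing care are indeed the only ones: that $A_{*(j+n)}=S(A_{*j})$ holds for all $j\le nd$ (which also requires, as is easily checked, that the last entry of $A_{*j}$ vanishes for such $j$, so the shift loses no information), and that every index $r<j$ occurring in the dependence relation also satisfies $r\le nd$ so the same identity applies to it; both check out, and linearity of $S$ then gives \eqref{eq-periodic} with the same coefficients, as required.
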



\begin{definition}
\label{def-bnp} Let $q$ be the set of non-pivotal indices. Let $q/(n)$ denote
the set of equivalence classes of $q \text{ modulo } n$. Then the set $\tilde
q=\{ \min\varrho\,| \varrho\in q/(n)\}$ will be called the set of \emph{basic
non-pivotal indices}. The remaining indices in $q$ will be called \emph{periodic non-pivotal indices}.
\end{definition}



\begin{example}
\textrm{\label{ex-period} For the matrix $A$ in Example~\ref{ex-A}, we have
$n=3$ and $q=\{ 8,9,11,12,14,15\}$.  Then $q/(n)=\big\{ \{8,11,14\},\,\{9,12,15\} \}\big\}$ and $\tilde q=\{8,9\}$. }
\end{example}


\begin{lemma}
\label{lem-card} There are exactly $n-1$ basic non-pivotal indices: $|\tilde
q|=n-1$.
\end{lemma}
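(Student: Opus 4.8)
The statement $|\tilde q| = n-1$ says that among all non-pivotal indices of the $(2d+1)\times n(d+1)$ matrix $A$, exactly $n-1$ fall into distinct residue classes modulo $n$. The plan is to count pivotal columns of $A$ and subtract from the total, taking care to understand how pivotal and non-pivotal indices distribute across the $n$ residue classes modulo $n$. The key structural input is the periodicity Lemma~\ref{periodic}: once an index $j$ is non-pivotal, so is $j+n$ (as long as it stays in range), which means that within each residue class modulo $n$, the non-pivotal indices form a terminal segment; equivalently, the pivotal indices in each class form an initial segment. So for residue class $c$ the columns with indices $c, c+n, c+2n, \dots$ are pivotal up to some threshold and non-pivotal thereafter, and the number of basic non-pivotal indices is precisely the number of residue classes $c$ that contain at least one non-pivotal index.

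\textbf{Key steps.} First I would recall that $\rank(A) = |p|$, the number of pivotal columns, and that $A$ has $2d+1$ rows, so $\rank(A) \le 2d+1$; in fact since $\gcd(\aa)=1$ (or more generally the relevant full-rank condition used in \cite{hhk2017}) one shows $\rank(A) = 2d+1$, i.e.\ the rows of $A$ are linearly independent. Hence $|p| = 2d+1$ and $|q| = n(d+1) - (2d+1) = nd + n - 2d - 1 = (n-2)d + (n-1)$. Second, I would use periodicity to argue that each of the $n$ residue classes modulo $n$ contains $d+1$ column indices, and that a residue class contains a non-pivotal index if and only if it is ``saturated'' late enough; more precisely, if $m_c$ denotes the number of pivotal indices in residue class $c$, then the number of non-pivotal indices in that class is $(d+1) - m_c$ and the class contributes to $\tilde q$ exactly when $m_c \le d$, i.e.\ $m_c < d+1$. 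Summing, $|q| = \sum_{c} \big((d+1) - m_c\big) = n(d+1) - |p| = (n-2)d + (n-1)$. Third, and this is the crux, I would bound $m_c$: one shows $m_c \ge d$ for every class (so at most one non-pivotal index per class would be wrong — rather, $d+1 - m_c \le 1$... let me reconsider) — actually the right statement is that $m_c$ takes only the values $d$ or $d+1$, equivalently the leftmost $n$ columns (indices $1,\dots,n$, one per residue class) together with the ``periodic shifts'' fill up rank in a balanced way. Given $\sum_c m_c = 2d+1$ and $m_c \in \{d, d+1\}$ for each of $n$ classes, the only solution is that $n-1$ of the classes have $m_c = d$ and one class has $m_c = d+1$; then each of the $n-1$ classes with $m_c = d$ contributes exactly one basic non-pivotal index and the remaining class contributes none, giving $|\tilde q| = n-1$.

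\textbf{Main obstacle.} The delicate point is establishing the bound $m_c \in \{d, d+1\}$ for each residue class $c$ — equivalently, that in each residue class modulo $n$ at most one column is non-pivotal (so the ``terminal non-pivotal segment'' has length $0$ or $1$). This should follow from examining the block structure of $A$ in \eqref{eq-A}: the last block of $d+1$ new rows introduced by the final shifted copy means that columns indexed $n d + 1, \dots, n(d+1)$ (the last $n$ columns) behave specially, and periodicity propagates non-pivotality backward/forward consistently. I would likely prove it by a dimension/interlacing argument: the submatrix of $A$ on columns in a single residue class has a banded Toeplitz-like structure of full rank $\min(d+1, \text{something})$, forcing at most one linear dependency per class. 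Alternatively — and this is probably the cleanest route given that \cite{hhk2017} already did much of this — I would cite or lightly re-derive from Lemma~\ref{periodic} and the rank computation that the non-pivotal indices, being $(n-2)d + (n-1)$ in number and closed under adding $n$ within range, must occupy exactly $n-1$ residue classes, using the arithmetic identity $(n-2)d + (n-1) = (n-1)d + (n-1) - d = (n-1)(d+1) - d$, which says: distribute $(n-1)(d+1)-d$ non-pivotal indices into residue classes as terminal segments of a length-$(d+1)$ class each; if fewer than $n-1$ classes were used, the total would be at most $(n-2)(d+1) = (n-2)d + (n-2) < (n-2)d + (n-1)$, a contradiction, so at least $n-1$ classes are used; and if all $n$ classes were used, each terminal segment has length $\ge 1$, forcing $|q| \ge n$, but also each pivotal initial segment has length $\ge 1$ (since column $1,\dots,n$... here one needs columns $1,\dots,n$ pivotal, which holds because the first $n$ columns of $A$ are $c_0^T$ padded, hence a single nonzero row $[c_{01},\dots,c_{0n}]$ followed by zeros — wait, that has rank $1$, so actually only one of the first $n$ columns is pivotal). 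This shows the ``all $n$ classes'' case needs a genuinely separate argument, which is exactly where I expect to spend the most care: pinning down that precisely one residue class is entirely pivotal. I would resolve it by identifying that distinguished class explicitly using $\rank(A) = 2d+1$ and the fact that $A$ has exactly $2d+1$ rows, so $|p| = 2d+1$ and $\sum_c m_c = 2d+1$ with each $m_c \le d+1$; since $n(d+1) - (2d+1) = (n-2)(d+1) + 1$ when... hmm, $(n-2)(d+1)+1 = (n-2)d + (n-2) + 1 = (n-2)d + (n-1)$, yes, so $|q| = (n-2)(d+1) + 1$, meaning the non-pivotal terminal segments have lengths summing to $(n-2)(d+1)+1$; the maximum possible with $n-1$ classes is $(n-1)(d+1)$ and with $n-2$ classes is $(n-2)(d+1) < (n-2)(d+1)+1$, so at least $n-1$ classes, and a short argument (one pivotal column forces one class with $m_c = d+1$, hence that class has empty non-pivotal segment) pins it at exactly $n-1$.
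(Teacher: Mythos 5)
The paper does not prove this lemma in-text (it cites Lemmas 17 and 19 of \cite{hhk2017}), so the comparison is against the standard argument. Your overall strategy is the right one: use Lemma~\ref{periodic} to see that within each residue class modulo $n$ the non-pivotal indices form a terminal segment, count $|q| = n(d+1)-\rank(A) = (n-2)(d+1)+1$ using $\rank(A)=2d+1$, and conclude by pigeonhole that at least $n-1$ of the $n$ classes (each of size $d+1$) must contain a non-pivotal index, so $|\tilde q|\ge n-1$. That half of your argument is correct. However, there are two genuine problems. First, your proposed ``crux'' claim that $m_c\in\{d,d+1\}$ for every residue class is false, and the paper's own running example refutes it: for $n=3$, $d=4$ one has $q=\{8,9,11,12,14,15\}$, so the three classes have $m_c=5,2,2$; moreover your conclusion ``$n-1$ classes have $m_c=d$ and one has $m_c=d+1$'' would give $\sum_c m_c = nd+1$, which equals $2d+1$ only when $n=2$. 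You half-retract this, but the false claim remains load-bearing in the ``Key steps'' paragraph.

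Second, and more importantly, the upper bound $|\tilde q|\le n-1$ --- equivalently, that at least one residue class is \emph{entirely} pivotal --- is exactly the part that cannot be obtained by arithmetic (the count $(n-2)(d+1)+1$ is perfectly consistent with all $n$ classes containing non-pivotal indices, e.g.\ terminal segments of lengths $2,2,2$ when $n=3$, $d=4$), and you leave it as an unproved assertion (``a short argument \dots pins it''). The missing argument is structural: row $2d+1$ of $A$ is supported only on the last block (columns $nd+1,\dots,n(d+1)$), where it equals $[c_{d1},\dots,c_{dn}]\neq 0$ since $\deg(\aa)=d$. Taking $j$ minimal with $c_{dj}\neq 0$, the column $A_{*(nd+j)}$ has a nonzero entry in row $2d+1$ while every earlier column has a zero there, so $A_{*(nd+j)}$ is pivotal. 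By the contrapositive of Lemma~\ref{periodic}, if any earlier index $j+kn$ in that residue class were non-pivotal then $nd+j$ would be too; hence the entire class of $j$ is pivotal and at most $n-1$ classes meet $q$. (Your parenthetical aside about the first $n$ columns is also mistaken --- the first block of $A$ is the full $(d+1)\times n$ coefficient block, not the single row $c_0$, so several of the first $n$ columns are typically pivotal --- but that aside is not load-bearing.) As written, the proposal establishes only the inequality $|\tilde q|\ge n-1$.
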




\subsection{Isomorphism between $\mathbb{K}[s]_{t}^{m}$ and $\mathbb{K}%
^{m(t+1)}$}

\label{ssect-iso}
The second ingredient that we use to reduce our problem to a linear algebra
problem over $\mathbb{K}$ is an explicit isomorphism  between vector spaces
$\mathbb{K}[s]_{t}^{m}$ and $\mathbb{K}^{m(t+1)}$.  Any polynomial $m$-vector
$\mathbf{h}$ of degree at most $t$ can be written as $\mathbf{h}=w_{0}+s
w_{1}+\dots+s^{t} w_{t}$ where $w_{i}=[w_{1i},\dots, w_{mi}]^{T}\in
\mathbb{K}^{m}$. It is clear that the map
\[
\sharp^{m}_{t}\colon\mathbb{K}[s]_{t}^{m}\to\mathbb{K}^{m(t+1)}%
\]
\begin{equation}
\label{iso1}\mathbf{h}\to\mathbf{h}^{\sharp^{m}_{t}}= \left[
\begin{array}
[c]{c}%
w_{0}\\
\vdots\\
w_{t}%
\end{array}
\right]
\end{equation}
is linear. It is easy to check that the inverse of this map
\[
\flat^{m}_{t}\colon\mathbb{K}^{m(t+1)} \to\mathbb{K}[s]_{t}^{m}%
\]
is given by a linear map:
\begin{equation}
\label{iso2}v\to v^{\flat^{m}_{t}} =S^{m}_{t}\, v
\end{equation}
where
\[
S^{m}_{t}=\left[
\begin{array}
[c]{ccc}%
I_{m} & sI_{m} & \cdots s^{t}I_{m}%
\end{array}
\right]  \in\mathbb{K}[s]^{m\times m(t+1)} .
\]
Here $I_{m}$ denotes the $m\times m$ identity matrix. {For the sake of
notational simplicity, we will often write $\sharp$, $\flat$ and $S$ instead
of $\sharp^{m}_{t}$, $\flat^{m}_{t}$ and~$S^{m}_{t}$ when the values of $m$
and $t$ are clear from the context.}


\begin{example}
\label{ex-sharp-flat}\textrm{For $\mathbf{h}\in\mathbb{Q}^{3}_{3}[s]$ given
by
\[
\mathbf{h}=\left[
\begin{array}
[c]{c}%
9-12s-s^{2}\\
8+15s\\
-7-5s+s^2
\end{array}
\right] = \left[
\begin{array}
[c]{r}%
9\\
8\\
-7
\end{array}
\right] +s\,\left[
\begin{array}
[c]{r}%
-12\\
15\\
-5
\end{array}
\right]  + s^{2}\,\left[
\begin{array}
[c]{r}%
-1\\
0\\
1
\end{array}
\right] ,
\]
we have
\[
\mathbf{h}^{\sharp}=[9,\,8,\,-7,\,-12,\,15,\,-5,\,-1,\,0,\,1]^{T}.
\]
Note that
\[
\mathbf{h}=(\mathbf{h}^{\sharp})^{\flat}=S\, \mathbf{h}^{\sharp}=\left[
\begin{array}
[c]{ccc}%
I_{3} & sI_{3} & s^{2}I_{3}
\end{array}
\right]  \mathbf{h}^{\sharp}.
\]
}
\end{example}

With respect to the isomorphisms $\sharp$ and $\flat$, the $\mathbb{K}$-linear
map $\mathbf{a}\colon\mathbb{K}[s]_{d}^{n}\to\mathbb{K}[s]_{2d}$ corresponds
to the $\mathbb{K}$ linear map $A\colon\mathbb{K}^{n(d+1)}\to\mathbb{K}%
^{2d+1}$ in the following sense:

\begin{lemma}
\label{lem-aA}  Let $\mathbf{a}=\displaystyle{\sum_{0\leq j\leq d}c_{j}s^{j}}
\in\mathbb{K}_{d}^{n}[s]$ and $A \in\mathbb{K}^{(2d+1)\times n(d+1)}$ defined
as in \eqref{eq-A}. Then for any $v\in\mathbb{K}^{n(d+1)}$ and any
$\mathbf{h}\in\mathbb{K}[s]^{n}_{d}$:
\begin{equation}
\label{eq-aA}\mathbf{a}\,v^{\flat^{n}_{d}}=(A v)^{\flat^{1}_{2d}} \text{ and }
(\mathbf{a}\, \mathbf{h})^{\sharp^{1}_{2d}}=A \mathbf{h}^{\sharp^{n}_{d}}.
\end{equation}

\end{lemma}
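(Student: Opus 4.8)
\textbf{Proof plan for Lemma~\ref{lem-aA}.}

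The plan is to verify both identities by a direct computation, using linearity to reduce to a computation with monomials. I would start by recalling the shapes of all the maps involved. We have $\mathbf{a} = \sum_{0\le j\le d} c_j s^j$ with $c_j = [c_{j1},\dots,c_{jn}] \in \mathbb{K}^{1\times n}$, a vector $\mathbf{h} = \sum_{0\le i\le d} w_i s^i \in \mathbb{K}[s]^n_d$ with $w_i \in \mathbb{K}^n$, so that $\mathbf{h}^{\sharp^n_d} = [w_0^T,\dots,w_d^T]^T \in \mathbb{K}^{n(d+1)}$. The product $\mathbf{a}\,\mathbf{h} = \sum_{k=0}^{2d} \bigl(\sum_{i+j=k} c_j w_i\bigr) s^k \in \mathbb{K}[s]_{2d}$, so $(\mathbf{a}\,\mathbf{h})^{\sharp^1_{2d}}$ is the column vector whose $k$-th entry is $\sum_{i+j=k} c_j w_i$. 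On the other hand, from the explicit form \eqref{eq-A} of $A$, the $k$-th row of $A$ (for $k=0,\dots,2d$) consists of the blocks $c_k, c_{k-1}, \dots$ placed in block-columns $0,1,\dots$ (with $c_\ell = 0$ for $\ell<0$ or $\ell>d$), so the $k$-th entry of $A\,\mathbf{h}^{\sharp^n_d}$ is precisely $\sum_{i} c_{k-i} w_i = \sum_{i+j=k} c_j w_i$. This establishes the second identity $(\mathbf{a}\,\mathbf{h})^{\sharp^1_{2d}} = A\,\mathbf{h}^{\sharp^n_d}$.

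For the first identity, the cleanest route is to deduce it from the second by using that $\flat^n_d$ and $\sharp^n_d$ are mutually inverse (and likewise $\flat^1_{2d}$, $\sharp^1_{2d}$), as established in Section~\ref{ssect-iso}. Given an arbitrary $v \in \mathbb{K}^{n(d+1)}$, set $\mathbf{h} = v^{\flat^n_d} \in \mathbb{K}[s]^n_d$, so that $\mathbf{h}^{\sharp^n_d} = v$. Then applying the second identity and then $\flat^1_{2d}$ to both sides gives
\[
\bigl(\mathbf{a}\,\mathbf{h}\bigr) \;=\; (\mathbf{a}\,\mathbf{h})^{\sharp^1_{2d}\,\flat^1_{2d}} \;=\; (A\,\mathbf{h}^{\sharp^n_d})^{\flat^1_{2d}} \;=\; (A v)^{\flat^1_{2d}},
\]
where the first equality uses that $\mathbf{a}\,\mathbf{h}$ has degree at most $2d$ so that $\sharp^1_{2d}$ followed by $\flat^1_{2d}$ returns it. Since $\mathbf{h} = v^{\flat^n_d}$, the left-hand side is $\mathbf{a}\,v^{\flat^n_d}$, yielding the first identity.

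There is no real obstacle here; the only thing requiring care is bookkeeping the index shifts in the definition \eqref{eq-A} of $A$ — i.e., confirming that the block in row-block $k$ and column-block $i$ of $A$ is exactly $c_{k-i}$ (with the convention that $c_\ell = 0$ outside $0\le \ell\le d$) — so that the row-times-vector computation reproduces the Cauchy-product coefficients $\sum_{i+j=k} c_j w_i$ of $\mathbf{a}\,\mathbf{h}$. Once the indexing is pinned down, both identities follow immediately, and it suffices to prove one of them directly (the second) and transport it across the isomorphisms to get the other.
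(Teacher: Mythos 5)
Your proof is correct and follows essentially the same route as the paper: one identity is verified by a direct coefficient computation (matching the rows of $A$ against the Cauchy-product coefficients of $\mathbf{a}\,\mathbf{h}$), and the other is obtained by transporting it across the mutually inverse maps $\sharp$ and $\flat$. The only cosmetic difference is that the paper verifies the first identity directly (citing Lemma 10 of \cite{hhk2017}) and deduces the second, whereas you do the reverse; both directions are equally valid.
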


The proof of Lemma~\ref{lem-aA} is straightforward. The proof of the first
equality is explicitly spelled out in \cite{hhk2017} (see Lemma 10). The
second equality follows from the first and the fact that $\flat^{m}_{t}$
and $\sharp^{m}_{t}$ are mutually inverse maps.

\begin{example}
\label{ex-Aa} \textrm{Consider the row vector $\mathbf{a}$ in the running example
(Example~\ref{ex-prob}) and its associated matrix $A$ (Example \ref{ex-A}).
Let $v=[1,2,3,4,5,6,7,8,9,10,11,12,13,14,15]^{T}$. Then
\[
Av= [26,60,98,143,194,57,62,63,42]^{T}%
\]
and so
\[
(Av)^{\flat_{2d}^{1}} = S_{8}^{1}(Av) = 26 + 60s + 98s^{2} + 143s^{3} + 194s^{4}
+ 57s^{5} + 62s^{6} + 63s^{7} + 42s^{8}.
\]
On the other hand, since
\[
v^{\flat^{n}_{d}} = S_{4}^{3}v = \left[
\begin{array}
[c]{c}%
1+4s+7s^{2}+10s^{3}+13s^{4}\\
2+5s+8s^{2}+11s^{3}+14s^{4}\\
3+6s+9s^{2}+12s^{3}+15s^{4}%
\end{array}
\right] ,
\]
we have
\begin{align*}
\mathbf{a }v^{\flat^{n}_{d}}  & = \left[
\begin{array}
[c]{ccc}%
2+s+s^{4} & 3+s^{2}+s^{4} & 6+2s^3+s^{4}%
\end{array}
\right]  \left[
\begin{array}
[c]{c}%
1+4s+7s^{2}+10s^{3}+13s^{4}\\
2+5s+8s^{2}+11s^{3}+14s^{4}\\
3+6s+9s^{2}+12s^{3}+15s^{4}%
\end{array}
\right] \\
& = 42s^{8}+63s^{7}+62s^{6}+57s^{5}+194s^{4}+143s^{3}+98s^{2}+60s+26.
\end{align*}
We observe that
\[
\mathbf{a }v^{\flat^{n}_{d}}=(Av)^{\flat^{1}_{2d}}.
\]
}
\end{example}
We proceed  by showing that if $\gcd(\aa)=1$, then  the matrix $A$ has full rank.  This statement  can be
deduced from  the results about the rank of a different Sylvester-type matrix, $R$, given in Section 2 of
\cite{vard-1978}. We, however,  give a short independent proof using the following lemma, which also will be used in other parts of the paper.
\begin{lemma}
\label{pre-rank} For all $\aa \in \mathbb{K}[s]^{n}$ with
$\gcd(\mathbf{a})=1$ and $\deg(\mathbf{a})=d$ and all $i=0,\ldots,2d$, there exist vectors $\mathbf{h}_i \in\mathbb{K}[s]^{n}$ such that $\deg(\mathbf{h}_i) \le d$ and $\mathbf{a}\,\mathbf{h}_i= s^{i}$.
\end{lemma}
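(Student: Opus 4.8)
The plan is to prove the statement in two stages: first handle the top degree $i=2d$, then descend. For $i=2d$: since $\deg(\aa)=d$, the leading vector $LV(\aa)$ is a nonzero vector in $\k^n$; pick any constant vector $w \in \k^n$ with $LV(\aa)\cdot w \neq 0$, say $LV(\aa)\cdot w = c \neq 0$. Then $\aa\,w$ is a polynomial of degree exactly $2d$ with leading coefficient $c$, so $\mathbf{h}_{2d} := \tfrac{1}{c}\,w$ gives $\aa\,\mathbf{h}_{2d} = s^{2d} + (\text{lower order terms of degree} < 2d)$. Set aside, for the moment, the lower-order correction.

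The cleaner approach is downward induction on $i$, or equivalently an argument on the span. Let $V = \{\aa\,\mathbf{h} : \mathbf{h} \in \k[s]^n_d\} \subseteq \k[s]_{2d}$; this is a $\k$-subspace. I claim $V = \k[s]_{2d}$, which is exactly the assertion (every $s^i$ is hit). By Lemma~\ref{lem-aA}, under the isomorphisms $\sharp,\flat$ the space $V$ corresponds to the column space of $A$, so $\dim_\k V = \rank A$, and the claim is equivalent to $\rank A = 2d+1$, i.e.\ $A$ has full row rank. So the real content is: \emph{if $\gcd(\aa)=1$ then $A$ has full row rank $2d+1$.} I would prove this by exhibiting, for each $i$, a vector in $V$ whose top degree is $i$, descending from $i=2d$; producing such a "staircase" of elements shows $V$ has dimension $\geq 2d+1$, hence equals $\k[s]_{2d}$, and then each $s^i$ individually lies in $V$.

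To build the staircase using only $\gcd(\aa)=1$: recall that $\gcd(\aa)=1$ means there is a B\'ezout vector, i.e.\ some $\mathbf{g} \in \k[s]^n$ (a priori of large degree) with $\aa\,\mathbf{g}=1$. Then for any $j \geq 0$, $\aa\,(s^j \mathbf{g}) = s^j$, but $s^j\mathbf{g}$ need not have degree $\leq d$. The standard fix is a division/reduction step: I would argue that the $\k[s]$-submodule generated by all $\mathbf{h}$ with $\deg(\aa\mathbf{h})$ controlled, combined with the syzygy module, lets one reduce $s^j\mathbf{g}$ modulo columns of an algebraic moving frame. Concretely: if $P$ is any algebraic moving frame at $\aa$ (exists by the extended Euclidean algorithm, though the paper is building this constructively) then $\aa P = [1,0,\dots,0]$, so $\aa P_{*1} = 1$; more useful, for the downward step, one uses that whenever we have $\mathbf{h}$ with $\aa\mathbf{h} = s^i + (\text{lower})$ and $\deg \mathbf{h} > d$, we can subtract a $\k[s]$-multiple of a syzygy or of $\mathbf{h}_{2d}$ times a power of $s$ to lower the degree of $\mathbf{h}$ without changing the top term of $\aa\mathbf{h}$ — because $LV(\aa)\cdot LV(\mathbf{h}_{2d}) \neq 0$ lets us cancel the leading vector of $\mathbf{h}$. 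Iterating this reduction, and then peeling off top terms of $\aa\mathbf{h}$ one at a time (again subtracting suitable multiples), produces $\mathbf{h}_i$ with $\deg(\mathbf{h}_i)\leq d$ and $\aa\mathbf{h}_i = s^i$ exactly, for every $i=0,\dots,2d$.

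The main obstacle I anticipate is the \emph{degree bookkeeping in the reduction step}: one must verify that the process of lowering $\deg(\mathbf{h})$ down to $\leq d$ while successively correcting the lower-order terms of $\aa\mathbf{h}$ actually terminates and never forces the degree back above $d$. The key leverage is that $\aa$ has degree exactly $d$ with $LV(\aa)$ a fixed nonzero vector: any $\mathbf{h}$ of degree $e \geq d$ with $\aa\mathbf{h}$ of degree $< d+e$ must have $LV(\mathbf{h}) \perp LV(\aa)$, and can be shortened using a syzygy of degree $\leq d$ (a $\mu$-basis element) plus the vector $\mathbf{h}_{2d}$; keeping careful track shows each correction either reduces $\deg(\mathbf{h})$ or reduces the number of surviving lower-order terms in $\aa\mathbf{h}$, giving termination. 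Alternatively, if one is willing to cite freeness/existence results, the whole lemma follows immediately from $\rank A = 2d+1$ via a dimension count on $R$ as in \cite{vard-1978}, but the self-contained route above is preferable given the paper's stated goal of an elementary proof.
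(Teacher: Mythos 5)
Your core mechanism is the same one the paper uses: since $\mathbf{a}\,\mathbf{h}$ drops degree, $LV(\mathbf{h})$ lies in $LV(\mathbf{a})^{\perp}$, which is spanned by the leading vectors of a $\mu$-basis (each of degree $\le d$), so one can subtract $\sum_j \alpha_j s^{\deg(\mathbf{h})-\deg(\mathbf{u}_j)}\mathbf{u}_j$ to strictly lower $\deg(\mathbf{h})$ without changing $\mathbf{a}\,\mathbf{h}$. The organization differs: the paper runs an upward induction on $i$, starting from a degree-$\le d$ minimal B\'ezout vector (via Theorem~\ref{bez-mu}) and multiplying by $s$, so the degree overshoots $d$ by at most one and a single reduction step suffices each time; you instead take an arbitrary (possibly high-degree) B\'ezout vector $\mathbf{g}$, form $s^i\mathbf{g}$ with $\mathbf{a}\,(s^i\mathbf{g})=s^i$ exactly, and iterate the reduction until the degree falls to $\le d$. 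Your version is fine --- syzygy subtractions never alter $\mathbf{a}\,\mathbf{h}=s^i$, and the degree strictly decreases, so termination is immediate --- and it cleanly sidesteps the ``degree bookkeeping'' you worry about. The one thing to excise is the role you assign to $\mathbf{h}_{2d}$: subtracting multiples of a non-syzygy like $w/c$ does change $\mathbf{a}\,\mathbf{h}$ and introduces new lower-order terms, and the subsequent ``peeling off top terms of $\mathbf{a}\,\mathbf{h}$'' would require degree-$\le d$ preimages of polynomials of every top degree $j<2d$ --- i.e.\ exactly the staircase you are trying to build --- so that branch is circular; stick to the syzygy-only reduction of $s^i\mathbf{g}$ and the proof closes. (Your observation that the lemma is equivalent to $\operatorname{rank}A=2d+1$ is correct but note the paper derives the rank statement, Proposition~\ref{lem-rank}, \emph{from} this lemma, not the other way around.)
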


\begin{proof}
Let $\uu_1,\ldots,\uu_{n-1}$ be a $\mu$-basis of $\aa$. We will proceed by induction on $i$.

 \noindent Induction basis: For $i=0$, the statement follows immediately from Theorem \ref{bez-mu} and the well-known fact that $\syz(\aa)$ can be generated by vectors of degree at most $d$ (see, for example, \cite{hhk2017} or \cite{song-goldman-2009}).

\noindent  Induction step: Assume the
statement is true in the $i$-th case i.e. there exists $\mathbf{h}_i%
\in\mathbb{K}[s]^{n}$ with $\deg(\mathbf{h}_i) \le d$ such that $\mathbf{a}\,
\mathbf{h}_i= s^{i}$ ($i \le2d-1$). Then $\mathbf{a }\,(s\mathbf{h}_i)= s^{i+1}$.
Let $\tilde{\mathbf{h}} = s\mathbf{h}_i$. Since $\deg(\mathbf{h}_i) \le d$, it
follows that $\deg(\tilde{\mathbf{h}}) \le d+1$. If $\deg(\tilde{\mathbf{h}})
\le d$, let $\hh_{i+1}=\tilde{\hh}$ and we are done. Otherwise, $\deg(\tilde{\mathbf{h}}) = d+1$. Following a similar argument as in Theorem~\ref{bez-mu}, the coefficient of $\mathbf{a}\tilde{\mathbf{h}}$ for $s^{2d+1}$ is
$LV(\mathbf{a})\,LV(\tilde{\mathbf{h}})$, and since we assumed $i \le2d-1$, it
must be that $LV(\mathbf{a})LV(\tilde{\mathbf{h}}) = 0$. Thus, there exist
constants $\alpha_{1},\ldots,\alpha_{n-1}$ such that
\(
LV(\tilde{\mathbf{h}}) = \sum_{j=1}^{n-1} \alpha_{j}LV(\uu_{j}).
\)
Define
\[
\mathbf{h}_{i+1}= \tilde{\mathbf{h}} - \sum_{j=1}^{n-1} \alpha_{j}%
\uu_{j}s^{d+1-\deg(\uu_{j})}.
\]
Then $\mathbf{a}\,\mathbf{h}_{i+1} = s^{i+1}$ and $\deg(\mathbf{h}_{i+1}) <
\deg(\tilde{\mathbf{h}})$, which means $\deg(\mathbf{h}_{i+1}) \le d$.
\end{proof}


\begin{proposition}[Full Rank]
\label{lem-rank} For a nonzero polynomial vector $\mathbf{a}$ of degree $d$,
defined by \eqref{eq-a}, such that $\gcd(\mathbf{a})=1$, the corresponding
matrix $A$, defined by \eqref{eq-A}, has rank $2d+1$.
\end{proposition}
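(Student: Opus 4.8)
The goal is to show that the $(2d+1)\times n(d+1)$ matrix $A$ has rank $2d+1$, i.e. that its rows span all of $\mathbb K^{2d+1}$, equivalently that the $\mathbb K$-linear map $A\colon\mathbb K^{n(d+1)}\to\mathbb K^{2d+1}$ is surjective. The natural move is to transport the statement back to the polynomial side via Lemma~\ref{lem-aA}: under the isomorphisms $\sharp$ and $\flat$, surjectivity of $A$ is equivalent to surjectivity of the $\mathbb K$-linear map $\mathbf a\colon\mathbb K[s]^n_d\to\mathbb K[s]_{2d}$, $\mathbf h\mapsto\mathbf a\,\mathbf h$. So it suffices to prove that every polynomial of degree at most $2d$ is of the form $\mathbf a\,\mathbf h$ for some $\mathbf h\in\mathbb K[s]^n_d$.

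\textbf{Key steps.} First I would note that $\mathbb K[s]_{2d}$ is spanned as a $\mathbb K$-vector space by the monomials $1,s,\dots,s^{2d}$. Then I would invoke Lemma~\ref{pre-rank}, which (using $\gcd(\mathbf a)=1$) produces, for each $i=0,\dots,2d$, a vector $\mathbf h_i\in\mathbb K[s]^n$ with $\deg(\mathbf h_i)\le d$ and $\mathbf a\,\mathbf h_i=s^i$. Hence every monomial $s^i$ with $0\le i\le 2d$ lies in the image of the map $\mathbf h\mapsto\mathbf a\,\mathbf h$ restricted to $\mathbb K[s]^n_d$; since this image is a $\mathbb K$-subspace of $\mathbb K[s]_{2d}$ containing a spanning set, the map is surjective onto $\mathbb K[s]_{2d}$, which is $(2d+1)$-dimensional. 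Finally, translating back through Lemma~\ref{lem-aA}: for any $u\in\mathbb K^{2d+1}$, the polynomial $u^{\flat^1_{2d}}\in\mathbb K[s]_{2d}$ equals $\mathbf a\,\mathbf h$ for some $\mathbf h\in\mathbb K[s]^n_d$, and then $A\,\mathbf h^{\sharp^n_d}=(\mathbf a\,\mathbf h)^{\sharp^1_{2d}}=u$, so $A$ is surjective. Since the image of $A$ has dimension $2d+1$ and $A$ has exactly $2d+1$ rows, $\rank(A)=2d+1$.

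\textbf{Main obstacle.} Almost all of the real content has been packaged into Lemma~\ref{pre-rank}, whose inductive proof is where $\gcd(\mathbf a)=1$ genuinely enters (through the existence of a B\'ezout vector and a $\mu$-basis, and the degree-reduction trick of Theorem~\ref{bez-mu}). Given that lemma, the proof of the proposition is essentially bookkeeping with the dictionary of Lemma~\ref{lem-aA}. The only point requiring a moment of care is making sure the degree bound $\deg(\mathbf h_i)\le d$ matches the column count $n(d+1)$ of $A$ exactly — i.e. that the $\mathbf h_i$ genuinely live in the domain $\mathbb K[s]^n_d$ of the map realized by $A$ — but this is exactly what Lemma~\ref{pre-rank} guarantees, so no surprises should arise.
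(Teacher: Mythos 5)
Your proposal is correct and follows essentially the same route as the paper: both invoke Lemma~\ref{pre-rank} to produce, for each $i=0,\dots,2d$, a vector $\mathbf h_i\in\mathbb K[s]^n_d$ with $\mathbf a\,\mathbf h_i=s^i$, and then use Lemma~\ref{lem-aA} (together with $(s^i)^{\sharp}=e_{i+1}$) to conclude that every standard basis vector of $\mathbb K^{2d+1}$ lies in the range of $A$, so $\operatorname{rank}(A)=2d+1$. No gaps.
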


\begin{proof}
By Lemma \ref{pre-rank}, for all $i=0,\ldots,2d$, there exist vectors
$\mathbf{h}_i\in\mathbb{K}[s]^{n}$ with $\deg(\mathbf{h}_i) \le d$ such that
$\mathbf{a}\mathbf{h}_i= s^{i}$. Observe that $(s^{i})^{\sharp} = e_{i+1}$.
Since $(\mathbf{a}\mathbf{h}_i)^{\sharp} = A\mathbf{h}_i^{\sharp}$, it follows
that there exist vectors $\mathbf{h}_i^{\sharp} \in\mathbb{K}^{n(d+1)}$ such
that $A\mathbf{h}_i^{\sharp} = e_{j}$ for all $j=1,\ldots,2d+1$. This means the
range of $A$ is $\mathbb{K}^{2d+1}$ and hence $\operatorname{rank}(A)=2d+1$.
\end{proof}

\subsection{The minimal B\'ezout vector theorem}

\label{ssect-bez} In this section, we construct a B\'ezout vector of $\aa$ of minimal
degree by finding an appropriate solution to the linear equation
\begin{equation}
\label{eq-bezA}A\,v=e_{1}, \text{ where } e_{1}=[1,0,\,\dots,\, 0]^{T}%
\in\mathbb{K}^{2d+1}.
\end{equation}
The following lemma establishes a one-to-one correspondence between the set
$\operatorname{Bez}_{d}(\mathbf{a})$ of B\'ezout vectors of $\aa$ of degree at most $d$ and the set
of solutions to \eqref{eq-bezA}.

\begin{lemma}
\label{lem-bez} Let $\mathbf{a}\in\mathbb{K}[s]_{d}^{n}$ be a nonzero vector
such that $\gcd(\mathbf{a})=1$. Then $\mathbf{b}\in\mathbb{K}[s]^{n}_{d}$
belongs to $\operatorname{Bez}_{d}(\mathbf{a})$ if and only if $\mathbf{b}%
^{\sharp}$ is a solution of \eqref{eq-bezA}. Also $v\in\mathbb{K}^{n(d+1)}$
solves \eqref{eq-bezA} if and only if $v^{\flat}$ belongs to
$\operatorname{Bez}_{d}(\mathbf{a})$.

\end{lemma}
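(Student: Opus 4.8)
The plan is to translate the defining polynomial identity of a B\'ezout vector, $\mathbf{a}\,\mathbf{b}=\gcd(\mathbf{a})=1$, into the matrix equation $A\,\mathbf{b}^{\sharp}=e_{1}$ by pushing it through the mutually inverse isomorphisms $\sharp^{n}_{d}$, $\flat^{n}_{d}$ and Lemma~\ref{lem-aA}. The one nontrivial observation to isolate at the start is that $e_{1}\in\mathbb{K}^{2d+1}$ is exactly the coordinate vector of the constant polynomial $1$, i.e. $1^{\sharp^{1}_{2d}}=e_{1}$; everything else is bookkeeping.

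First I would record the degree count: for $\mathbf{b}\in\mathbb{K}[s]^{n}_{d}$ we have $\deg(\mathbf{a}\,\mathbf{b})\le\deg(\mathbf{a})+\deg(\mathbf{b})\le 2d$, so $\mathbf{a}\,\mathbf{b}\in\mathbb{K}[s]_{2d}$ and the map $\sharp^{1}_{2d}$ is legitimately applicable to it. Then, for the first equivalence, I would argue in a single chain: $\mathbf{b}\in\operatorname{Bez}_{d}(\mathbf{a})$ means $\deg(\mathbf{b})\le d$ together with $\mathbf{a}\,\mathbf{b}=\gcd(\mathbf{a})$, which equals $1$ by the hypothesis $\gcd(\mathbf{a})=1$. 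By the second identity of Lemma~\ref{lem-aA}, $(\mathbf{a}\,\mathbf{b})^{\sharp^{1}_{2d}}=A\,\mathbf{b}^{\sharp^{n}_{d}}$, so, using injectivity of $\sharp^{1}_{2d}$,
\[
\mathbf{a}\,\mathbf{b}=1 \iff (\mathbf{a}\,\mathbf{b})^{\sharp^{1}_{2d}}=1^{\sharp^{1}_{2d}}=e_{1} \iff A\,\mathbf{b}^{\sharp}=e_{1}.
\]
This proves both directions of the first assertion simultaneously.

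For the second assertion I would invoke that $\flat^{n}_{d}$ and $\sharp^{n}_{d}$ are mutually inverse (stated just before Lemma~\ref{lem-aA}): if $v\in\mathbb{K}^{n(d+1)}$ solves $A\,v=e_{1}$, set $\mathbf{b}=v^{\flat^{n}_{d}}\in\mathbb{K}[s]^{n}_{d}$, so that $\mathbf{b}^{\sharp^{n}_{d}}=v$ and the first assertion gives $\mathbf{b}=v^{\flat}\in\operatorname{Bez}_{d}(\mathbf{a})$; conversely, if $v^{\flat}\in\operatorname{Bez}_{d}(\mathbf{a})$, the first assertion applied to $\mathbf{b}=v^{\flat}$ shows $v=(v^{\flat})^{\sharp}$ solves $A\,v=e_{1}$.

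I do not anticipate any genuine obstacle here: the lemma is a formal consequence of Lemma~\ref{lem-aA} and the invertibility of the coordinate isomorphisms. The only points requiring any care are the degree bookkeeping that makes $\sharp^{1}_{2d}$ and $\sharp^{n}_{d}$ applicable at the correct indices, and the elementary identification $1^{\sharp^{1}_{2d}}=e_{1}$, which is what links the right-hand sides of the two equations.
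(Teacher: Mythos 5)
Your proposal is correct and is exactly the argument the paper compresses into one line (``Follows immediately from \eqref{eq-aA} and the observation that $e_{1}^{\flat^{1}_{2d}}=1$''); you have simply spelled out the same reduction via Lemma~\ref{lem-aA}, the identification $1^{\sharp^{1}_{2d}}=e_{1}$, and the mutual invertibility of $\sharp$ and $\flat$, with the appropriate degree bookkeeping.
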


\begin{proof}
Follows immediately from \eqref{eq-aA} and the observation that $e_{1}%
^{\flat^{1}_{2d}}=1$.
\end{proof}

Thus, our goal is to construct a solution $v$  of  \eqref{eq-bezA}, such that
$v^{\flat}$ is a {B\'ezout} vector of $\aa$ of minimal degree. To accomplish this,
we recall that, when $\gcd(\mathbf{a})=1$, Proposition~\ref{lem-rank} asserts
that $\mathrm{rank}(A)=2d+1$. Therefore, $A$ has exactly $2d+1$ pivotal
indices, which we can list in the increasing order $p=\{p_{1},\dots,
p_{2d+1}\}$. The corresponding columns of matrix $A$ form a basis of
$\mathbb{K}^{2d+1}$ and, therefore, $e_{1}\in\mathbb{K}^{2d+1}$ can be
expressed as a unique linear combination of the pivotal columns:
\begin{equation}
\label{eq-e1comb}e_{1}=\sum_{j=1}^{2d+1}\alpha_{j}A_{*p_{j}}.
\end{equation}
Define vector $v\in\mathbb{K}^{2d+1}$ by setting its $p_{j}$-th element to be
$\alpha_{j}$ and all other elements to be 0. We prove that $\mathbf{b}%
=v^{\flat}$ is a {B\'ezout} vector of $\aa$ of minimal degree.

\begin{theorem}
[Minimal-Degree {B\'ezout} Vector]\label{thm-bez} Let $\mathbf{a}\in\mathbb{K}%
[s]^{n}_{d}$ be a polynomial vector with $\gcd(\mathbf{a})=1$, and let $A$ be the
corresponding matrix defined by \eqref{eq-A}. Let $p=\{p_{1},\dots,
p_{2d+1}\}$ be the pivotal indices of $A$, and let $\alpha_{1},\dots
,\alpha_{2d+1}\in\mathbb{K}$ be defined by the unique expression
\eqref{eq-e1comb} of the vector $e_{1}\in\mathbb{K}^{2d+1}$ as a linear
combination of the pivotal columns of $A$. Define vector $v\in\mathbb{K}%
^{2d+1}$ by setting its $p_{j}$-th element to be $\alpha_{j}$ for $j=1,\dots,
2d+1$ and all other elements to be 0, and let $\mathbf{b}=v^{\flat}$. Then

\begin{enumerate}
\item $\mathbf{b}\in\operatorname{Bez}_d(\mathbf{a})$

\item $\deg(\mathbf{b})=\displaystyle{\min_{\mathbf{b}^{\prime}\in
\operatorname{Bez}(\mathbf{a})}} \deg(\mathbf{b}^{\prime}).$ 
\end{enumerate}
\end{theorem}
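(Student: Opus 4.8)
The plan is to establish the two claims in order, first showing that $\mathbf{b}=v^{\flat}$ is a B\'ezout vector of degree at most $d$, and then showing its degree is minimal among \emph{all} B\'ezout vectors. The first claim is immediate: by construction, $Av=\sum_{j=1}^{2d+1}\alpha_j A_{*p_j}=e_1$, so $v$ solves \eqref{eq-bezA}, and by Lemma~\ref{lem-bez} this is equivalent to $\mathbf{b}=v^{\flat}\in\operatorname{Bez}_d(\mathbf{a})$.

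For the second (and harder) claim, I would argue by contradiction: suppose there is a B\'ezout vector $\mathbf{b}'$ with $\deg(\mathbf{b}')<\deg(\mathbf{b})=:t$. The key observation driving the whole argument is that, because $v$ is supported only on pivotal indices, the specific nonzero entry of $v$ that realizes the degree $t$ — call its index $p_k$ — forces $p_k$ to lie in a particular "block" of the Sylvester-type matrix: writing $p_k = (\ell-1)n + r$ with $1\le r\le n$, the degree of $\mathbf{b}$ being exactly $t$ means $t = \max\{\ell-1 : v_{p_k}\neq 0\}$, i.e. $t$ equals $\ell-1$ for the largest block index $\ell$ containing a support element of $v$. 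So I must rule out the existence of any solution of $Av'=e_1$ supported entirely on pivotal indices in blocks $1,\dots,t$ (equivalently, on indices $\le tn$). Via Lemma~\ref{lem-bez}, B\'ezout vectors of degree $<t$ correspond to solutions $v'$ of $Av'=e_1$ with $v'$ supported on the first $tn$ coordinates (after extending $\mathbf{b}'^{\sharp}$ by zeros to length $n(d+1)$). The strategy is to show that if such a $v'$ existed, then $e_1$ would already lie in the span of the columns $A_{*1},\dots,A_{*tn}$, and within those columns the pivotal ones among indices $\le tn$ would suffice — but then the \emph{unique} expansion \eqref{eq-e1comb} of $e_1$ in \emph{all} pivotal columns of $A$ would have to coincide with this shorter expansion, forcing $\alpha_j=0$ for every pivotal index $p_j>tn$, which in turn forces $\deg(\mathbf{b})\le t-1$, contradicting $\deg(\mathbf{b})=t$.

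The main obstacle I anticipate is making the "within those columns the pivotal ones suffice" step precise, since pivotality is a global property of the column order in $A$, and I need that the pivotal indices of $A$ that are $\le tn$ are exactly the pivotal indices of the truncated matrix $A_{\le tn}:=[A_{*1}\,|\cdots|\,A_{*tn}]$. This is true because pivotality of a column depends only on the columns preceding it, so it is an "initial-segment-stable" notion — a column at index $\le tn$ is pivotal in $A$ iff it is pivotal in $A_{\le tn}$. Granting this, if $e_1\in\operatorname{span}\{A_{*1},\dots,A_{*tn}\}$, then $e_1$ is a linear combination of the pivotal columns among indices $\le tn$; extending by zero coefficients on the remaining pivotal columns gives an expansion of $e_1$ in all pivotal columns of $A$, which by uniqueness of \eqref{eq-e1comb} must equal the one defining $v$. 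Hence $v$ is supported on $\{p_j \le tn\}$, so every monomial of $\mathbf{b}=v^{\flat}$ has degree $\le t-1$; this contradicts $\deg(\mathbf{b})=t$. Therefore no B\'ezout vector of degree $<t$ exists, i.e. $\deg(\mathbf{b})=\min_{\mathbf{b}'\in\operatorname{Bez}(\mathbf{a})}\deg(\mathbf{b}')$. A final small point to check is that it suffices to consider competitors in $\operatorname{Bez}_d(\mathbf{a})$ rather than all of $\operatorname{Bez}(\mathbf{a})$: since $\deg(\mathbf{b})\le d$ already, any B\'ezout vector of strictly smaller degree is automatically of degree $\le d$, so Lemma~\ref{lem-bez} applies to it, and no generality is lost.
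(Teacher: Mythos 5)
Your proposal is correct and follows essentially the same route as the paper's proof: part (1) is the same immediate application of Lemma~\ref{lem-bez}, and part (2) is the same contradiction argument, in which a hypothetical lower-degree B\'ezout vector would give an expansion of $e_1$ using only columns of index at most $tn$, hence (since non-pivotal columns are combinations of \emph{preceding} pivotal columns) only pivotal columns of index at most $tn$, and the uniqueness of \eqref{eq-e1comb} then forces the top nonzero coefficient $\alpha_k$ of $v$ to vanish. Your explicit "initial-segment-stable" remark about pivotality is exactly the observation the paper uses implicitly, so there is no substantive difference.
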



\begin{proof}\hfill
\begin{enumerate}
\item From \eqref{eq-e1comb}, it follows immediately that $Av=e_{1}$. 
Therefore, by Lemma~\ref{lem-bez}, we have that $\mathbf{b}=v^{\flat}%
\in\operatorname{Bez}_{d}(\mathbf{a})$.

\item To show that $\mathbf{b}$ is of minimal degree, we rewrite
\eqref{eq-e1comb} as
\begin{equation}
\label{eq-e1comb1}e_{1}=\sum_{j=1}^{k}\alpha_{j} A_{*p_{j}},
\end{equation}
where $k$ is the largest integer between 1 and $2d+1$, such that $\alpha
_{k}\neq0$. Then the last nonzero entry of $v$ appears in $p_{k}$-th position
and, therefore,
\begin{equation}
\label{degw}\deg(\mathbf{b})=\deg(v^{\flat})=\big\lceil{p_{k}/n}\big\rceil-1.
\end{equation}
Assume that $\mathbf{b}^{\prime}\in\operatorname{Bez}(\mathbf{a})$ is such
that $\deg(\mathbf{b}^{\prime})<\deg(\mathbf{b})$. Then $\mathbf{b}^{\prime
}\in\operatorname{Bez}_{d}(\mathbf{a})$ and therefore $A\,v^{\prime}=e_{1}$,
for $v^{\prime}=\mathbf{b}^{\prime\sharp}=[v^{\prime}_{1},\dots, v^{\prime
}_{n(d+1)}]\in\mathbb{K}^{n(d+1)}$. Then
\begin{equation}
\label{eq-e1v1}e_{1}=\sum_{j=1}^{n(d+1)}\,v^{\prime}_{j}A_{*j} =\sum_{j=1}%
^{r}v^{\prime}_{j} A_{*j},
\end{equation}
where $r$ is the largest integer between 1 and $n(d+1)$, such that $v^{\prime
}_{r}\neq0$.  Then
\begin{equation}
\label{degw'}\deg(\mathbf{b}^{\prime})=\big\lceil{r/n}\big\rceil-1
\end{equation}
and since we assumed that $\deg(\mathbf{b}^{\prime})<\deg(\mathbf{b})$, we
conclude from \eqref{degw} and \eqref{degw'} that $r<p_{k}$.

On the other hand, since all non-pivotal columns are linear combinations of
the preceding pivotal columns, we can rewrite \eqref{eq-e1v1} as
\begin{equation}
\label{eq-e1v2}e_{1}=\sum_{j\in\{1,\dots, 2d\,|\, p_{j}\leq r<p_{k}\}}%
\alpha^{\prime}_{j} A_{*p_{j}}=\sum_{j=1}^{k-1}\alpha^{\prime}_{j} A_{*p_{j}}.
\end{equation}
By the uniqueness of the representation of $e_1$ as a linear combination of the $A_{\ast p_j}$, the coefficients in the expansions \eqref{eq-e1comb1} and
\eqref{eq-e1v2} must be equal, but $\alpha_{k}\neq0$ in \eqref{eq-e1comb1}. Contradiction.
\end{enumerate}
\end{proof}

In the algorithm presented in Section~\ref{sect-alg}, we exploit the fact that
the coefficients $\alpha$'s in \eqref{eq-e1comb1} needed to construct a minimal-degree
{B\'ezout} vector of $\aa$ can be read off the reduced row echelon form $[\hat A| \hat
v]$ of the augmented matrix $[A|e_{1}]$. On the other hand, as was shown in \cite{hhk2017}
and reviewed in the next section, the coefficients of a $\mu$-basis of $\mathbf{a}$ also can be
read off the matrix $\hat A$. Therefore, a $\mu$-basis is constructed as a
byproduct of our algorithm for constructing a {B\'ezout} vector of minimal degree.


\subsection{ The $\mu$-bases theorem}

\label{ssect-mu}
In \cite{hhk2017}, we showed that the coefficients of a $\mu$-basis of $\aa$ can be read off the basic
non-pivotal columns of matrix $A$ (recall Definition~\ref{def-bnp}). Recall
that according to Lemma~\ref{lem-card}, the matrix $A$ has exactly $n-1$ basic
non-pivotal columns.

\begin{theorem}
[$\mu$-Basis]\label{thm-mu} Let $\mathbf{a}\in\mathbb{K}[s]^{n}_{d}$ be a
polynomial vector, and let $A$ be the corresponding matrix defined by \eqref{eq-A}.
Let $\tilde q=[\tilde q_{1},\dots, \tilde q_{n-1}]$ be the basic non-pivotal
indices of $A$, ordered increasingly. For $i=1,\dots,{n-1}$, a basic non-pivotal
column $A_{*\tilde q_{i}}$ is a linear combination of the previous pivotal
columns:
\begin{equation}
\label{eqAij}A_{*\tilde q_{i}}=\sum_{\{r\in p\,|\, r<\tilde q_{i}\}}
\alpha_{ir} A_{*r},
\end{equation}
for some $\alpha_{ir}\in\mathbb{K}$. Define vector $b_{i}\in\mathbb{K}^{2d+1}$
by setting its $\tilde q_{i}$-th element to be $1$, its $r$-th element to be
$-\alpha_{ir}$ for $r\in p$ such that $p_{j}<\tilde q_{i}$, and all other
elements to be 0. Then the set of polynomial vectors
\[
\mathbf{u}_{1}=b_{1}^{\flat},\quad\dots\quad, \mathbf{u}_{n-1}=b_{n-1}^{\flat}%
\]
is a degree-ordered $\mu$-basis of $\mathbf{a}$.
\end{theorem}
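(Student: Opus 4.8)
The plan is to verify the two defining properties of a $\mu$-basis from Definition~\ref{def-mb}: point-wise linear independence of the leading vectors $LV(\uu_1),\dots,LV(\uu_{n-1})$ over $\k$, and the fact that $\uu_1,\dots,\uu_{n-1}$ generate $\syz(\aa)$. First I would record the two basic structural facts. By construction, each $b_i$ is a vector in $\mathbb{K}^{n(d+1)}$ supported on pivotal indices $<\tilde q_i$ together with the index $\tilde q_i$ itself (with entry $1$ there), so by \eqref{eqAij} we have $Ab_i=0$; hence by Lemma~\ref{lem-aA} the polynomial vector $\uu_i=b_i^{\flat}$ satisfies $\aa\,\uu_i=0$, i.e.\ $\uu_i\in\syz(\aa)$, and moreover $\deg(\uu_i)=\lceil \tilde q_i/n\rceil-1$ since the last nonzero entry of $b_i$ sits in position $\tilde q_i$. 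Since the $\tilde q_i$ are ordered increasingly, the $\uu_i$ are degree-ordered.

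Next I would establish the independence of the leading vectors. The leading vector $LV(\uu_i)$ is built from the block of $n$ consecutive coordinates of $b_i$ indexed by the degree-$(\deg\uu_i)$ slot, and the component of $b_i$ at index $\tilde q_i$ is $1$ while all components of $b_i$ at indices strictly larger than $\tilde q_i$ vanish. Using the periodicity structure of the non-pivotal indices (Lemma~\ref{periodic}) together with the fact that $\tilde q_1,\dots,\tilde q_{n-1}$ lie in distinct residue classes modulo $n$ (Definition~\ref{def-bnp}), one checks that the position of the ``$1$'' within the leading block of $\uu_i$ is the residue $\tilde q_i \bmod n$, and these residues are pairwise distinct. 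A short linear-algebra argument — order the $\uu_i$ so these special residues are increasing and use that any later $\uu_j$ has zero entries at the index $\tilde q_i$ — then shows the $LV(\uu_i)$ cannot satisfy a nontrivial linear relation over $\k$. By Lemma~\ref{lem-basis}, this already gives that $\uu_1,\dots,\uu_{n-1}$ are linearly independent over $\k[s]$.

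For the generation claim, since we already know $\uu_1,\dots,\uu_{n-1}$ are $n-1$ independent elements of $\syz(\aa)$ with independent leading vectors, it suffices to show they generate. I would argue that an arbitrary $\hh\in\syz(\aa)$ can be reduced to $0$ by subtracting $\k[s]$-multiples of the $\uu_i$: if $\deg(\hh)\le d$, then $\hh^{\sharp}\in\mathbb{K}^{n(d+1)}$ lies in the kernel of $A$, and the kernel of $A$ has dimension $n(d+1)-(2d+1)=|q|$; the vectors $b_i$ together with their periodic shifts $b_i^{(k)}$ (obtained via the periodicity relation \eqref{eq-periodic}, which produces exactly one kernel vector for each non-pivotal index) span this kernel because their supports identify distinct non-pivotal ``leading'' positions. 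Translating back through $\flat$ and noting that $b_i^{(k)\flat}=s^k\uu_i$ (up to the degree-$d$ truncation), one gets $\hh$ as a $\k[s]$-combination of the $\uu_i$ of the controlled degree. For general $\hh$ of arbitrarily large degree, one first reduces the degree: repeatedly use the leading-vector argument as in the proof of Theorem~\ref{bez-mu} (the $LV(\uu_i)$ span $LV(\aa)^\perp$, which contains $LV(\hh)$) to bring $\deg(\hh)$ down to $\le d$, then apply the previous step. Alternatively, one could simply invoke that $\syz(\aa)$ is generated by vectors of degree $\le d$ (cited in the proof of Lemma~\ref{pre-rank}) and only run the kernel argument. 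I expect the main obstacle to be the bookkeeping in the generation step — matching the dimension count of $\ker A$ against the full collection of basic and periodic kernel vectors $b_i^{(k)}$ and verifying these really span, rather than the leading-vector independence, which is essentially combinatorial. Since the theorem is stated as ``a slight modification of Theorem~27 in \cite{hhk2017},'' I would in practice cite that result for the substance and only indicate the minor changes needed (here the matrix $A$ has one extra row-block, which does not affect the non-pivotal structure that drives the argument).
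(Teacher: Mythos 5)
Your proposal is correct in substance, but it takes a different route from the paper: the paper's own proof of Theorem~\ref{thm-mu} consists of citing Theorem~27 of \cite{hhk2017} for the fact that $\uu_1,\dots,\uu_{n-1}$ is a $\mu$-basis, and then only verifying the new ``degree-ordered'' clause by observing that the last nonzero entry of $b_i$ sits in position $\tilde q_i$, so $\deg(\uu_i)=\lceil \tilde q_i/n\rceil-1$ is increasing in $i$. What you supply instead is a self-contained reconstruction of the cited result, and its ingredients are the right ones: $Ab_i=0$ gives $\uu_i\in\syz(\aa)$ via Lemma~\ref{lem-aA}; the leading vectors are independent by a triangularity argument driven by the distinct residues of the $\tilde q_i$ modulo $n$ and the periodicity Lemma~\ref{periodic}; and generation follows from the dimension count $\dim\ker A=|q|$ together with the periodic shifts $b_i^{(k)}$ (whose last nonzero positions exhaust $q$), plus a leading-vector degree reduction to handle syzygies of degree exceeding $d$. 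The trade-off is the obvious one: the paper's proof is two lines but not self-contained, while yours makes the paper independent of \cite{hhk2017} at the cost of reproving its Theorem~27.

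One detail in your leading-vector step needs tightening. The triangular structure holds with respect to the given ordering by increasing $\tilde q_i$, not by increasing residues: for $j<i$ one has $\deg(\uu_j)\le\deg(\uu_i)$, so the position $r_j=\operatorname{rem}(\tilde q_j-1,n)+1$ inside the \emph{leading block} of $b_i$ corresponds to the index $n\deg(\uu_i)+r_j\ge \tilde q_j$, which is congruent to $\tilde q_j$ modulo $n$ and hence non-pivotal by Lemma~\ref{periodic}; since it differs from $\tilde q_i$, the entry of $b_i$ there is $0$. This is the index you want, not the index $\tilde q_i$ itself (which lies in a lower block when $\deg(\uu_j)>\deg(\uu_i)$). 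With that correction the submatrix of $[LV(\uu_1),\dots,LV(\uu_{n-1})]$ on rows $r_1,\dots,r_{n-1}$ is unitriangular and the independence follows. Also note that the dimension count $\dim\ker A=n(d+1)-|p|=|q|$ holds without invoking $\operatorname{rank}(A)=2d+1$, so you need not lean on the hypothesis $\gcd(\aa)=1$ there.
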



\begin{proof}
The fact that $\mathbf{u}_{1}=b_{1}^{\flat},\quad\dots\quad, \mathbf{u}%
_{n-1}=b_{n-1}^{\flat}$ is a $\mu$-basis of $\aa$ is the statement of Theorem~27 of
\cite{hhk2017}. By construction, the last nonzero entry of vector $b_{i} $ is
in the $\tilde q_{i}$-th position, and therefore for $i=1,\dots, n-1$,
\[
\deg(\mathbf{u}_{i}) =\deg(b_{i}^{\flat})=\big\lceil{\tilde q_{i}%
/n}\big\rceil-1.
\]
Since the indices in $\tilde q$ are ordered increasingly, the vectors
$\mathbf{u}_{1},\, \dots,\, \mathbf{u}_{n-1}$ are degree-ordered.
\end{proof}

The algorithm presented in Section~\ref{sect-alg} exploits the fact that the
coefficients $\alpha$'s in \eqref{eqAij} are already computed in the process
of computing a {B\'ezout} vector of $\aa$.

\section{The degree of an optimal moving frame}\label{sect-degree}
Similarly to the degree of a polynomial vector (Definition~\ref{def-basic}), we define the degree of a polynomial matrix to be the maximum of the degrees of its entries.  Obviously, for a given vector $\aa$, all degree-optimal moving frames have the same degree.  In this section, we establish the sharp upper and lower bounds on the degree of optimal moving frames. We also show that, for generic inputs, the degree of an optimal moving frame equals to the lower bound. 
{An alternative simple proof of the bounds  could be given using the fact that, when $\gcd(\aa)=1$, the sum of the degrees of  a $\mu$-basis of $\aa$ equals to $\deg(\aa)$ (see Theorem~2 in \cite{song-goldman-2009}), along with the result relating the degree of a minimal-degree \bez vector and the maximal degree of  a $\mu$-basis in Theorem~\ref{bez-mu} of the current paper. For the sharpness of the lower bound and its generality, one could use Proposition~3.3 of \cite{CI-2015},  determining the dimension of the set of vectors of a given $\mu$-type,   again combined with  Theorem~\ref{bez-mu} of the current paper. Our results on the upper bound differ from what can be deduced from \cite{CI-2015}, because we allow components of $\aa$ to be linearly dependent over $\k$.
To keep the presentation self-contained, we give  the proofs based on the results of the current paper.
We will repeatedly use the following lemma.
\begin{lemma}\label{lem-degree} Let $\aa\in\k[s]^n$ be nonzero and let $A$ be the corresponding matrix \eqref{eq-A}. Furthermore, let $k$ be the maximum among the basic non-pivotal indices of $A$. Then  the degree of any optimal moving frame at $\aa$ equals to $\left\lceil\frac{k}{n}\right\rceil-1$.
\end{lemma}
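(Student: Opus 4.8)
The plan is to identify the degree of an optimal frame with the largest degree among its columns, then replace that by the top degree of a $\mu$-basis, which Theorem~\ref{thm-mu} reads directly off the matrix $A$. Fix a degree-optimal moving frame $P$ at $\aa$; since all optimal frames at $\aa$ have the same column-wise degrees, it suffices to compute $\deg(P)$ for this one. By Theorem~\ref{thm-bbomf}, the first column $\bb:=P_{*1}$ is a minimal-degree B\'ezout vector of $\aa$ and the columns $P_{*2},\dots,P_{*n}$ form a $\mu$-basis $\uu_1,\dots,\uu_{n-1}$ of $\aa$, which by the first requirement in Definition~\ref{def-omf} is degree-ordered. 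Since the degree of a matrix is the maximum of the degrees of its columns, $\deg(P)=\max\{\deg(\bb),\deg(\uu_{n-1})\}$.

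Next I would remove the B\'ezout column from this maximum using Theorem~\ref{bez-mu}: its first case gives $\deg(\bb)=0=\deg(\uu_{n-1})$ and its second case gives $\deg(\bb)<\max_j\deg(\uu_j)=\deg(\uu_{n-1})$, so in either case $\deg(\bb)\le\deg(\uu_{n-1})$, whence $\deg(P)=\deg(\uu_{n-1})$. It then remains to compute $\deg(\uu_{n-1})$. Let $\tilde q_1<\dots<\tilde q_{n-1}$ be the basic non-pivotal indices of $A$, of which there are exactly $n-1$ by Lemma~\ref{lem-card}, so that $k=\tilde q_{n-1}$. Applying Theorem~\ref{thm-mu} to $\aa$ (that theorem assumes nothing about $\gcd(\aa)$) yields a degree-ordered $\mu$-basis $\uu_1',\dots,\uu_{n-1}'$ with $\deg(\uu_i')=\lceil\tilde q_i/n\rceil-1$; and since any two $\mu$-bases of $\aa$ have the same ordered list of degrees by the optimality property in Proposition~\ref{prop-equiv-def}, we get $\deg(\uu_{n-1})=\deg(\uu_{n-1}')=\lceil k/n\rceil-1$. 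Combining the two paragraphs, $\deg(P)=\lceil k/n\rceil-1$, as claimed.

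I do not expect a genuine obstacle; the proof is a short chain of results already established. The only spots that require a little attention are the degenerate case $\deg(\aa)=\deg\bigl(\gcd(\aa)\bigr)$, in which every optimal frame has degree $0$ and $\lceil k/n\rceil-1$ must also equal $0$ (this holds because Theorem~\ref{thm-mu} then forces every basic non-pivotal index into $\{1,\dots,n\}$), and the fact that the $\mu$-basis produced by Theorem~\ref{thm-mu} is a priori a different object from the last $n-1$ columns of $P$, so the coincidence of their top degrees must be deduced from the uniqueness of the $\mu$-degree list rather than assumed.
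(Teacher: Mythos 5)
Your proof is correct and follows essentially the same route as the paper's: read the top $\mu$-basis degree $\lceil k/n\rceil-1$ off $A$ via Theorem~\ref{thm-mu}, transfer it to any $\mu$-basis by the degree-uniqueness in Proposition~\ref{prop-equiv-def}, and use Theorem~\ref{bez-mu} (through Theorem~\ref{thm-bbomf}) to see that the B\'ezout column never dominates. The extra care you take with the degenerate case and with distinguishing the constructed $\mu$-basis from the columns of $P$ is already subsumed by this chain, so nothing further is needed.
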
 
\begin{proof} It is straightforward to check that the maximal degree of the $\mu$-basis, constructed in Theorem~\ref{thm-mu}, has degree $\left\lceil\frac{k}{n}\right\rceil-1$. From the optimality of the degrees property in Proposition~\ref{prop-equiv-def}, it follows that for any two degree-ordered $\mu$-bases $\uu_1,\dots, \uu_{n-1}$ and  $\uu'_1,\dots, \uu'_{n-1}$ of $\aa$ and for $i=1,\dots, n-1$, we have $\deg(\uu_i)=\deg(\uu_i')$. Therefore, the maximum  degree of vectors in any   $\mu$-basis equals to $\left\lceil\frac{k}{n}\right\rceil-1$. Theorem~\ref{bez-mu} implies that the degree of any optimal moving frame equals to the maximal degree of a $\mu$-basis.
\end{proof}
\begin{proposition}[Sharp Degree Bounds.]\label{deg} Let $\aa \in \mathbb{K}[s]^n$ with $\deg(\aa)=d$ and $\gcd(\aa)=1$. Then for every degree-optimal moving frame $P$ at $\aa$, we have $\lceil\frac{ d}{n-1}\rceil \le \deg(P) \le d$, and these degree bounds are sharp. By sharp, we mean that for all $n>1$ and $d>0$, there exists an $\aa \in \mathbb{K}[s]^n$ with $\deg(\aa)=d$ and $\gcd(\aa)=1$ such that, for every degree-optimal moving frame $P$ at $\aa$, we have $\deg(P) = \left\lceil\frac{d}{n-1}\right\rceil$. Likewise, for all $n>1$ and $d>0$, there exists an $\aa \in \mathbb{K}[s]^n$ with $\deg(\aa)=d$ and $\gcd(\aa)=1$ such that, for every degree-optimal moving frame $P$ at $\aa$, we have $\deg(P) = d$.
\end{proposition}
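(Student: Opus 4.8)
The plan is to prove the two inequalities separately, then exhibit the extremal examples. For the inequalities, I would work with a normalized vector, reducing to the case $\gcd(\aa)=1$ (which is already the hypothesis here). By Theorem~\ref{thm-bbomf} and Lemma~\ref{lem-degree}, the degree of an optimal moving frame $P$ equals the maximal degree of a $\mu$-basis $\uu_1,\dots,\uu_{n-1}$ of $\aa$, i.e.\ $\deg(P)=\deg(\uu_{n-1})$ once the $\mu$-basis is degree-ordered. The key classical fact is that, when $\gcd(\aa)=1$, the degrees of a $\mu$-basis satisfy $\deg(\uu_1)+\cdots+\deg(\uu_{n-1})=\deg(\aa)=d$ (this is Theorem~2 of \cite{song-goldman-2009}; alternatively it can be read off the fact that the $n-1$ basic non-pivotal indices of $A$, together with the $2d+1$ pivotal indices, account for all $n(d+1)$ columns, forcing $\sum\big(\lceil\tilde q_i/n\rceil-1\big)=d$ via a counting argument on $q$). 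Given this, the lower bound is immediate: since $\deg(\uu_{n-1})$ is the largest of $n-1$ nonnegative integers summing to $d$, it is at least $\lceil d/(n-1)\rceil$. For the upper bound $\deg(P)\le d$, since each $\deg(\uu_i)\ge 0$ and they sum to $d$, we certainly have $\deg(\uu_{n-1})\le d$; this gives $\deg(P)\le d$ directly.

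For the sharpness of the lower bound, I would construct $\aa$ so that all $\mu$-basis degrees are as equal as possible: write $d=(n-1)m+\rho$ with $0\le\rho<n-1$, and build $\aa$ whose $\mu$-type is $(m,\dots,m,m+1,\dots,m+1)$ with $\rho$ copies of $m+1$ — for instance using a monomial construction in the spirit of the vectors appearing in the proof of Proposition~\ref{mu-type}, or the standard examples $\aa=[s^{d_1},s^{d_1+d_2},\dots]$ type constructions for which a $\mu$-basis of prescribed degrees can be exhibited explicitly together with a unimodular completion. Then $\deg(P)=\deg(\uu_{n-1})=m+1$ if $\rho>0$ and $=m$ if $\rho=0$, which in both cases equals $\lceil d/(n-1)\rceil$. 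One must verify that the exhibited vectors really do form a $\mu$-basis (leading vectors independent, and they generate $\syz(\aa)$) — but this follows from Proposition~\ref{prop-mfbasis} and Definition~\ref{def-mb} once one writes down a moving frame $P$ with $|P|=\pm1$, exactly as done in Proposition~\ref{mu-type}.

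For the sharpness of the upper bound, I would take $\aa$ whose $\mu$-type is $(0,0,\dots,0,d)$, so that one $\mu$-basis vector has degree $d$ and the rest have degree $0$; the simplest choice is $\aa=[1,s,s^2,\dots,s^{n-1}]$ when $d=n-1$, and more generally $\aa=[a_1,\dots,a_n]$ with, say, $a_1=1$ and the others chosen so that $\syz(\aa)$ has a basis consisting of $n-2$ constant vectors and one vector of degree $d$. A clean choice is $\aa=[1,\,s,\,s^2,\,\dots,\,s^{n-2},\,s^d]$: here $\gcd(\aa)=1$, the syzygies $e_i s - e_{i+1}$ for appropriate indices handle the consecutive powers, and the jump to $s^d$ forces one basis syzygy of degree $d-(n-2)$... so to hit degree exactly $d$ one should instead take $n-2$ of the components to be constants and only two "active" components, e.g.\ $\aa=[\,\underbrace{1,0,\dots,0}_{?}\,]$ won't have gcd $1$ unless handled carefully; the safe construction is $\aa=[1+s^d,\,1,\,\dots]$ adjusted so the unique nonconstant relation has degree $d$. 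In the writeup I would simply present one explicit $\aa$ together with an explicit $P\in GL_n(\k[s])$ with $\aa P=[1,0,\dots,0]$, $|P|=\pm1$, and last column of degree $d$, and invoke Theorem~\ref{thm-bbomf} and Lemma~\ref{lem-degree} to conclude optimality and the degree count.

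The main obstacle I anticipate is getting the sharpness examples exactly right: the inequalities themselves are a one-line consequence of the degree-sum identity for $\mu$-bases plus Theorem~\ref{bez-mu}, but verifying that a proposed $\aa$ has precisely the claimed $\mu$-type requires exhibiting an explicit moving frame with unit determinant and checking its column degrees, and one must be careful that $\gcd(\aa)=1$ and that the $\mu$-basis degrees sum correctly to $d$. I would mirror the bookkeeping already carried out in the proof of Proposition~\ref{mu-type}, where exactly this kind of explicit $P$ with $|P|=\pm1$ is written down, so that the verification reduces to inspecting a near-triangular matrix. A secondary subtlety is that the upper bound $d$ must be attained with $\deg(\aa)=d$ exactly (not more), which the monomial-type constructions respect automatically.
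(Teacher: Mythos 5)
Your argument for the two inequalities is correct but follows a genuinely different route from the paper's. You derive both bounds from the identity $\sum_{i}\deg(\uu_i)=d$ for a $\mu$-basis (Theorem~2 of Song--Goldman, or the column-counting argument on $A$ that you sketch) combined with Theorem~\ref{bez-mu}; this is precisely the ``alternative simple proof'' that the paper acknowledges at the top of Section~\ref{sect-degree} and deliberately sets aside in order to stay self-contained. The paper instead proves the lower bound by Cramer's rule applied to $\aa P=[1,0,\dots,0]$ --- each component $a_i$ is, up to a nonzero constant, an $(n-1)\times(n-1)$ minor of $P$, so $\deg(P)<\frac{d}{n-1}$ would force $\deg(\aa)<d$ --- and gets the upper bound for free from Theorems~\ref{thm-bez} and~\ref{thm-mu}, whose constructions produce columns in $\k[s]^n_d$. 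Your route buys brevity at the cost of importing the degree-sum identity; the paper's buys self-containedness. For sharpness of the lower bound, delegating to the explicit unimodular completion of Proposition~\ref{mu-type} with $\mu$-type $(m,\dots,m,m+1,\dots,m+1)$ and $j=0$ does work (that witness has $\deg(\aa)=\sum\mu_i=d$ and $\gcd(\aa)=1$), and is a legitimate alternative to the paper's hand-built block matrix.

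The one place the proposal wobbles is the upper-bound witness. You correctly identify that you need $\mu$-type $(0,\dots,0,d)$, but you never settle on a vector, and your reason for steering away from the zero-padded candidate is mistaken: $\aa=[1,0,\dots,0,s^d]$ has $\gcd(\aa)=1$ (a component equal to the constant $1$ forces gcd $1$ regardless of the zero entries), and it is exactly the paper's witness, completed by $P=I_n$ with $-s^d$ placed in position $(1,n)$. Your fallback $[1+s^d,1,\dots,1]$ would also work but is left unverified. This is not a fatal gap --- the strategy is right and a correct witness is one line away --- but as written the sharpness-of-$d$ part is incomplete.
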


\begin{proof} \hfill

\begin{enumerate}
\item (lower bound): Let $P$ be a degree-optimal moving frame at $\aa$. Then 
$\aa P= 
\left[
\begin{array}{cccc}
  1  &
 0  &
 \cdots &
 0   
\end{array}
\right].
$
and so from Cramer's rule:
$$\aa_i= \frac{(-1)^{i+1}}{|P|}\,\left|P_{i,1}\right| \quad i=1,\dots n,$$
where $P_{i,1}$ denotes  the submatrix  of $P$ obtained by removing the $1$-st column and the $i$-th row. We remind the reader that $|P|$ is a nonzero constant. 
Assume for the sake of contradiction that $\deg(P) < \left\lceil\frac{ d}{n-1}\right\rceil$. Then $\deg(P)<\frac{ d}{n-1}$.  Since $\left|P_{i,1}\right|$ is the determinant of an $(n-1)\times (n-1)$ submatrix of $P$,  we have $\deg\left(\aa_i\right)=\deg\left(\left|P_{i,1}\right|\right) < (n-1)\,\frac{ d}{n-1}=d$ for all $i=1.\dots,n$.  This contradicts the assumption that $\deg(\aa)=d$. Thus, $\deg(P) \ge \left\lceil\frac{d}{n-1}\right\rceil$.

We will prove that the lower bound $\left\lceil\frac{d}{n-1}\right\rceil$ is sharp 
by showing that,  for all $n>1$ and $d>0$, the following matrix 
\beq
\mf = \left[\begin{array}
[c]{ccc|ccccc}%
1 &          &     & -s^{d-k\left\lceil\frac{d}{n-1}\right\rceil} &         &                 &      &   \\
  & \ddots &    &           &           &                &        & \\
  &            & 1 &           &           &                &         &\\\hline
  &             &    & 1       & -s^{\left\lceil\frac{d}{n-1}\right\rceil} &               &          &\\
   &             &    &       &   1       &  -s^{\left\lceil\frac{d}{n-1}\right\rceil}      &         & \\
  &             &    &        &             & \ddots    &    \ddots & \\
   &             &    &        &             &             &\ddots   &-s^{\left\lceil\frac{d}{n-1}\right\rceil} \\
     &             &    &        &             &             &         &1
\end{array}
\right]
\eeq
has degree $\left\lceil\frac{d}{n-1}\right\rceil$ and is a degree-optimal moving frame at the vector
\beq \aa = \left[1,0,\ldots,0,s^{d-k\cdot\left\lceil\frac{d}{n-1}\right\rceil},\ldots,s^{d-2\cdot\left\lceil\frac{d}{n-1}\right\rceil},s^{d-1\cdot\left\lceil\frac{d}{n-1}\right\rceil},s^{d-0\cdot\left\lceil\frac{d}{n-1}\right\rceil}\right].\eeq
Here  $k \in \mathbb{N}$ is the maximal such that $d>k\left\lceil\frac{d}{n-1}\right\rceil$ (explicitly $k=\left\lceil\frac d{\left\lceil\frac{d}{n-1}\right\rceil}\right\rceil -1$), the number of zeros in $\aa$   is $n-k-2$, the upper-left block of $P$ is of the size $(n-k-1)\times (n-k-1)$, the lower-right block is of the size $(k+1)\times (k+1)$, and the other two blocks are of the appropriate sizes.

First, we show that such $\aa$ and $P$ actually exist (not just optically). That is, the number of zeros in $\aa$ is non-negative, and the upper-left block in $P$ exists; 
 in other words, $n-1 \geq k+1$. Suppose otherwise. Then we would have
$$d-k\left\lceil\frac{d}{n-1}\right\rceil \le d-(n-1)\left\lceil\frac{d}{n-1}\right\rceil \le 0$$
which contradicts the condition  $d>k\left\lceil\frac{d}{n-1}\right\rceil$. 

Second, $P$ is a degree-optimal moving frame at $\aa$. Namely,
\begin{enumerate}
\item $\aa P = [1,0,\ldots,0]$, so $P$ is a moving frame at $\aa$.
\item The first column of $P$, $[1,0,\ldots,0]^T$, is a minimal-degree B\'ezout vector of $\aa$.
\item The last $n-1$ columns of $P$ are syzygies of $\aa$, and since $P \in \operatorname{mf}(\aa)$, by Proposition \ref{prop-mfbasis}, they form a basis of $\operatorname{syz}(\aa)$.  It is easy to see that these columns have linearly independent leading vectors as well. Thus, they form a $\mu$-basis of $\aa$.
\end{enumerate}

Finally, we show that the degree of $P$ is the lower bound, i.e. $\left\lceil\frac{d}{n-1}\right\rceil$. 
 From inspection of the entries  of $P$, we see immediately that $$\deg(P)=\max\left\{d-k\left\lceil\frac{d}{n-1}\right\rceil, \left\lceil\frac{d}{n-1}\right\rceil\right\}.$$ It remains to show that $d-k\left\lceil\frac{d}{n-1}\right\rceil \le \left\lceil\frac{d}{n-1}\right\rceil$. Suppose not.  Then $$d>(k+1)\left\lceil\frac{d}{n-1}\right\rceil,$$ a contradiction to the maximality of $k$. %
Thus, $\deg(P)=\left\lceil\frac{ d}{n-1}\right\rceil$. Hence, we have proved that the lower bound is sharp.

\item (upper bound): From Theorems~\ref{thm-bez} and~\ref{thm-mu}, it follows immediately that $d$ is an upper bound of a degree-optimal moving frames. We will prove that the upper bound $d$  is sharp
by showing that,  for all $n>1$ and $d>0$, the following matrix of degree~$d$
\beq
P =\left[
\begin{array}[c]{ccccc}
1  &  &  &  & -s^{d}\\
   &  \ddots &  &  & \\
   &  & \ddots &  & \\
   &  & & \ddots & \\
   &  & &  & 1
\end{array}
\right].
\eeq
is the degree-optimal moving frame for the vector
$$ \aa = [1, 0,\dots, 0,s^d]$$
Indeed:
\begin{enumerate}
\item $\aa P = [1,0,\ldots,0]$ and  so $P$ is a moving frame at $\aa$.
\item The first column of $P$, $[1,0,\ldots,0]^T$, is a minimal-degree B\'ezout vector of $\aa$.
\item The last $n-1$ columns of $P$ are syzygies of $\aa$, and since $P \in \operatorname{mf}(\aa)$, by Proposition \ref{prop-mfbasis}, they form a basis of $\operatorname{syz}(\aa)$.  It is easy to see that these columns have linearly independent leading vectors as well. Thus, they form a $\mu$-basis of $\aa$.
\end{enumerate}
\end{enumerate}
\end{proof}

In Theorem \ref{generic-deg} below, we show that for generic $\aa \in \mathbb{K}[s]^n$ with $\deg(\aa)=d$ and $\gcd(\aa)=1$, and for all degree-optimal moving frames $P$ at $\aa$, $\deg(P) = \left\lceil \frac{d}{n-1} \right\rceil$. To prove the theorem, we need the following lemmas, where  we will use notation $$k = \operatorname{quo}(d,n-1)\text{ and }r = \operatorname{rem}(d,n-1).$$
\begin{lemma}\label{Cmatrix}
For arbitrary $\aa \in \mathbb{K}[s]^n$ with $\deg(\aa)=d$ and $\gcd(\aa)=1$, the principal $d+k+1$ submatrix of the associated matrix $A$ has the form
\begin{equation}\label{generic-matrix}
C = \left[
\begin{array}
[c]{cccccccccccc}%
c_{01} & \cdots & \cdots & c_{0n} &  &  &  &  &  &  & \\
\vdots & \cdots & \cdots & \vdots & c_{01} & \cdots & \cdots & c_{0n} &  &  &  & \\
\vdots & \cdots & \cdots & \vdots & \vdots & \cdots & \cdots & \vdots & \ddots &  &  & \\
c_{d1} & \cdots & \cdots & c_{dn} & \vdots & \cdots & \cdots & \vdots & \ddots & c_{01} & \cdots & c_{0,r+1}\\
&  & &  & c_{d1} & \cdots & \cdots & c_{dn} & \ddots & \vdots & \cdots & \vdots\\
& &  & &  &  &  &  & \ddots & \vdots & \cdots & \vdots\\
& & & & &  &  &  &  & c_{d1} & \cdots & c_{d,r+1}%
\end{array}
\right],
\end{equation}
where $C$ consists of $k$ full $(d+1)\times n$ size blocks and 1 partial block of size $(d+1)\times(r+1)$.
\end{lemma}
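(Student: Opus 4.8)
The plan is to read off the claim directly from the definition of $A$ in \eqref{eq-A} by keeping careful track of row and column indices; no idea beyond bookkeeping is needed, but the indices must be handled with care. First I would fix the convention that $A$ is assembled from $d+1$ copies of the $(d+1)\times n$ coefficient block $[c_{ij}]_{0\le i\le d,\ 1\le j\le n}$ of $\aa$, the $g$-th copy (for $g=1,\dots,d+1$) occupying rows $g,\dots,g+d$ and columns $(g-1)n+1,\dots,gn$, with all other entries of $A$ equal to zero. The principal submatrix in question is the one formed by the first $d+k+1$ rows and the first $d+k+1$ columns; since $k\le d$ one has $d+k+1\le 2d+1$, and since $r\le n-2$ one has $d+k+1\le n(d+1)$, so this submatrix is well-defined.

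Next I would use the division identity $d=k(n-1)+r$ to rewrite the order of the submatrix as $d+k+1 = kn+(r+1)$. This shows that its column range $\{1,\dots,d+k+1\}$ decomposes as the $kn$ columns of the first $k$ copies of the coefficient block (columns $1,\dots,kn$) together with the first $r+1$ columns of the $(k+1)$-st copy (columns $kn+1,\dots,kn+r+1$), which carry the coefficients $c_{i1},\dots,c_{i,r+1}$. It then remains to check that nothing is clipped vertically or contributed spuriously: for $g\le k+1$ the $g$-th copy ends in row $g+d\le d+k+1$, so every one of its $d+1$ rows lies inside the chosen row range; in particular the partial $(k+1)$-st copy occupies exactly rows $k+1,\dots,d+k+1$. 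Conversely, any copy with $g\ge k+2$ begins in column $(g-1)n+1\ge(k+1)n+1>kn+r+1$ (using $r\le n-2$), hence lies entirely to the right of the principal submatrix and contributes only zeros. Assembling these observations gives precisely the matrix $C$ of \eqref{generic-matrix}: $k$ full $(d+1)\times n$ blocks followed by one $(d+1)\times(r+1)$ partial block.

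I do not expect a genuine obstacle here, since the statement is essentially a restatement of the shape of $A$; the one point demanding attention is the verification that the partial $(k+1)$-st block is \emph{not} truncated in the row direction — it exhausts the last available row, $d+k+1$ — together with the arithmetic $kn+(r+1)=d+k+1$, which is what places its $r+1$ columns exactly at the right edge of the principal submatrix. This is the step where the specific choices $k=\operatorname{quo}(d,n-1)$ and $r=\operatorname{rem}(d,n-1)$ are used, and I would also note that the argument is uniform in the edge cases $k=0$ (then $r=d$ and $C$ is simply the first $d+1$ columns of a single block) and $r=0$ (then the partial block degenerates to a single column).
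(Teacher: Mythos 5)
Your proposal is correct and follows essentially the same route as the paper's proof, which is just the two counting identities $nk+(r+1)=d+k+1$ for the columns and $(d+1)+k=d+k+1$ for the rows; your version simply spells out the index bookkeeping (which block copies intersect the principal submatrix and which lie entirely to its right) in more detail.
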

\begin{proof}
If we take $k$ full $(d+1)\times n$ blocks and 1 partial $(d+1)\times(r+1)$ block, then the number of columns of $C$ is $nk+r+1 = (n-1)k+r+k+1 = d+k+1$, as desired. Furthemore, since the leftmost block takes up the first $d+1$ rows of $C$, and we shift the block down by 1 a total of $k$ times, the number of rows of $C$ is $d+k+1$ as well.
\end{proof}
\begin{lemma}\label{Cnonsingular}
Let $\aa \in \mathbb{K}[s]^n$ with $\deg(\aa)=d$ and $\gcd(\aa)=1$, and let $C$ be the principal $d+k+1$ submatrix of $A$ given by \eqref{generic-matrix}. If $C$ is nonsingular, then for any degree-optimal moving frame $\mf$ at $\aa$, we have $\deg(P) = \left\lceil\frac{d}{n-1}\right\rceil$.
\end{lemma}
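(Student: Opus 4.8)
By Proposition~\ref{deg} we already have $\deg(P)\ge\lceil\frac{d}{n-1}\rceil$ for every degree-optimal moving frame $P$ at $\aa$, so the whole content is the matching upper bound. I would reduce this to locating a single integer: by Lemma~\ref{lem-degree}, $\deg(P)=\lceil\frac{\kappa}{n}\rceil-1$, where $\kappa$ is the largest basic non-pivotal index of the associated matrix $A$. Thus it suffices to show that $\lceil\frac{\kappa}{n}\rceil-1=\lceil\frac{d}{n-1}\rceil$. Recalling $k=\operatorname{quo}(d,n-1)$, $r=\operatorname{rem}(d,n-1)$, so $d=(n-1)k+r$ with $0\le r\le n-2$, the target is $\lceil\frac{d}{n-1}\rceil=k$ when $r=0$ and $k+1$ when $r>0$; equivalently I must show $\kappa=n(k+1)$ when $r=0$, and $n(k+1)<\kappa<n(k+2)$ when $r>0$.

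The single structural fact I would lean on is the row support of the column blocks of $A$: $A$ is built from blocks $1,\dots,d+1$ of $n$ columns each, and block $j$ (columns $n(j-1)+1,\dots,nj$) is supported in rows $j,\dots,j+d$. Hence the first $d+k+1=nk+r+1$ columns — which are all of blocks $1,\dots,k$ together with the first $r+1$ columns of block $k+1$ — are supported in rows $1,\dots,d+k+1$, and the submatrix they span is exactly the $C$ of Lemma~\ref{Cmatrix}. So $C$ nonsingular forces columns $1,\dots,d+k+1$ of $A$ to be pivotal. More generally, columns $1,\dots,n(k+1)$ (all of blocks $1,\dots,k+1$) are still supported in rows $1,\dots,d+k+1$, so their rank is at most, hence exactly, $d+k+1$; therefore columns $d+k+2,\dots,n(k+1)$ — positions $r+2,\dots,n$ of block $k+1$ — are non-pivotal. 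By the periodicity of Lemma~\ref{periodic} these positions stay non-pivotal in every later block, so these $n-r-1$ columns in block $k+1$ are basic non-pivotal indices, the largest of them being $n(k+1)$ (attained at position $n$, legitimate since $r\le n-2$).

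If $r=0$ these are already $n-1$ basic non-pivotal indices, which by Lemma~\ref{lem-card} is all of them, so $\kappa=n(k+1)$ and $\deg(P)=k=\lceil\frac{d}{n-1}\rceil$. If $r>0$, the remaining $r$ basic non-pivotal indices must sit at positions $1,\dots,r+1$, and those positions are pivotal throughout blocks $1,\dots,k+1$ since the corresponding columns all lie among the first $d+k+1$ columns. Now columns $1,\dots,n(k+2)$ are supported in rows $1,\dots,d+k+2$, so the rank of the first $n(k+2)$ columns exceeds that of the first $n(k+1)$ columns by at most one; hence at most one column of block $k+2$ is pivotal. Since $n-r-1$ columns of block $k+2$ (positions $r+2,\dots,n$) are already non-pivotal by periodicity, at least $r$ of the remaining $r+1$ columns (positions $1,\dots,r+1$) are non-pivotal, and they are new basic non-pivotal indices; matching against the count $n-1$ forces exactly $r$ of them, so all basic non-pivotal indices are accounted for and the last $r$ have indices in $\{n(k+1)+1,\dots,n(k+1)+r+1\}\subseteq\{n(k+1)+1,\dots,n(k+2)-1\}$ (using $r+1\le n-1$). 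Thus $n(k+1)<\kappa<n(k+2)$, so $\lceil\frac{\kappa}{n}\rceil=k+2$ and $\deg(P)=k+1=\lceil\frac{d}{n-1}\rceil$.

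The conceptual core is the two-line "row support $\Rightarrow$ rank bound" combined with periodicity; the main obstacle — really a bookkeeping obstacle — is making the counting airtight: one must check that the non-pivotal columns found in block $k+2$ are genuinely \emph{basic} (this is exactly where nonsingularity of $C$ enters, guaranteeing positions $1,\dots,r+1$ are pivotal through block $k+1$), and then use $|\tilde q|=n-1$ to rule out any further basic non-pivotal index appearing in block $k+3$ or beyond. Keeping the quotient/remainder arithmetic and the block-to-position-to-row translations consistent is where most of the care goes.
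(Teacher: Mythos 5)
Your proposal is correct and follows essentially the same route as the paper's proof: reduce via Lemma~\ref{lem-degree} to locating the largest basic non-pivotal index, use nonsingularity of $C$ to make the first $d+k+1$ columns pivotal, and then count basic non-pivotal indices in blocks $k+1$ and $k+2$ against the total $n-1$ from Lemma~\ref{lem-card}. Your write-up is somewhat more careful than the paper's (you make the row-support rank bound and the verification that the located non-pivotal columns are genuinely \emph{basic} explicit, where the paper asserts these from ``the structure of $A$''), but the argument is the same.
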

\begin{proof}
If $C$ is nonsingular, then first $d+k+1$ columns of the matrix $A$ are pivotal columns. Since $\rank (A) =2d+1$, there are $d-k$ additional pivotal columns in $A$ and, from the structure of $A$,  each of the last $d-k$  blocks of $A$ contain exactly one of these additional pivotal columns. All other columns in $A$ are non-pivotal.   
We now consider two cases:
\begin{enumerate}
\item[1)] If $n-1$ divides $d$, then
 $r=0$ and $k = \frac{d}{n-1} = \left\lceil\frac{d}{n-1}\right\rceil$. Thus, there is one column in the partial block in $C$, and so the remaining $n-1$ columns in this $(k+1)$-th block of $A$ are basic non-pivotal columns. Since in total there are $n-1$ basis non-pivotal columns, the largest basic non-pivotal index equals to  $n(k+1)$, and therefore by Lemma~\ref{lem-degree},  the degree of any optimal moving frame at $\aa$ is $\left\lceil\frac{d}{n-1}\right\rceil$.
\item[2)]  If $n-1$ does not divide $d$, then
 $r > 0$ and $k = \left\lfloor\frac{d}{n-1}\right\rfloor$. Thus, there are at least two columns in the partial block in $C$, and so there are at most $n-2$ basic non-pivotal columns in the $(k+1)$-th block of $A$. Since there are a total of $n-1$ basis non-pivotal columns, and all but one of the columns in the $(k+2)$-th block are non-pivotal, the largest basic non-pivotal column  index will appear in the $(k+2)$-th block. Therefore, this  largest index equals    to $n(k+1)+j$  for some $1\le j \le n$.  By Lemma~\ref{lem-degree},  the degree of any optimal moving frame at $\aa$ equals to $\left\lceil\frac{n(k+1)+j}{n}\right\rceil-1 = k+1 = \left\lfloor\frac{d}{n-1}\right\rfloor+1 = \left\lceil\frac{d}{n-1}\right\rceil$. 
 \end{enumerate}   
\end{proof}
\begin{lemma}\label{det-nonzero}
For all $n>1$ and $d>0$, there exists a vector $\aa \in \mathbb{K}[s]^n$ with $\deg(\aa)=d$ and $\gcd(\aa)=1$ such that $\det(C) \not= 0$.

\end{lemma}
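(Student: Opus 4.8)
The plan is to write down an explicit $\aa$ whose components are single monomials, chosen so that $C$ is a permutation matrix, whence $\det(C)=\pm1\ne0$. Unwinding the definition \eqref{eq-A} of $A$, its column of index $in+j$ (with $0\le i\le d$ and $1\le j\le n$) is the coefficient vector of the polynomial $s^i a_j(s)$; hence, by the block description of $C$ in Lemma~\ref{Cmatrix}, the columns of $C$ are the coefficient vectors of $s^i a_j$ for $0\le i\le k-1$, $1\le j\le n$ (the $k$ full blocks), together with $s^k a_j$ for $1\le j\le r+1$ (the partial block) --- a list of $d+k+1$ polynomials, viewed inside the space of polynomials of degree at most $d+k$. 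If I take $a_j=s^{c_j}$ for nonnegative integers $c_j$, then each such column is a standard basis vector $e_{i+c_j+1}$, so $C$ is a permutation matrix precisely when the $d+k+1$ integers $i+c_j$ that occur are pairwise distinct; since they all lie in $\{0,\dots,d+k\}$, a set of exactly $d+k+1$ elements, this is the same as requiring that the \emph{runs} $\{c_j,\dots,c_j+k\}$ for $1\le j\le r+1$ (length $k+1$) and $\{c_j,\dots,c_j+k-1\}$ for $r+2\le j\le n$ (length $k$) tile $\{0,\dots,d+k\}$.

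The core of the argument is therefore the purely combinatorial fact that such a tiling exists: the lengths sum to $(r+1)(k+1)+(n-1-r)k=(n-1)k+r+k+1=d+k+1$, so one can lay the short runs down first and the long runs afterwards. Concretely I would set $c_j=(j-r-2)k$ for $r+2\le j\le n$ and $c_j=(n-1-r)k+(j-1)(k+1)$ for $1\le j\le r+1$, and then define $\aa=[s^{c_1},\dots,s^{c_n}]$. The remaining checks are routine: $\gcd(\aa)=1$ because $c_{r+2}=0$ forces $a_{r+2}=1$ (the index $r+2\le n$ exists since $r\le n-2$), and $\deg(\aa)=d$ because the largest exponent is $c_{r+1}=(n-1-r)k+r(k+1)=(n-1)k+r=d$ while every short $c_j$ satisfies $c_j\le(n-r-2)k<d$. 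The degenerate case $k=0$, i.e.\ $d<n-1$, is handled by the same formulas with empty short runs: there $C$ is the single $(d+1)\times(d+1)$ partial block, $\aa$ reduces to $[1,s,\dots,s^d,1,\dots,1]$, and $C=I_{d+1}$.

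Finally, with $\aa$ fixed this way I would record the conclusion: the map $(i,j)\mapsto i+c_j$ on the index set of columns of $C$ is a bijection onto $\{0,\dots,d+k\}$ by the tiling property, and $d+k\le 2d$, so every such column genuinely sits within the first $d+k+1$ rows of $A$; hence the columns of $C$ are a permutation of $e_1,\dots,e_{d+k+1}$ and $|\det(C)|=1$. The only delicate point --- and the place where a hasty argument goes wrong --- is the ordering of the runs: putting the long runs first would force the first short run to begin at $d+1$, giving $\deg(\aa)=d+1$ rather than $d$; stacking short-then-long, as above, is exactly what reconciles $\gcd(\aa)=1$ with $\deg(\aa)=d$. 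In the final write-up I would also separately verify the extreme values $r=0$, $r=n-2$, and $n=2$, where off-by-one slips are most likely.
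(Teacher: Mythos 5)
Your proposal is correct and takes essentially the same approach as the paper: both choose $\aa$ with monomial components so that $C$ becomes a permutation matrix, and your exponents $c_j$ are the paper's witnesses up to reordering within the ``long-run'' and ``short-run'' groups. The only difference is presentational — the paper splits into three cases and argues by inspecting examples, while your single tiling formula for $\{0,\dots,d+k\}$ handles all cases uniformly and makes the permutation-matrix claim fully rigorous.
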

\begin{proof}
Let $n>1$ and $d>0.$ We will find a suitable witness for $\mathbf{a}$.
Recalling the relation $d=k\left( n-1\right) +r$, we will consider the
following three cases:

\begin{enumerate}
\item[1)] If $n-1>d$, we claim that the following is a witness: 
\begin{equation*}
\mathbf{a}=\left[s^{d},s^{d-1},\ldots ,s,1,\ldots ,1\right]
\end{equation*}%
Note that there is at least one  $1$ at the end. Thus  $\deg (\mathbf{a})=d$ and $\gcd (\mathbf{a})=1$. It
remains to show that $\left\vert C\right\vert \neq 0$. Note that $k=0$ and $%
r=d.$ Thus, the matrix $C$ is a $(d+1)\times (d+1)$ partial block that looks
like 
\begin{equation*}
C=\left[ 
\begin{array}{ccc}
&  & 1 \\ 
& \iddots &  \\ 
1 &  & 
\end{array}%
\right] .
\end{equation*}%
Therefore, $|C|=\pm 1$.

\item[2)] If $n-1\leq d$ and $n-1$ divides $d$, we claim that the following is
a witness: 
\begin{equation*}
\mathbf{a}=\left[ s^{d},s^{d-k},\ldots ,s^{d-(n-1)k}\right] 
\end{equation*}%
Note that the last component is $s^{d-(n-1)k}=s^{0}=1$. Thus $\deg (\mathbf{a})=d$ and $\gcd (\mathbf{a})=1$. It remains to show
that $\left\vert C\right\vert \neq 0$. To do this, we examine the shape of $C
$. To get intuition, consider the instance where $n=3$ and $d=6$. Note that $k=3$
and $r=0$. Thus, we have%
\begin{eqnarray*}
a &=&\left[ s^{6},s^{3},s^{0}\right]  \\
C &=&\left[ 
\begin{array}{cccccccccc}
0 & 0 & \mathbf{1} &  &  &  &  &  &  &  \\ 
0 & 0 & 0 & 0 & 0 & \mathbf{1} &  &  &  &  \\ 
0 & 0 & 0 & 0 & 0 & 0 & 0 & 0 & \mathbf{1} &  \\ 
0 & \mathbf{1} & 0 & 0 & 0 & 0 & 0 & 0 & 0 & 0 \\ 
0 & 0 & 0 & 0 & \mathbf{1} & 0 & 0 & 0 & 0 & 0 \\ 
0 & 0 & 0 & 0 & 0 & 0 & 0 & \mathbf{1} & 0 & 0 \\ 
\mathbf{1} & 0 & 0 & 0 & 0 & 0 & 0 & 0 & 0 & 0 \\ 
&  &  & \mathbf{1} & 0 & 0 & 0 & 0 & 0 & 0 \\ 
&  &  &  &  &  & \mathbf{1} & 0 & 0 & 0 \\ 
&  &  &  &  &  &  &  &  & \mathbf{1}%
\end{array}%
\right] 
\end{eqnarray*}%
All the empty spaces are zeros. Note that $C$ is a permutation matrix (each
row has only one $1$ and each column has only one $1$). Therefore, $|C|=\pm 1
$. It is easy to see that the same observation holds in general.

\item[3)] If $n-1\leq d$ and $n-1$ does not divide $d$, we claim that the
following is a witness: 
\begin{equation*}
\mathbf{a}=\left[ s^{d},s^{d-(1k+1)},s^{d-(2k+2)}\ldots
,s^{d-(rk+r)},s^{d-((r+1)k+r)},\ldots ,s^{d-((n-1)k+r)}\right] 
\end{equation*}

Note that the last component is $s^{d-((n-1)k+r)}=s^{0}=1$. Thus $\deg (\mathbf{a})=d$ and $\gcd (\mathbf{a})=1$. It remains to
show that $\left\vert C\right\vert \neq 0$. To do this, we examine the shape
of $C$. To get intuition, consider the case $n=5$ and $d=14$. Note that $k=3$ and $%
r=2.$ Thus, we have%
\begin{small}\begin{eqnarray*}
a &=&\left[ s^{14},s^{14-\left( 1\cdot 3+1\right) },s^{14-\left( 2\cdot
3+2\right) },s^{14-\left( 3\cdot 3+2\right) },s^{14-\left( 4\cdot 3+2\right)
}\right] =\left[ s^{14},s^{10},s^{6},s^{3},s^{0}\right]  \\
C &=&\left[ 
\begin{array}{cccccccccccccccccc}
0 & 0 & 0 & 0 & \mathbf{1} &  &  &  &  &  &  &  &  &  &  &  &  &  \\ 
0 & 0 & 0 & 0 & 0 & 0 & 0 & 0 & 0 & \mathbf{1} &  &  &  &  &  &  &  &  \\ 
0 & 0 & 0 & 0 & 0 & 0 & 0 & 0 & 0 & 0 & 0 & 0 & 0 & 0 & \mathbf{1} &  &  & 
\\ 
0 & 0 & 0 & \mathbf{1} & 0 & 0 & 0 & 0 & 0 & 0 & 0 & 0 & 0 & 0 & 0 & 0 & 0 & 
0 \\ 
0 & 0 & 0 & 0 & 0 & 0 & 0 & 0 & \mathbf{1} & 0 & 0 & 0 & 0 & 0 & 0 & 0 & 0 & 
0 \\ 
0 & 0 & 0 & 0 & 0 & 0 & 0 & 0 & 0 & 0 & 0 & 0 & 0 & \mathbf{1} & 0 & 0 & 0 & 
0 \\ 
0 & 0 & \mathbf{1} & 0 & 0 & 0 & 0 & 0 & 0 & 0 & 0 & 0 & 0 & 0 & 0 & 0 & 0 & 
0 \\ 
0 & 0 & 0 & 0 & 0 & 0 & 0 & \mathbf{1} & 0 & 0 & 0 & 0 & 0 & 0 & 0 & 0 & 0 & 
0 \\ 
0 & 0 & 0 & 0 & 0 & 0 & 0 & 0 & 0 & 0 & 0 & 0 & \mathbf{1} & 0 & 0 & 0 & 0 & 
0 \\ 
0 & 0 & 0 & 0 & 0 & 0 & 0 & 0 & 0 & 0 & 0 & 0 & 0 & 0 & 0 & 0 & 0 & \mathbf{1%
} \\ 
0 & \mathbf{1} & 0 & 0 & 0 & 0 & 0 & 0 & 0 & 0 & 0 & 0 & 0 & 0 & 0 & 0 & 0 & 
0 \\ 
0 & 0 & 0 & 0 & 0 & 0 & \mathbf{1} & 0 & 0 & 0 & 0 & 0 & 0 & 0 & 0 & 0 & 0 & 
0 \\ 
0 & 0 & 0 & 0 & 0 & 0 & 0 & 0 & 0 & 0 & 0 & \mathbf{1} & 0 & 0 & 0 & 0 & 0 & 
0 \\ 
0 & 0 & 0 & 0 & 0 & 0 & 0 & 0 & 0 & 0 & 0 & 0 & 0 & 0 & 0 & 0 & \mathbf{1} & 
0 \\ 
\mathbf{1} & 0 & 0 & 0 & 0 & 0 & 0 & 0 & 0 & 0 & 0 & 0 & 0 & 0 & 0 & 0 & 0 & 
0 \\ 
&  &  &  &  & \mathbf{1} & 0 & 0 & 0 & 0 & 0 & 0 & 0 & 0 & 0 & 0 & 0 & 0 \\ 
&  &  &  &  &  &  &  &  &  & \mathbf{1} & 0 & 0 & 0 & 0 & 0 & 0 & 0 \\ 
&  &  &  &  &  &  &  &  &  &  &  &  &  &  & \mathbf{1} & 0 & 0%
\end{array}%
\right] 
\end{eqnarray*}%
\end{small}
All the empty spaces are zeros. Note that $C$ is a permutation matrix (each
row has only one $1$ and each column has only one $1$). Therefore, $|C|=\pm 1
$. It is easy to see that the same observation holds in general.
\end{enumerate}
\end{proof}

\begin{theorem}[Generic Degree.]\label{generic-deg}
Let $\mathbb{K}$ be an infinite field. For generic $\aa \in \mathbb{K}[s]^n$ with $\deg(\aa)=d$ and $\gcd(\aa)=1$, for every degree-optimal moving frame $P$ at $\aa$, we have $\deg(P) = \left\lceil\frac{d}{n-1}\right\rceil$.
\end{theorem}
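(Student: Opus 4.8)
The plan is to exhibit a nonempty Zariski-open subset $U$ of the coefficient space $\mathbb{K}^{n(d+1)}$ of vectors of degree at most $d$ on which all three desired properties hold simultaneously: $\deg(\aa)=d$, $\gcd(\aa)=1$, and $\deg(P)=\lceil d/(n-1)\rceil$ for every degree-optimal moving frame $P$ at $\aa$. Since $\mathbb{K}$ is infinite, a nonempty Zariski-open set is Zariski-dense, which is exactly what ``generic'' means here. The natural candidate is $U=\{\aa : \det(C)\neq 0\}$, where $C$ is the principal $(d+k+1)$-submatrix of the associated Sylvester-type matrix $A$, as described in Lemma~\ref{Cmatrix}. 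First I would note that $\det(C)$ is a polynomial in the entries $c_{ij}$ of $\aa$, so $U$ is Zariski-open; by Lemma~\ref{det-nonzero} it is nonempty (indeed the witnesses constructed there give $\det(C)=\pm1$), hence dense.

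The heart of the argument is to check that every $\aa\in U$ already satisfies the standing hypotheses $\deg(\aa)=d$ and $\gcd(\aa)=1$; this is where care is needed, since Lemma~\ref{Cnonsingular} presupposes $\gcd(\aa)=1$ and so cannot be invoked before this is established. For the degree: by the explicit shape of $C$ in Lemma~\ref{Cmatrix}, its bottom row is $(0,\dots,0,c_{d1},\dots,c_{d,r+1})$, so $\det(C)\neq0$ forces $(c_{d1},\dots,c_{dn})\neq 0$, i.e.\ $\deg(\aa)=d$. For the gcd: by Lemma~\ref{lem-aA} the column of $A$ in position $in+j$ equals $(a_j s^i)^{\sharp}$, so the first $d+k+1=nk+r+1$ columns of $A$ are the coordinate vectors of the polynomials $a_j s^i$ with $0\le i\le k-1$, $1\le j\le n$, together with $a_j s^k$ for $1\le j\le r+1$. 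Each of these polynomials has degree at most $d+k\le 2d$, so the corresponding columns of $A$ have no nonzero entries below row $d+k+1$; consequently these $d+k+1$ columns are $\mathbb{K}$-linearly independent if and only if $\det(C)\neq0$, if and only if the $d+k+1$ polynomials themselves are $\mathbb{K}$-linearly independent. If $\gcd(\aa)$ had positive degree $e$, all of these polynomials would be divisible by it and hence lie in the $(d+k+1-e)$-dimensional space $\gcd(\aa)\cdot\mathbb{K}[s]_{d+k-e}$, so they could not be independent --- a contradiction. Therefore $\gcd(\aa)=1$.

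With $\deg(\aa)=d$, $\gcd(\aa)=1$, and $C$ nonsingular in hand, Lemma~\ref{Cnonsingular} applies verbatim and yields $\deg(P)=\lceil d/(n-1)\rceil$ for every degree-optimal moving frame $P$ at $\aa$, which completes the proof. I expect the only genuine obstacle to be the middle step --- extracting $\gcd(\aa)=1$ from $\det(C)\neq0$ without circularity --- and the dimension count on the polynomials $a_j s^i$, powered by the observation that their coordinate vectors vanish below row $d+k+1$, is precisely what removes the apparent circularity.
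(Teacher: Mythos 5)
Your proof is correct and follows essentially the same route as the paper: genericity via the nonvanishing of $\det(C)$ (Lemmas~\ref{Cmatrix} and~\ref{det-nonzero}) followed by Lemma~\ref{Cnonsingular}. Your additional verification that $\det(C)\neq 0$ already forces $\deg(\aa)=d$ and $\gcd(\aa)=1$ --- via the observation that the first $d+k+1$ columns of $A$ are the coordinate vectors of the polynomials $a_j s^i$, which cannot be linearly independent if $\gcd(\aa)$ had positive degree --- is sound and tightens a point the paper's proof leaves implicit.
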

\begin{proof}
From Lemma~\ref{det-nonzero}, it follows that $\det(C)$ is a nonzero polynomial on the $n(d+1)$-dimensional vector space $\k[s]^n$ over $\k$. Thus, the condition $\det(C)\neq 0$ defines a proper Zariski open subset of $\k[s]^n$.  Lemma \ref{Cnonsingular} implies that for every $\aa$ in this Zariski open subset, every degree-optimal moving frame $P$ at $\aa$ has degree $\left\lceil\frac{d}{n-1}\right\rceil$. If we assume $\mathbb{K}$ is an infinite field, then the complement of any proper Zariski open subset is of measure zero, and we can say that for a generic $\aa$, the degree of every degree-optimal moving frame at $\aa$ equals the sharp lower bound $\left\lceil\frac{d}{n-1}\right\rceil$.
\end{proof}

{\Blue
\begin{remark} Some simple consequences of the general results about the degrees  are  worthwhile  recording.  From Proposition~\ref{deg}, it follows that, when $d\geq n$, the  degree of an optimal  moving frame is always strictly greater than 1.  From the above theorem and Theorem~\ref{bez-mu}, it follows that when $d<n$ and $\k$  is infinite, then for a 
generic input, the   degree of an optimal  moving frame is 1 and   the minimal-degree \bez vector is a constant vector.  
\end{remark}
}

\section{The OMF-Algorithm}

\label{sect-alg}
The theory developed in Sections \ref{sect-mf-bv-syz} and \ref{sect-la} can be recast into an algorithm
for computing a degree-optimal moving frame.
In this section, $\mathrm{quo}(i,j)$ denotes the quotient and $\mathrm{rem}%
(i,j)$ denotes the remainder generated by dividing an integer $i$ by an
integer $j$.

\begin{algorithm}[\textbf{OMF}]\hfill
\label{alg-OMF}
\begin{description}
\item[\textit{Input:}] $\mathbf{a }\not = 0 \in\mathbb{K}[s]^{n}$, row vector,
where $n>1$, $\gcd(\mathbf{a})=1$, and $\mathbb{K}$ a computable field

\item[\textit{Output:}] $P\in\mathbb{K}[s]^{n\times n}$, a degree-optimal moving
frame at $\mathbf{a}$
\end{description}

\begin{enumerate}
\item \emph{Construct a matrix $W \in\mathbb{K}^{(2d+1)\times(nd+n+1)} $,
whose left $(2d+1)\times (nd+n)$ block is matrix \eqref{eq-A} and whose last
column is $e_{1}$.} 

\begin{enumerate}

\item $d\longleftarrow\deg(\mathbf{a})$

\item Identify the row vectors $c_{0}=[c_{01}, \dots c_{0n}],\ldots,c_{d}=
[c_{d1}, \dots c_{dn}]$ such that $\mathbf{a}=c_{0}+c_{1}s+\cdots+c_{d}s^{d}$.

\item $W\longleftarrow\left[
\begin{array}
[c]{ccc}%
c_{0} &  & \\
\vdots & \ddots & \\
c_{d} & \vdots & c_{0}\\
& \ddots & \vdots\\
&  & c_{d}%
\end{array}
\right|  \left.
\begin{array}
[c]{c}%
1\\
0\\
\\
\vdots\\
\\
0
\end{array}
\right] \in\mathbb{K}^{(2d+1)\times(nd+n+1)}$

\end{enumerate}

\item \emph{Construct the ``partial'' reduced row-echelon form $E$ of $W$.}

This can be done by using Gauss-Jordan elimination (forward elimination,
backward elimination, and normalization), with the following optimizations:

\begin{itemize}
\item Skip over periodic non-pivot columns.

\item Carry out the row operations only on the required columns.
\end{itemize}

\item \emph{Construct a matrix $P\in\mathbb{K}[s]^{n\times n}$ whose first
column is a {B\'ezout} vector of $\aa$ of minimal degree and whose last $n-1$ columns form
a $\mu$-basis of $\mathbf{a}$.}

Let $p$ be the list of the pivotal indices and let $\tilde{q}$ be the list of
the basic non-pivotal indices of $E$.

\begin{enumerate}
\item Initialize an $n\times n$ matrix $P$ with $0$ in every entry.

\item For $j=2,\ldots,n$

\qquad$r\leftarrow\operatorname*{rem}\left(  \tilde{q}_{j-1}-1,n\right)  +1$

\qquad$k\leftarrow\operatorname*{quo}\left(  \tilde{q}_{j-1}-1,n\right)  $

\qquad$P_{r,j\ }\leftarrow P_{r,j}+s^{k}$

\item For $i=1,\ldots,2d+1$

\qquad$r\leftarrow\operatorname*{rem}\left(  p_{i}-1,n\right)  +1$

\qquad$k\leftarrow\operatorname*{quo}\left(  p_{i}-1,n\right)  $

\qquad$P_{r,1} \leftarrow P_{r,1}+E_{i,nd+n+1}s^{k}$

\qquad For $j=2,\ldots,n$

\qquad$\qquad P_{r,j}\leftarrow P_{r,j}-E_{i,\tilde{q}_{j-1}}s^{k}$
\end{enumerate}
\end{enumerate}
\end{algorithm}

\begin{remark}\rm
Step 3 of the OMF algorithm consists of constructing the moving frame $P$ from the entries of $E$. This step can be completed by explicitly constructing the nullspace vectors of $A$ corresponding to the $n-1$ basic non-pivotal columns of $E$ and the solution vector $v$ to $Av = e_1$ corresponding to the last column of $E$; and then translating these vectors into polynomial vectors using the $\flat$ isomorphism. However, this does some wasteful operations. The matrix $E$ contains all of the information needed to construct $P$, so we only need to read off the desired entries of $E$ instead of constructing entire vectors. This is what is done in step 3. Step 3(b) computes the leading polynomial entry for each $\mu$-basis column corresponding to the index of the corresponding basic non-pivotal column, while step 3(c) computes the remaining entries in the $\mu$-basis columns and the entries of the B\'ezout vector column corresponding to the indices of the pivot columns.
\end{remark}

\begin{theorem}
The output of the OMF Algorithm is a degree-optimal moving frame at $\mathbf{a}$,
where $\mathbf{a}$ is the input vector $\mathbf{a }\in\mathbb{K}[s]^{n}$ such
that $n>1$ and $\gcd(\mathbf{a})=1$.
\end{theorem}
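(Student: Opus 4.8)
The plan is to show that Step~3 of Algorithm~\ref{alg-OMF} reproduces, column by column, the explicit constructions of Theorems~\ref{thm-bez} and~\ref{thm-mu}, and then to invoke Theorem~\ref{thm-bbomf}. Concretely: the (partial) reduced row-echelon form $E$ of $W=[A\mid e_1]$ carries exactly the coefficients appearing in \eqref{eq-e1comb} and \eqref{eqAij}; Step~3 reads those off and, via the isomorphism $\flat$, places them into the columns of $P$. I would verify that this yields precisely $P_{*1}=v^{\flat}$ for the vector $v$ of Theorem~\ref{thm-bez} and $P_{*j}=b_{j-1}^{\flat}=\uu_{j-1}$ for $j=2,\dots,n$ for the vectors $b_{j-1}$ and $\mu$-basis $\uu_{j-1}$ of Theorem~\ref{thm-mu}. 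Granting that, Theorem~\ref{thm-bez} gives that $P_{*1}$ is a B\'ezout vector of $\aa$ of minimal degree, Theorem~\ref{thm-mu} gives that $P_{*2},\dots,P_{*n}$ is a degree-ordered $\mu$-basis of $\aa$, and Theorem~\ref{thm-bbomf} then gives that $P$ is a degree-optimal moving frame at $\aa$.

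First I would record what $E$ encodes. Since $\gcd(\aa)=1$, Proposition~\ref{lem-rank} gives $\operatorname{rank}(A)=2d+1$, so $e_1$ lies in the column span of $A$, the augmented column of $W$ is non-pivotal, and $A$ and $E$ share the same $2d+1$ pivotal indices $p=\{p_1,\dots,p_{2d+1}\}$ and the same $n-1$ basic non-pivotal indices $\tilde q=\{\tilde q_1,\dots,\tilde q_{n-1}\}$ by Lemma~\ref{lem-card}. Writing the elimination as $E=M\,W$ with $M$ invertible, one has $MA_{*p_i}=e_i\in\k^{2d+1}$, whence $e_1=\sum_i E_{i,nd+n+1}\,A_{*p_i}$ and, for each basic non-pivotal column, $A_{*\tilde q_{j-1}}=\sum_{p_i<\tilde q_{j-1}}E_{i,\tilde q_{j-1}}\,A_{*p_i}$, where the entries $E_{i,\tilde q_{j-1}}$ with $p_i>\tilde q_{j-1}$ vanish by the echelon shape. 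These are exactly the coefficients $\alpha_j$ of \eqref{eq-e1comb} and $\alpha_{ir}$ of \eqref{eqAij}.

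Next I would match the bookkeeping of Step~3 to the map $\flat$. For an index $c$ with $k=\operatorname{quo}(c-1,n)$ and $r=\operatorname{rem}(c-1,n)+1$, the $\flat$-image of the $c$-th standard basis vector of $\k^{n(d+1)}$ is $s^{k}$ in the $r$-th component. Hence Step~3(c), run over the pivotal indices $p_i$ with weight $E_{i,nd+n+1}$, assembles exactly $P_{*1}=v^{\flat}$; while for each $j\in\{2,\dots,n\}$, Step~3(b) places the term coming from the entry $1$ in position $\tilde q_{j-1}$ of $b_{j-1}$, and Step~3(c) subtracts $E_{i,\tilde q_{j-1}}\,s^{\operatorname{quo}(p_i-1,n)}$ in component $\operatorname{rem}(p_i-1,n)+1$ for each pivotal $p_i$, so that together $P_{*j}=b_{j-1}^{\flat}$ (the terms with $p_i>\tilde q_{j-1}$ being harmless, as the corresponding $E$-entries are $0$). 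I would also dispatch Step~2's optimization here: skipping periodic non-pivotal columns (legitimately flagged non-pivotal via the periodicity in Lemma~\ref{periodic}) and restricting row operations to the columns used later does not alter the set of pivotal indices, since pivotality of column $c$ is decided by the columns of index $<c$ processed left to right; and it leaves the entries in the pivotal, basic non-pivotal, and augmented columns equal to their full reduced-row-echelon values, since those entries are just the coordinates of the original columns with respect to the preceding pivot columns. Thus the entries read in Step~3 are the correct ones and the identifications above are valid.

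The main obstacle is purely notational: keeping the index arithmetic $r=\operatorname{rem}(\cdot,n)+1$, $k=\operatorname{quo}(\cdot,n)$ in Step~3 synchronized with the block structure of $\flat$ and with the vectors $v$, $b_i$ of Theorems~\ref{thm-bez} and~\ref{thm-mu}, and convincing oneself once and for all that the ``partial'' reduction discards nothing that Step~3 needs. There is no conceptual gap: once $P_{*1}$ is identified as a minimal-degree B\'ezout vector of $\aa$ and $P_{*2},\dots,P_{*n}$ as a degree-ordered $\mu$-basis of $\aa$, Theorem~\ref{thm-bbomf} closes the argument.
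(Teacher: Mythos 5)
Your proposal is correct and follows essentially the same route as the paper's own proof: read the coefficients of \eqref{eq-e1comb} and \eqref{eqAij} off the partial reduced row-echelon form $E$, identify the resulting columns of $P$ with the minimal-degree B\'ezout vector of Theorem~\ref{thm-bez} and the degree-ordered $\mu$-basis of Theorem~\ref{thm-mu}, and conclude via Theorem~\ref{thm-bbomf}. Your write-up is in fact more detailed than the paper's, in particular in justifying the index arithmetic of Step~3 and the harmlessness of the ``partial'' reduction.
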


\begin{proof}
In step 1, we construct a matrix $W=[A\text{ $|$ }e_{1}] \in\mathbb{K}^{(2d+1)\times
(nd+n+1)} $ whose left $(2d+1)\times(nd+n)$ block is matrix \eqref{eq-A} and
whose last column is $e_{1}=[1,0,\dots,0]^{T}$. Under isomorphism $\flat$, the
null space of $A$ corresponds to $\operatorname{syz}_{d}(\mathbf{a})$, and the
solutions to $Av=[1,0,\ldots,0]^{T}$ correspond to $\operatorname{Bez}%
_{d}(\mathbf{a})$. From Proposition~\ref{lem-rank}, we know that ${\mathrm{rank}}(A)=2d+1$, and thus all pivotal columns of $W$ are the pivotal columns of
$A$. In step 2, we perform partial Gauss-Jordan operations on $W$ to identify
the coefficients $\alpha$'s appearing in \eqref{eqAij} and \eqref{eq-e1comb},
that express the $n-1$ basic non-pivotal columns of $A$ and the vector $e_{1}$,
respectively, as linear combinations of pivotal columns of $A$. These
coefficients will appear in the basic non-pivotal columns and the last column
of the partial reduced row-echelon form $E$ of $W$. In Step 3, we use these
coefficients to construct a minimal-degree {B\'ezout} vector of $\aa$ and a degree-ordered
$\mu$-basis of $\aa$, as prescribed by Theorems~\ref{thm-bez} and \ref{thm-mu}. We place
these vectors as the columns of matrix $P$, and the resulting matrix is, indeed, a degree-optimal moving frame according to Theorem~\ref{thm-bbomf}.
\end{proof}



\begin{example}
We trace the algorithm on the input vector
\[
\aa=\left[
\begin{array}
[c]{ccc}%
2+s+s^{4} & 3+s^{2}+s^{4} & 6+2s^{3}+s^{4}%
\end{array}
\right]  \in\mathbb{Q}[s]^{3}
\]
\begin{enumerate}
\newcolumntype{C}{>{\small\raggedleft\color{black}\arraybackslash$}p{1.2em}<{$}}
\newcolumntype{G}{>{\small\raggedleft\color{gray}\arraybackslash$}p{1.2em}<{$}}
\newcolumntype{B}{>{\small\raggedleft\color{blue}\arraybackslash$}p{1.2em}<{$}}
\newcolumntype{R}{>{\small\raggedleft\color{red}\arraybackslash$}p{1.6em}<{$}}
\newcolumntype{T}{>{\small\raggedleft\color{brown}\arraybackslash$}p{1.2em}<{$}}

\item \textrm{\emph{Construct matrix $W$}}:

\begin{enumerate}

\item \textrm{$d\longleftarrow4$ }

\item \textrm{$c_{0},c_{1},c_{2},c_{3},c_{4}\longleftarrow\left[
\begin{array}
[c]{ccc}%
2 & 3 & 6
\end{array}
\right]  ,\left[
\begin{array}
[c]{ccc}%
1 & 0 & 0
\end{array}
\right]  ,\left[
\begin{array}
[c]{ccc}%
0 & 1 & 0
\end{array}
\right]  ,\left[
\begin{array}
[c]{ccc}%
0 & 0 & 2
\end{array}
\right]  ,\left[
\begin{array}
[c]{ccc}%
1 & 1 & 1
\end{array}
\right]  $ }

\item $W \longleftarrow\left[
\begin{array}{CCC|CCC|CCC|CCC|CCC|C}
2&3&6&&&&&&&&&&&&&1\\
1&0&0&2&3&6&&&&&&&&&&\\
0&1&0&1&0&0&2&3&6&&&&&&&\\
0&0&2&0&1&0&1&0&0&2&3&6&&&&\\
1&1&1&0&0&2&0&1&0&1&0&0&2&3&6&\\
&&&1&1&1&0&0&2&0&1&0&1&0&0&\\
&&&&&&1&1&1&0&0&2&0&1&0&\\
&&&&&&&&&1&1&1&0&0&2&\\
&&&&&&&&&&&&1&1&1&\end{array}
\right] $
\end{enumerate}

\item \textrm{\emph{Construct the ``partial'' reduced row-echelon form $E$
of $W$.}\newline$E \longleftarrow\left[
\begin{array}{BBB|BBB|BRR|BTT|BTT|G}
1&&&&&&&-3&-9&&&&&&&2\\
&1&&&&&&-2&-8&&&&&&&1\\
&&1&&&&&2&7&&&&&&&-1\\
&&&1&&&&3&12&&3&6&&&&-1\\
&&&&1&&&-5&\tiny-15&&0&0&&3&6&2\\
&&&&&1&&2&5&&1&0&&0&0&-1\\
&&&&&&1&1&1&&0&2&&1&0&0\\
&&&&&&&&&1&1&1&&0&2&0\\
&&&&&&&&&&&&1&1&1&0\end {array}
\right] $ \newline \emph{Here, blue denotes pivotal columns, red denotes basic non-pivotal columns, brown denotes periodic non-pivotal columns, and grey denotes the solution column.}}

\item \textrm{\emph{Construct a matrix $P \in\mathbb{K}[s]^{n\times n}$
whose first column consists of a minimal-degree {B\'ezout} vector of $\mathbf{a}$ and whose
last $n-1$ columns form a $\mu$-basis of $\mathbf{a}$.} }

\begin{enumerate}
\item \textrm{$P \longleftarrow\left[
\begin{array}
[c]{ccc}%
0 & 0 & 0\\
0 & 0 & 0\\
0 & 0 & 0
\end{array}
\right]  $ }

\item \textrm{$P \longleftarrow\left[
\begin{array}
[c]{ccc}%
0 & 0 & 0\\
0 & {s}^{2} & 0\\
0 & 0 & {s}^{2}%
\end{array}
\right]  $ }

\item \textrm{$P\longleftarrow\left[
\begin{array}
[c]{ccc}%
2-s & 3-3\,s-{s}^{2} & 9-12\,s-{s}^{2}\\
1+2\,s & 2+5\,s+{s}^{2} & 8+15\,s\\
-1-s & -2-2\,s & -7-5\,s+{s}^{2}%
\end{array}
\right]  $}
\end{enumerate}
\end{enumerate}
\end{example}


\begin{proposition}
[Theoretical Complexity] \label{prop-complexity} Under the assumption that the time for any arithmetic operation is
constant, the complexity of the OMF algorithm is
\(
O(d^{2}n + d^{3} + n^{2}).
\)

\end{proposition}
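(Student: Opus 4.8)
The plan is to carefully account for the cost of each of the three steps of the OMF algorithm, expressing everything in terms of $d$ and $n$, recalling that the working matrix $W$ has size $(2d+1)\times(nd+n+1)$, so $O(d)$ rows and $O(nd)$ columns.

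First I would analyze Step 1 (constructing $W$): this simply places the $O(d)$ coefficient vectors $c_0,\dots,c_d$, each of length $n$, into $d+1$ diagonal positions, together with the single column $e_1$. Filling in the nonzero entries costs $O(nd)$, but since $W$ is a $(2d+1)\times(nd+n+1)$ array, merely initializing (zeroing) the array costs $O(d\cdot nd)=O(nd^2)$; in either reading this is dominated by later steps, so I would record it as $O(nd^2)$ or note it is absorbed. Next, Step 2 is the dominant computational step: a partial Gauss--Jordan reduction of $W$. The key observation, already exploited by the algorithm's optimizations, is that one does \emph{not} reduce all $O(nd)$ columns — by the Periodicity Lemma (Lemma~\ref{periodic}), once the $n-1$ basic non-pivotal columns are found, all periodic non-pivotal columns can be skipped. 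Since $\rank(A)=2d+1$ by Proposition~\ref{lem-rank}, there are exactly $2d+1$ pivotal columns, plus $n-1$ basic non-pivotal columns, plus the augmented column: so the reduction effectively operates on $O(d+n)$ columns and $O(d)$ rows. Each elimination step (clearing one column below/above a pivot) costs $O(d)$ row-operations, each touching $O(d+n)$ entries, and there are $O(d)$ pivots, giving $O(d\cdot d\cdot(d+n))=O(d^3+d^2n)$. I would verify this count against a standard partial-RREF complexity: reducing an $m\times \ell$ matrix to RREF with $m$ rows costs $O(m^2\ell)$; here $m=O(d)$ and $\ell=O(d+n)$, yielding $O(d^2(d+n))=O(d^3+d^2n)$, matching the claimed bound's first two terms.

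Then I would analyze Step 3 (reading off $P$ from $E$): the loop over $j=2,\dots,n$ in 3(b) does $O(1)$ work each, for $O(n)$ total; the nested loop in 3(c) runs over $i=1,\dots,2d+1$ and inside over $j=2,\dots,n$, with $O(1)$ arithmetic per inner iteration, giving $O(dn)$; computing the quotients/remainders and updating polynomial entries $P_{r,j}$ (which have degree at most $d$) is $O(1)$ per update under the constant-arithmetic assumption, so the total for Step 3 is $O(dn)$. The $O(n^2)$ term in the claimed complexity comes from initializing the $n\times n$ output matrix $P$ with zeros in Step 3(a) (and, depending on bookkeeping, from simply writing down the $n$ columns); so Step 3 is $O(n^2+dn)$. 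Summing the three steps: $O(nd^2)+O(d^3+d^2n)+O(n^2+dn) = O(d^3+d^2n+n^2)$, since $dn\le \tfrac12(d^2+n^2)$ is absorbed. This establishes the stated bound.

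The main obstacle I anticipate is \textbf{carefully justifying the column count in Step 2} — namely that the periodicity optimization genuinely lets us skip all periodic non-pivotal columns, so that only $O(d+n)$ columns ever participate in row operations rather than the full $O(nd)$. This requires invoking Lemma~\ref{periodic} to argue that a periodic non-pivotal column's coefficients are determined by a shift of a basic one and hence need not be computed, and also checking that detecting pivots on-the-fly (to know which columns to skip) does not itself inflate the cost. A secondary subtlety is being precise about the ``constant time per arithmetic operation'' hypothesis: polynomial entries of $P$ have degree up to $d$, so an honest bit-complexity analysis would carry extra factors of $d$, but since the proposition explicitly assumes unit-cost arithmetic, I would simply note that each $P_{r,j}\leftarrow P_{r,j}\pm (\text{scalar})\,s^k$ is treated as $O(1)$. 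Everything else is routine arithmetic on the block dimensions.
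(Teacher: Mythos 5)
Your proposal is correct and follows essentially the same decomposition as the paper: $O(d^2n)$ for building $W$, $O(d^2(d+n))$ for the partial Gauss--Jordan reduction restricted to the $2d+1$ pivotal columns, $n-1$ basic non-pivotal columns, and the augmented column, and $O(n^2+dn)$ for assembling $P$, with the $n^2$ term traced to initializing the output matrix. The extra care you flag about justifying the skipped periodic columns via Lemma~\ref{periodic} is a reasonable refinement but does not change the argument.
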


\begin{proof}
We will trace the theoretical complexity for each step of the algorithm.

\begin{enumerate}
\item
\begin{enumerate}
\item To determine $d$, we scan through each of the $n$ polynomials in $a$ to
identify the highest degree term, which is always $\leq d$. Thus, the
complexity for this step is $O(dn)$.

\item We identify $n(d+1)$ values to make up $c_{0},\ldots,c_{d}$. Thus, the
complexity for this step is $O(dn)$.

\item We construct a matrix with $(2d+1)(nd+n+1)$ entries. Thus, the
complexity for this step is $O(d^{2}n)$.
\end{enumerate}

\item With the partial Gauss-Jordan elimination, we perform row operations
only on the $2d+1$ pivot columns of $A$, the $n-1$ basic non-pivot columns of
$A$, and the augmented column $e_1$. Thus, we perform Gauss-Jordan elimination
on a $(2d+1)\times(2d+n+1)$ matrix. In general, for a $k\times l$ matrix,
Gauss-Jordan elimination has complexity $O(k^{2}l)$. Thus, the complexity for
this step is $O(d^{2}(d+n))$.

\item
\begin{enumerate}
\item We fill 0 into the entries of an $n\times n$ matrix $P$. Thus, the
complexity for this step is $O(n^{2})$.

\item We update entries of the matrix $n-1$ times. Thus, the complexity for
this step is $O(n)$.

\item We update entries of the matrix $(2d+1)(n-1)$ times. Thus, the
complexity for this step is $O(dn)$.
\end{enumerate}
\end{enumerate}

\noindent By summing up, we have
\(
O\left(  dn+dn+d^{2}n+d^{2}(d+n)+n^{2}+n+dn\right)   =O\left(  d^{2}%
n+d^{3}+n^{2} \right)
\)
\end{proof}

\begin{remark}
\label{rem-sparse}\textrm{Note that the $n^{2}$ term in the above complexity
is solely due to step 3(a), where the matrix $P$ is initialized with zeros. If
one uses a \emph{sparse\/} representation of the matrix (storing only nonzero
elements), then one can skip the initialization of the matrix $P$. As a
result, the complexity can be improved to $O\left( d^{2}n+d^{3}\right) $. }
\end{remark}
{It turns out that the theoretical complexity of the OMF algorithm is
exactly the same as that of the $\mu$-basis algorithm presented in \cite{hhk2017}.
This is unsurprising, because the $\mu$-basis algorithm presented
in \cite{hhk2017} is based on partial Gauss-Jordan elimination of matrix $A$,
while the OMF algorithm is based on partial Gauss-Jordan elimination of the matrix obtained
by appending to $A$ a single column $e_{1}$.}

\section{Comparison with other approaches}
\label{sect-compare}
We are not aware of any previously published algorithm for degree-optimal moving frames. Hence, we cannot compare the algorithm OMF\ with any existing algorithms. Instead, we  will compare with a not yet published, but tempting alternative approach. The approach consists of two steps: 
(1) Compute a  moving frame.
(2) Reduce the degree to obtain a degree-optimal moving frame.
We elaborate on this \emph{two-step} approach.
\begin{enumerate}
\item[(1)]\emph{Compute a moving frame.}  A non-optimal moving frame can be computed by a variety of methods, and in particular in the process  of computing normal forms of polynomial matrices, such as     in \cite{beckermann-labahn-villard-1999}, \cite{beckermann-labahn-villard-2006}.
The problem of constructing  an algebraic moving frame is also a particular case of the well-known problem of providing  a constructive proof
of the Quillen-Suslin theorem \cite{fitchas-galligo-1990}, \cite{logar-sturmfels-1992}, \cite{caniglia-1993}, \cite{lombardi-yengui-2005}, \cite{fabianska-quadrat-2007}.  In those papers, the multivariate problem is reduced inductively to the univariate case, and then an algorithm for the univariate case is proposed. Those univariate algorithms produce moving frames. 
As far as we are aware, the produced moving frames are \emph{usually not} degree-optimal.  
However, the  algorithms are very efficient. We will work with one such algorithm
used by Fabianska and Quadrat in \cite{fabianska-quadrat-2007},
because it has the least computational complexity among algorithms of which we are aware. 
Furthermore, the algorithm has been implemented  by the authors in {\sc Maple}, 
and  the package can be obtained from  \url{http://wwwb.math.rwth-aachen.de/QuillenSuslin/}. 
For the readers' convenience, we outline their algorithm (for univariate case)\  below:
 \begin{enumerate}
\item[(a)] Find constants $k_3,\ldots,k_n$ such that $\gcd(a_1+k_3a_3+\cdots+k_na_n,a_2)=1$.
\item[(b)] Find $f_1, f_2 \in \mathbb{K}[s]$ such that 
$(a_1+k_3a_3+\cdots+k_na_n)f_1+a_2f_2=1$.
This can be done by using the Extended Euclidean Algorithm.
\item[(c)] 
$P \longleftarrow 
\left[\begin{array}{ccccc}
1 & & & &\\
& 1 & & &\\
k_3 & & 1 & &\\
\vdots & & & \ddots &\\
k_n & & & & 1
\end{array}\right]
\left[\begin{array}{ccccc} 
f_1 & -a_2 & & &\\
f_2 & a_1' & & &\\
& & 1 & &\\
& & & \ddots &\\
& & & & 1
\end{array}\right]
\left[\begin{array}{ccccc} 
1 & 0 & -a_3 & \cdots & -a_n\\
0 & 1 & & &\\
& & 1 & &\\
& & & \ddots &\\
& & & & 1
\end{array}\right]$,\\
where $a_1' = a_1+k_3a_3+\cdots+k_na_n$.
\end{enumerate}
We remark that Step (a) of this algorithm can be completed with a random search. Moreover, for random inputs, $\gcd(a_1,a_2)=1$ and one can take each $k_i =0$. The complexity of this algorithm is $O(d^2+n^3)$, where $d^2$ comes from the Extended Euclidean Algorithm and $n^3$ comes from forming the matrix $P$, which is much better than the complexity of the OMF algorithm. We note, however, that the output of the Fabianska-Quadrat algorithm has degree at least $d$, while the output of the OMF algorithm has degree at most $d$ and generically   $\left\lceil\frac{d}{n-1}\right\rceil$.    
 
\item[(2)]  \emph{Reduce the degree to obtain a degree-optimal moving frame.} 
There are several different ways to carry out degree reduction: 
Popov form (\cite{beckermann-labahn-villard-1999}, \cite{beckermann-labahn-villard-2006}),  column reduced form \cite{cheng-labahn-2007}
and  matrix GCD \cite{beckermann-labahn-2000}.
  As far as we are aware, the Popov form algorithm  \cite{beckermann-labahn-villard-2006} is the only one with a publicly available Maple implementation.  Thus, we will use it for comparison. We explain  how to use Popov form to reduce the degree.  

\begin{enumerate}
\item[(a)] Compute the Popov normal form of the last $n-1$ columns of a non-optimal moving frame
$P$.
\item[(b)] Reduce the degree of the first  column of $P$ (a \bez vector) by the  Popov normal form of the last $n-1$ columns.
\end{enumerate}  
\end{enumerate}

\noindent We  compared the computing times of 
the algorithm OMF and the alternative two-step approach. Both algorithms are implemented in Maple (2016) and  {\Blue were executed  on Apple iMac  (Intel i 7-2600, 3.4 GHz, 16GB).} 
The inputs polynomial vectors were generated as follows. The coefficients were  randomly taken  from  $[-10,10]$.   The degrees~$d$ of the polynomials  ranged from $3$ to $15$. The  length~$n$ of the vectors also ranged from $3$ to $15$.  

Figure~\ref{fig-timing} shows the timings.
\begin{figure}[h!] \centering
\includegraphics[scale=0.40]{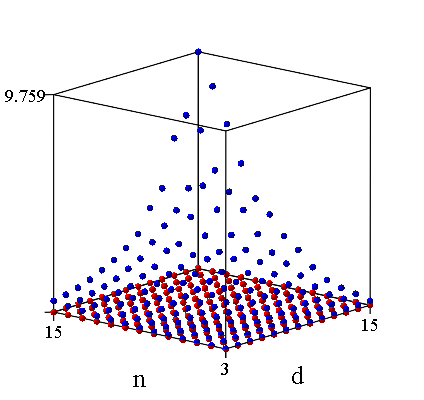} 
\caption{Timing comparison: {\red OMF} vs.~{\blue Two-step approach}}
\label{fig-timing}
\end{figure}
The horizontal axes correspond to $n$ and $d$
and the vertical axis corresponds to computing time $t$ in seconds.
Each dot  $(d,n,t)$ represents an  experimental timing.     The {\red red} dots indicate the experimental timing of the algorithm OMF, while the {\blue blue} dots indicate the experimental timing of the two-step approach described above.

As can be seen, the  algorithm OMF runs  significantly more efficiently. This is due primarily to the cost of computing the Popov form of the last $n-1$ columns of the non-optimal moving frame. As described in \cite{beckermann-labahn-villard-2006}, the complexity of this step is $O(d^3n^7)$, which is bigger than $O(d^{2}n + d^{3} + n^{2})$, the complexity of the OMF (Proposition \ref{prop-complexity}).
Although other algorithms and implementations for Popov form computations may be more efficient than the one  currently implemented  in Maple, we still expect OMF to significantly outperform any  similar two-step procedure, because the degree reduction step is essentially similar to  a TOP reduced Gr\"obner basis computation for a module, which is computationally expensive.



\section{Geometric interpretation and equivariance}
\label{sect-geom-mf}
In the introduction, we justified the term moving frame by picturing  it as a coordinate system moving along a curve.  This point of view is reminiscent  of classical geometric frames, such as the Frenet-Serret frame.  However, the frames in this paper were defined by suitable algebraic properties,  not its  geometric properties. 
It is then natural to ask if it  is possible to combine  algebraic properties of Definition~\ref{def-omf} with some essential geometric properties, in particular with the group-equivariance property.   In this section, we show that any deterministic algorithm for computing an optimal moving frame can be augmented to  obtain an  algorithm that computes a $GL_n(\k)$-equivariant moving frame.

The group-equivariance property is essential for the majority of frames arising in differential geometry. For the Frenet-Serret frame it is manifested as follows. We recall that for a smooth curve $\gamma$ in $\RR^3$,   the Frenet-Serret frame at a point $p \in\gamma$ consists of   the unit tangent vector $T$,  the unit normal vector  $N$  and  the unit binormal  vector $B$ to the curve at $p$. Consider the action of Euclidean group $E(3)$
(consisting of rotations, reflections, and translations) on $\RR^3$. This action induces and action of the curves in $\RR^3$ and on the vectors. It  is easy to see that, for any $g\in E(3)$,  the vectors $g\,T$, $g\,N$ and $g\,B$  are the unit tangent,  the unit normal and  the unit binormal, respectively, at the point $g\,p$   of the curve  $g\,\gamma$. Thus, if we define $F_\gamma(p)=[T, N, P]$, then we  can record  the equivariance property as:
\beq\label{eq-fsf} F_{g\,\gamma}(g\,p)=g\, F_\gamma(p)\quad \text{ for all } \gamma\subset\RR^3, p\in\gamma \text{ and }g\in E(3).\eeq

 In the case of  the algebraic moving frames considered in this paper, we are interested in developing an algorithm that   for $\aa\in \k[s]^n\backslash\{\bf 0\}$  produces an optimal moving frame $P_\aa$  (recall  Definition~\ref{def-omf})  with the additional $GL_n(\k)$-equivariance property:
 \beq\label{eq-eomf} P_{\aa\, g}(s)=g^{-1}\,\mf_\aa(s)  \text{ for all } \aa\in  \k[s]^n\backslash\{{\bf 0}\}, s\in\k \text{ and } g\in GL_n(\k).  \eeq

We observe that on the right-hand side  of \eqref{eq-fsf} the frame is multiplied by $g$, while on the right-hand side  of \eqref{eq-eomf} the frame is multiplied by $g^{-1}$. This means that the columns of $P$ comprise a \emph{right} equivariant moving frame, while  the Frenet-Serret frame is a \emph{left} moving frame (see Definition 3.1 in  \cite{FO99} and the subsequent discussion).  

To give a precise  definition of a $GL_n(\k)$-right-equivariant algebraic moving frame algorithm,  consider  the set $M= \k\times\left(\k[s]^n\backslash\{\bf 0\}\right)$, and   view  an algorithm producing an algebraic moving frame as a map $\rho\colon M\to GL_n(\k)$ such that, for a fixed $\aa$, the matrix $P_\aa(s)=\rho(s,\aa)$ is polynomial in $s$ and satisfies Definition~\ref{def-omf}. Then the $GL_n(\k)$-property  \eqref{eq-eomf} is equivalent to the commutativity of the   following diagram:
 \begin{center}
\begin{tikzcd}
GL_n(\k) \arrow[r, "L_g^{-1}"]  & GL_n(\k)\\
M \arrow[r, "g"] \arrow[u, "\rho"] &  M  \arrow[u, "\rho"]
\end{tikzcd}
\end{center}
On the top  of the diagram, $L_g^{-1}$ indicates the right action of   $g\in GL_n(\k)$ on  $ GL_n(\k)$ defined  by  multiplication from the left by $g^{-1}$, while on the bottom   the  right action is defined by $g\cdot (s,\aa)=  (s,\aa\, g)$.

We observe further that if  the columns of $P$ comprise a {right} equivariant moving frame, then the rows of  $P^{-1}$ comprise a left frame. The  inverse  algebraic frame has an easy geometric interpretation: the first  row of $P^{-1}_\aa$ equals to the position vector $\aa$ and together with the last $n-1$ rows  forms an $n$-dimensional parallelepiped whose volume does not change as the frame moves along the curve.

 It is easy to find an instance of $g$ and $\aa$  to show that  $P_\aa=OMF(\aa)$, where $OMF(\aa)$ is produced  by Algorithm~\ref{alg-OMF}, does not satisfy \eqref{eq-eomf} and, therefore, the OMF algorithm is not a $GL_n(\k)$-equivariant algorithm. However, for input vectors $\aa=[a_1,\dots,a_n]$ such that $a_1,\dots, a_n$ are independent over  $\k$, the  OMF algorithm can be augmented into a  $GL_n(\k)$-equivariant algorithm as follows:

\begin{algorithm}[\textbf{EOMF}]\hfill
\label{alg-eomf}
\begin{description}
\item[\textit{Input:}] $\aa=[a_1,\dots,a_n] \not = 0 \in\mathbb{K}[s]^{n}$, row vector,
where $n>1$, $\gcd(\mathbf{a})=1$, $\mathbb{K}$ a computable field, and components of $\aa$ are linearly independent over $\k$. 

\item[\textit{Output:}] $P\in\mathbb{K}[s]^{n\times n}$, a degree-optimal moving
frame at $\mathbf{a}$
\end{description}

\begin{enumerate}
\item \emph{Construct an $n\times n$ invertible submatrix  of the coefficient matrix of $\aa$.} 

\begin{enumerate}
\item $d\longleftarrow\deg(\mathbf{a})$

\item Identify the row vectors $c_{0}=[c_{01}, \dots c_{0n}],\ldots,c_{d}=
[c_{d1}, \dots c_{dn}]$ such that $\mathbf{a}=c_{0}+c_{1}s+\cdots+c_{d}s^{d}$.

\item $I=[i_1,\dots, i_n]\longleftarrow$ lexicographically smallest vector of integers between $0$ and $d$, such that vectors $c_{i_1},\dots, c_{i_n}$ are independent over $\k$.
\item $\widehat C\longleftarrow
\left[
\begin{array}{c}
  c_{i_1}   \\
  \vdots \\
  c_{i_n}
\end{array}
\right]
$
\end{enumerate}
\item \emph{Compute an optimal moving frame for a canonical representative  of the $GL_n(\k)$-orbit of $\aa$.}
$$\widehat P\longleftarrow OMF(\aa \,\widehat C^{-1})$$
\item \emph{Revise  the moving frame  $\widehat P$ so that the algorithm has the equivariant property \eqref{eq-eomf}}. 
$$\mf \longleftarrow \widehat C^{-1}\widehat P.$$
\end{enumerate}
\end{algorithm}
To prove the algorithm we need the following proposition.
\begin{proposition}\label{prop-equiv} Let $\mf $ be a degree-optimal moving frame at a nonzero polynomial vector $\aa$. Then, for any $g\in GL_n(\k)$, the matrix $g^{-1}\mf$ is a degree-optimal moving frame at the vector $\aa\, g$.
\end{proposition}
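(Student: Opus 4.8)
The plan is to verify the two defining properties of a degree-optimal moving frame (Definition~\ref{def-omf}) directly for the matrix $g^{-1}P$ at the vector $\aa\,g$, transferring each property from $P$ at $\aa$ via the invertible constant matrix $g$. The observation that makes everything work is the trivial identity $(\aa\,g)(g^{-1}P) = \aa\,P$, which immediately shows that $g^{-1}P \in GL_n(\k[s])$ (as a product of invertibles, with constant determinant) and that $(\aa\,g)(g^{-1}P) = [\gcd(\aa),0,\dots,0]$. Since $\gcd(\aa\,g) = \gcd(\aa)$ — because $g$ being an invertible constant matrix means the components of $\aa\,g$ are $\k$-linear combinations of the components of $\aa$ and vice versa, so the two sets of components generate the same ideal in $\k[s]$, hence have the same monic gcd — we conclude that $g^{-1}P$ is a moving frame at $\aa\,g$ in the sense of Definition~\ref{def-mf}.

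Next I would establish optimality using Theorem~\ref{thm-bbomf}, which characterizes degree-optimal moving frames as exactly those whose first column is a minimal-degree B\'ezout vector and whose last $n-1$ columns form a $\mu$-basis. Write $P = [\bb, \uu_1, \dots, \uu_{n-1}]$, so $g^{-1}P = [g^{-1}\bb, g^{-1}\uu_1, \dots, g^{-1}\uu_{n-1}]$. For the B\'ezout column: $(\aa\,g)(g^{-1}\bb) = \aa\,\bb = \gcd(\aa) = \gcd(\aa\,g)$, so $g^{-1}\bb \in \operatorname{Bez}(\aa\,g)$; and multiplication by the constant invertible $g^{-1}$ preserves degree and sets up a degree-preserving bijection $\operatorname{Bez}(\aa) \leftrightarrow \operatorname{Bez}(\aa\,g)$ (with inverse given by multiplication by $g$), so minimality of $\deg(\bb)$ in $\operatorname{Bez}(\aa)$ transfers to minimality of $\deg(g^{-1}\bb)$ in $\operatorname{Bez}(\aa\,g)$. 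For the syzygy columns: $g^{-1}$ gives a degree-preserving $\k[s]$-module isomorphism $\syz(\aa) \to \syz(\aa\,g)$, so $g^{-1}\uu_1, \dots, g^{-1}\uu_{n-1}$ generate $\syz(\aa\,g)$; and since $LV(g^{-1}\uu_i) = g^{-1}\,LV(\uu_i)$ (degree is unchanged under multiplication by a constant matrix, so the leading vector transforms by $g^{-1}$), the independence of $LV(\uu_1),\dots,LV(\uu_{n-1})$ over $\k$ is preserved by the invertible map $g^{-1}$. Hence $g^{-1}\uu_1,\dots,g^{-1}\uu_{n-1}$ is a $\mu$-basis of $\aa\,g$, and the degree-ordering condition~(1) of Definition~\ref{def-omf} is inherited since degrees are unchanged. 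Theorem~\ref{thm-bbomf} then yields that $g^{-1}P$ is degree-optimal at $\aa\,g$.

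The only slightly delicate point is the claim $LV(g^{-1}\hh) = g^{-1}LV(\hh)$, which requires that $\deg(g^{-1}\hh) = \deg(\hh)$; this could conceivably fail if the top-degree coefficient vectors collapsed, but it cannot, precisely because $g$ is invertible: if $LV(\hh) = w \neq 0$ then $g^{-1}w \neq 0$, so the coefficient of $s^{\deg(\hh)}$ in $g^{-1}\hh$ is nonzero and $\deg(g^{-1}\hh) = \deg(\hh)$. I expect this to be the main (and essentially the only) obstacle, and it is dispatched in one line. Everything else is bookkeeping: one identity, $(\aa\,g)(g^{-1}P) = \aa\,P$, propagated through the characterization in Theorem~\ref{thm-bbomf}.
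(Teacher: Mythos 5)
Your proof is correct, and the first half (the identity $(\aa\,g)(g^{-1}P)=\aa\,P$, invertibility of $g^{-1}P$, and $\gcd(\aa\,g)=\gcd(\aa)$) coincides with the paper's. Where you diverge is in establishing degree-optimality: the paper argues directly from Definition~\ref{def-omf} by an exchange argument --- it observes that left-multiplication by a constant matrix can only preserve or decrease column degrees, assumes a better frame $P'$ at $\aa\,g$ exists, and transports it back via $g\,P'$ to contradict the optimality of $P$ at $\aa$. You instead route through the characterization of Theorem~\ref{thm-bbomf}, checking separately that $\hh\mapsto g^{-1}\hh$ is a degree-preserving bijection $\Bez\to\operatorname{Bez}(\aa\,g)$ (so minimality of the first column transfers) and a degree-preserving $\k[s]$-module isomorphism $\syz(\aa)\to\syz(\aa\,g)$ under which leading vectors transform by $g^{-1}$ and hence remain independent (so the $\mu$-basis property transfers). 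The paper's route is shorter and needs only the definition plus the one-sided degree inequality (applied in both directions); yours costs a little more machinery but is more informative --- it exhibits the equivariance at the level of the two building blocks rather than only at the level of the assembled frame, and it correctly isolates and dispatches the one genuine subtlety, namely that $LV(g^{-1}\hh)=g^{-1}LV(\hh)$ because invertibility of $g$ prevents the leading coefficient vector from collapsing. You also explicitly note that the degree-ordering condition of Definition~\ref{def-omf} is inherited, a point the paper leaves implicit. No gaps.
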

\begin{proof}
By definition, $\aa\, \mf=[\gcd(\aa), 0,\dots,0]$ and, therefore,  for any $g\in GL_n(\k)$ we have: 
$$(\aa\,g)\,g^{-1} \mf= [\gcd(\aa), 0,\dots,0].$$
From this, we conclude that $\gcd(\aa\,g)=\gcd(\aa)$ and that $g^{-1} \mf$ is a moving frame at $\aa\,g$.  We note that the rows of the matrix $g^{-1} \mf$ are linear combinations over $\k$ of the rows of the matrix $\mf$. Therefore, the degrees of the columns  of $g^{-1} \mf$ are less than or equal to the degrees of the corresponding columns of $\mf$.

Assume that  $g^{-1} \mf$  is not a degree-optimal moving frame at  $\aa\,g$. Then there exists a moving frame $\mf'$ at $\aa\,g$ such that at least one of the columns of  $\mf'$, say the $j$-th column, has degree strictly less than the $j$-th column of $g^{-1} \mf$. Then, from the paragraph above, the $j$-th column of   $\mf'$ has degree strictly less than the degree of the $j$-th column of $\mf$. 

By the same argument, $g\,\mf'$ is a moving frame at $\aa$ such that its $j$-th column has degree less than or equal to the degree of the $j$-th column of $\mf'$, which is strictly less than the degree of the $j$-th column of $\mf$. This contradicts our assumption that $\mf$ is degree-optimal.
 \end{proof}
\emph{Proof of the Algorithm~\ref{alg-eomf}.}
We first note that, since polynomials $a_1,\dots,a_n$ are linearly independent over $\k$, then  the coefficient matrix $C$ contains $n$ independent rows and, therefore,  Step 1 of the algorithm can be accomplished.
 Let  $\wh\aa=\aa \,\widehat C^{-1}$, then $\aa=\wh\aa\,\wh C$ and $P$ is an optimal moving frame at $\aa$ by Proposition~\ref{prop-equiv}. To show \eqref{eq-eomf},  for an arbitrary input $\aa_1$ and an arbitrary $g\in GL_n(\k)$, let $\aa_2=\aa_1\, g$.  Then $\wh C_{\aa_2}=\wh C_{\aa_1}\,g$ and so
$$ EOMF(\aa_2)= \wh C_{\aa_2}^{-1} OMF(\aa_2 \,\wh C_{\aa_2}^{-1})=g^{-1}\wh C_{\aa_1}^{-1} OMF(\aa_1\,g \,g^{-1}\,\wh C_{\aa_1}^{-1})=g^{-1}\,EOMF(\aa_1).$$
\qed

\begin{remark} It is clear from the above proof that if, in Step~2 of Algorithm~\ref{alg-eomf},  we replace OMF with \emph{any} (not necessarily degree-optimal) algorithm,  then (not necessarily degree-optimal)   frames produced by  Algorithm~\ref{alg-eomf} will have the $GL_n(\k)$-equivariant property \eqref{eq-eomf}.

\end{remark}


\vskip5mm

\section*{Acknowledgments}  
{\Blue  We are grateful to  David Cox for posing the question about the relationship between the degree of the minimal \bez vector and the $\mu$-type which led to Proposition~\ref{mu-type} of this paper; to Teresa Krick for the discussion of the Quillen-Suslin theorem; and to George Labahn for the discussion of the degree-reduction algorithms and the Popov normal form.}

\bibliographystyle{plain}
\bibliography{paper}

\end{document}